\documentclass[reqno,11pt]{article}
\sloppy
\usepackage{a4wide}
\usepackage{color,eucal,enumerate,mathrsfs}
\usepackage[normalem]{ulem}
\usepackage{mdwlist}
\usepackage{tikz-cd}
\usepackage{amsmath}
\usepackage{amssymb,epsfig,bbm}
\numberwithin{equation}{section}
\usepackage{hyperref}
\usepackage{hyphenat}
\usepackage{cite}
\hyphenation{pa-ra-me-ter me-tric mea-su-ra-ble mo-du-les ca-no-ni-cal i-so-me-tric ma-ni-fold}

\usepackage[latin1]{inputenc}

%

\newcommand{\N}{\mathbb{N}}
\newcommand{\Q}{\mathbb{Q}}
\newcommand{\R}{\mathbb{R}}
\newcommand{\Z}{\mathbb{Z}}


\newcommand{\mm}{{\mbox{\boldmath$m$}}}









\newcommand{\sfd}{{\sf d}}
\newcommand{\sfe}{{\sf e}}
\newcommand{\sff}{{\sf f}}
\newcommand{\sfg}{{\sf g}}

\newcommand{\sfC}{{\sf C}}

\newcommand{\sfR}{{\sf R}}
\newcommand{\sfS}{{\sf S}}

\newcommand{\Kliminf}{K\kern-3pt-\kern-2pt\mathop{\rm lim\,inf}\limits}  
\renewcommand{\d}{{\mathrm d}}

\newcommand{\restr}[1]{\lower3pt\hbox{$|_{#1}$}}
\newcommand{\la}{{\langle}}                  
\newcommand{\ra}{{\rangle}}
\newcommand{\eps}{\varepsilon}  
\newcommand{\nchi}{{\raise.3ex\hbox{$\chi$}}}


\setlength{\marginparwidth}{3cm}

\newcommand{\lims}{\varlimsup}

\newcommand{\fr}{\penalty-20\null\hfill$\blacksquare$}                      



\newcommand{\X}{{\rm X}}
\newcommand{\XX}{{\mathbb X}}
\newcommand{\YY}{{\mathbb Y}}
\newcommand{\T}{{\mathbb T}}
\newcommand{\nnorm}{\boldsymbol{\sf n}}

\renewcommand{\mm}{\mathfrak m}                                


\newenvironment{proof}{\removelastskip\par\medskip   
\noindent{\textit{Proof.}}\rm}{\penalty-20\null\hfill$\square$\par\medbreak}

\newtheorem{theorem}{Theorem}[section]

\newtheorem{corollary}[theorem]{Corollary}
\newtheorem{lemma}[theorem]{Lemma}
\newtheorem{proposition}[theorem]{Proposition}

\newtheorem{definition}[theorem]{Definition}

\newtheorem{remark}[theorem]{Remark}

\newcommand{\beq}{\begin{equation}}
\newcommand{\eeq}{\end{equation}}

\linespread{1.15}

\setcounter{tocdepth}{2}

\title{The Serre-Swan theorem for normed modules}
\author{Danka Lu\v{c}i\'{c}\thanks{SISSA, email: \textsf{dlucic@sissa.it}}
\ and Enrico Pasqualetto\thanks{SISSA, email: \textsf{epasqual@sissa.it}}}

\begin{document}
\maketitle
\begin{abstract}
The aim of this note is to analyse the structure of the $L^0$-normed
$L^0$-modules over a metric measure space. These are a tool
that has been introduced by N.\ Gigli to develop a differential calculus
on spaces verifying the Riemannian Curvature Dimension condition.
More precisely, we discuss under which conditions an $L^0$-normed $L^0$-module
can be viewed as the space of sections of a suitable measurable Banach bundle
and in which sense such correspondence
can be actually made into an equivalence of categories.
\bigskip

\textbf{MSC2010:} 13C05, 18F15, 30L99, 51F99.
\end{abstract}
\tableofcontents
\section*{Introduction}
The last few years have seen an increasing interest in the study of the
class of metric measure spaces that satisfy the so-called ${\sf CD}(K,\infty)$ condition
\cite{Lott-Villani07,Sturm06I,Sturm06II}, which provides a synthetic notion of having Ricci
curvature bounded from below by some constant $K\in\R$. 
A reinforcement of such condition, which rules out the Finsler geometries,
has been introduced in \cite{AmbrosioGigliSavare11-2,Gigli12}, where the definition
of ${\sf RCD}(K,\infty)$
space appeared. We refer to the surveys \cite{Villani2016,Villani2017} for an overview
of this topic and a detailed list of references.
\medskip

An important contribution to the vast literature devoted to this subject is given
by the paper \cite{Gigli14}, in which the author proposed a notion of differential 
structure, precisely tailored for the family of $\sf RCD$ spaces. More in detail,
it is possible to develop a first-order calculus on any abstract metric measure space,
while a second-order one can be built only in presence of a lower Ricci curvature bound.
An object that plays a fundamental role in such construction is the concept
of $L^0$-normed $L^0$-module, which we are now going to describe.
\medskip

Let $(\X,\sfd,\mm)$ be any given metric measure space. We denote by $L^0(\mm)$
the space of all real-valued Borel functions on $\X$ (up to $\mm$-a.e.\ equality).
Then an $L^0(\mm)$-normed $L^0(\mm)$-module is an algebraic module $\mathscr M$
over the commutative ring $L^0(\mm)$, endowed with a pointwise norm
operator $|\cdot|:\,\mathscr M\to L^0(\mm)$ that is compatible with the module
structure, in a suitable sense (cf.\ Definition \ref{def:nm} below).
This terminology has been introduced by Gigli in \cite{Gigli14} and further
investigated in \cite{Gigli17}; it represents a variant of the concept of $L^\infty$-module,
due to Weaver \cite{Weaver99,Weaver01}, who was in turn inspired by the works of
Sauvageot \cite{Sauvageot89,Sauvageot90}. An analogue of the $L^0$-normed $L^0$-modules
is given by the `randomly normed spaces', for whose discussion we refer to \cite{HLR91}.
\medskip

A key example of $L^0$-normed $L^0$-module, which actually served as a motivation
for the introduction of this kind of structure, is the space $L^0(TM)$ of
measurable vector fields on a given Riemannian manifold $M$. An important observation
is that the module $L^0(TM)$ consists of the (measurable) sections of a suitable
vector bundle over $M$, more specifically of the tangent bundle $TM$.
With this remark in mind, a natural question arises:
\[\begin{array}{ll}
\text{Is it possible to view any `locally finitely-generated' }L^0\text{-normed}\\
L^0\text{-module as the space of sections of a suitable vector bundle?}
\end{array}\]
The purpose of the present paper is to address such problem. 
First of all, we propose a notion of measurable Banach bundle over $(\X,\sfd,\mm)$
having finite-dimensional fibers; cf.\ Definition \ref{def:MBB}. 
It turns out that the set of measurable sections of any such bundle inherits a natural
structure of $L^0(\mm)$-normed $L^0(\mm)$-module that is proper, meaning that it is
`locally finitely-generated' in the sense of Definition \ref{def:prop_mod}.
Then our main result (i.e.\ Theorem \ref{thm:Serre-Swan}) can be informally stated
in the following way:
\[\begin{array}{ll}
\text{The category of measurable Banach bundles over }(\X,\sfd,\mm)\text{ is}\\
\text{equivalent to that of proper }L^0(\mm)\text{-normed }L^0(\mm)\text{-modules.}
\end{array}\]
It is worth to study the class of proper modules since they actually occur in many
interesting situations -- for instance, the tangent module $L^0(T\X)$ associated to an
${\sf RCD}(K,N)$ space, described in the last paragraph of this introduction,
is always proper. We shall refer to our result as the `Serre-Swan theorem
for normed modules',
the reason being that its statement is fully analogous to that of a classical theorem,
whose two original formulations are due to Serre \cite{Serre55} and Swan \cite{Swan62}.
Among the several versions of this theorem one can find in the literature,
the one that is more similar in spirit to ours is that for smooth Riemannian manifolds
\cite{nestruev2003smooth}. Such result correlates the smooth vector bundles over 
a connected Riemannian manifold $M$ with the finitely-generated projective modules
over the ring $C^{\infty}(M)$. In this regard, some further details will be provided
in Appendix \ref{ap:comparison}, where we will also make a comparison with the present paper.
\medskip

We conclude this introduction by mentioning some special instances of our result
that already appeared in previous works. A structural characterisation of the Hilbert
modules, which are $L^0$-normed $L^0$-modules whose pointwise norm satisfies a
pointwise parallelogram identity, can be found in \cite[Theorem 1.4.11]{Gigli14}.
A refinement of such result for a specific module over finite-dimensional $\sf RCD$
spaces, which we now briefly describe, has been proved in \cite{GP16}.

When working with the differential structure of general metric measure spaces,
an essential role is played by the tangent module $L^0(T\X)$, whose construction
is based upon the well-established theory of the Sobolev spaces $W^{1,2}(\X,\sfd,\mm)$;
we refer to \cite[Definitions 2.2.1/2.3.1]{Gigli14} for its axiomatisation
(actually, the object considered therein is the $L^2(\mm)$-normed $L^\infty(\mm)$-module
$L^2(T\X)$, whose relation with the module $L^0(T\X)$ will be depicted in
Appendix \ref{ap:variant}). Nevertheless, the tangent module $L^0(T\X)$
might give no geometric information about the underlying space $\X$ (e.g.,
if there are no non-constant absolutely continuous curves in $\X$, then the module
$L^0(T\X)$ is trivial). This is not the case when we additionally assume a lower bound
on the Ricci curvature: Mondino and Naber showed in \cite{Mondino-Naber14}
that the rescalings around $\mm$-a.e.\ point $x$ of a finite-dimensional
$\sf RCD$ space $\X$ converge in the pointed-measured-Gromov-Hausdorff topology
to the Euclidean space of dimension $k(x)\leq N$. By `glueing together' these
Euclidean fibers, one obtains the Gromov-Hausdorff
tangent bundle $T_{\rm GH}\X$ (as done in \cite[Section 4]{GP16}),
which has -- a priori -- nothing to do with the
purely analytical tangent module $L^0(T\X)$. However, by relying upon some
rectifiability properties of the $\sf RCD$ spaces \cite{Mondino-Naber14,GP16-2,DPMR16,MK16},
it is possible to prove (cf.\ \cite[Theorem 5.1]{GP16}) that $L^0(T\X)$ is
isometrically isomorphic to the space of measurable sections of $T_{\rm GH}\X$.

\bigskip
\noindent{\bf Acknowledgment}

\noindent The authors would like to acknowledge Nicola Gigli for having pointed out
this problem and for his many invaluable suggestions.
\section{The language of normed modules}
\subsection{\texorpdfstring{$L^0$}{L0}-modules}
Let $\XX=(\X,\sfd,\mm)$ be a \emph{metric measure space}, which for our purposes means that
\begin{equation}\begin{split}
(\X,\sfd)&\quad\text{ is a complete and separable metric space,}\\
\mm\neq 0&\quad\text{ is a non-negative Radon measure on }\X.
\end{split}\end{equation}
The space $\XX$ will remain fixed for the whole paper.
\bigskip

We denote by $L^0(\mm)$ the space of (equivalence classes up to $\mm$-a.e.\ equality of) Borel functions from $\X$ to $\R$.
Such space is a topological vector space when endowed with the complete and separable distance $\sfd_{L^0(\mm)}$, defined by
\begin{equation}
\sfd_{L^0(\mm)}(f,g)\,:=\,\inf_{\delta>0}\Big[\delta+\mm\big(\big\{|f-g|>\delta\big\}\big)\Big]
\quad\text{ for every }f,g\in L^0(\mm),
\end{equation}
which metrizes the convergence in measure; see \cite{Bogachev07} for the details.
From an algebraic point of view, $L^0(\mm)$ has a natural structure of commutative topological ring,
whose multiplicative identity is given by (the equivalence class of) the function identically equal to $1$.

Given any Borel set $A\subseteq\X$, it holds that $(\nchi_{A})$, i.e.\ the ideal of
$L^0(\mm)$ generated by $\nchi_A$, can be naturally identified with $L^0(\mm\restr{A})$
and that the quotient ring $L^0(\mm)/(\nchi_{A})$
is isomorphic to $(\nchi_{\X\setminus A})$.
Moreover, if $(A_i)_{i=1}^n$ is a family of pairwise disjoint Borel subsets of $\X$, then
\begin{equation}\label{eq:direct_sum_vs_union}
(\nchi_A)\cong(\nchi_{A_1})\oplus\ldots\oplus(\nchi_{A_n}),\quad\text{ where we set }
A:=A_1\cup\ldots\cup A_n.
\end{equation}
In particular, it holds that $(\nchi_A)\oplus(\nchi_{\X\setminus A})\cong L^0(\mm)$ for every $A\subseteq\X$ Borel.
\bigskip

We now recall some general terminology about modules over commutative rings: given
any module $M$ over a commutative ring $R$, given a set $S\subseteq M$ and denoted by
$\Pi:\,\bigoplus_{v\in S}R\to M$ the map $(r_v)_{v\in S}\mapsto\sum_{v\in S}r_v\cdot v$,
we say that
\begin{itemize}
\item $S$ \emph{generates} $M$ provided the map $\Pi$ is surjective,
\item $S$ is \emph{independent} provided the map $\Pi$ is injective,
\item $S$ is a \emph{basis} of $M$ provided the map $\Pi$ is bijective.
\end{itemize}
An $R$-module $M$ is \emph{finitely-generated}
provided it is generated by a finite set $\{v_1,\ldots,v_n\}\subseteq M$.
Moreover, any $R$-module $M$ is said to be \emph{free} provided it admits a basis.

We also recall that $M$ is \emph{projective as $R$-module} if it satisfies the
following property: given two modules $N$, $P$ over $R$, a surjective module homomorphism
$f:\,N\to P$ and a module homomorphism $g:\,M\to P$, there exists a module homomorphism
$h:\,M\to N$ such that
\begin{equation}\begin{tikzcd}
& M \arrow[swap]{ld}{h} \arrow{d}{g}\\
N \arrow[swap]{r}{f} & P
\end{tikzcd}\end{equation}
is a commutative diagram. It turns out that $M$ is projective as $R$-module if and only if
there exists a module $Q$ over $R$ such that $M\oplus Q$ is a free $R$-module;
cf.\ \cite[Theorem 3.4]{hungerford2003algebra}.
\bigskip

Hereafter, we shall focus our attention on modules $M$ over the commutative ring $L^0(\mm)$.
We start by fixing some notation: given any Borel subset $A$ of $\X$, let us define
\begin{equation}
M\restr A:=\nchi_A\cdot M=\big\{\nchi_A\cdot v\;\big|\;v\in M\big\}.
\end{equation}
It turns out that $M\restr A$ can be viewed as a module over the ring $L^0(\mm\restr A)\sim(\nchi_A)$.
\begin{definition}
Let $M$ be an $L^0(\mm)$-module. Let $A\subseteq\X$ be a Borel set such that $\mm(A)>0$.
Then some elements $v_1,\ldots,v_n\in M$ form a \emph{local basis} on $A$ if
$\nchi_A\cdot v_1,\ldots,\nchi_A\cdot v_n\in M\restr A$ is a basis
for the $L^0(\mm\restr A)$-module $M\restr A$. In this case, we say that $M$ has \emph{dimension} $n$ on $A$.
Moreover, we say that $M$ has dimension $0$ on $A$ provided $M\restr A=\{0\}$.
\end{definition}

Notice that the previous notion of dimension is well-posed, because any two bases of $M\restr A$ must
have the same cardinality, as a consequence of the fact that $L^0(\mm\restr A)$
is a non-trivial commutative ring; see for instance \cite[Theorem 2.6]{COHN1966215}.
\begin{remark}{\rm
Any $L^0(\mm)$-module $M$ inherits a natural structure of $\R$-linear space,
as granted by the fact that the field $\R$ is (isomorphic to) a subring of $L^0(\mm)$.
\fr}\end{remark}
\begin{definition}[Dimensional decomposition]\label{def:dd}
Let $M$ be an $L^0(\mm)$-module. Then a Borel partition $(E_n)_{n\in\N\cup\{\infty\}}$
of $\X$ is said to be a \emph{dimensional decomposition} of $M$ provided
\begin{itemize}
\item[$\rm i)$] $M$ has dimension $n$ on $E_n$ for every $n\in\N$ with $\mm(E_n)>0$,
\item[$\rm ii)$] $M$ does not admit any finite basis on any Borel set $A\subseteq E_\infty$ with $\mm(A)>0$.
\end{itemize}
The dimensional decomposition, whenever it exists, is unique
up to $\mm$-a.e.\ equality: i.e.\ given any other sequence $(F_n)_{n\in\N\cup\{\infty\}}$
satisfying the same properties it holds that $\mm(E_n\Delta F_n)=0$ for every $n\in\N\cup\{\infty\}$.
\end{definition}
\begin{theorem}\label{thm:dd_fd}
Let $M$ be a finitely-generated $L^0(\mm)$-module. Then $M$ admits a
dimensional decomposition $E_0,\ldots,E_n$. 
\end{theorem}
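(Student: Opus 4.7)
I would proceed by induction on $n$, the number of generators of $M$; the case $n=0$ is immediate with $E_0:=\X$. For the base case $n=1$, write $M=L^0(\mm)\cdot v$ and define $E_0$ as the essential supremum, in the measure algebra of $\mm$, of the family $\mathcal{A}:=\{B\subseteq\X \text{ Borel}:\nchi_B\cdot v=0\}$, and set $E_1:=\X\setminus E_0$. The family $\mathcal{A}$ is closed under finite unions via $\nchi_{B_1\cup B_2}\,v=\nchi_{B_1}\,v+\nchi_{B_2\setminus B_1}\,\nchi_{B_2}\,v$; picking an increasing sequence $(B_m)\subseteq\mathcal{A}$ with $B_m\uparrow E_0$ and using continuity of $L^0(\mm)\times M\to M$ to pass to the limit shows $\nchi_{E_0}v=0$, so $M\restr{E_0}=\{0\}$ (dimension $0$). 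Independence of $\nchi_{E_1}v$ on $E_1$ follows from the reciprocal trick: if $f\,\nchi_{E_1}\,v=0$, multiplying by $\mathrm{sign}(f)(|f|\vee 1/k)^{-1}\nchi_{\{|f|>1/k\}}$ gives $\nchi_{\{|f|>1/k\}\cap E_1}\,v=0$, so $\{|f|>1/k\}\cap E_1\subseteq E_0$, and letting $k\to\infty$ forces $f\equiv 0$ on $E_1$.

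For the inductive step, let $N\subseteq M$ denote the submodule generated by $v_1,\dots,v_{n-1}$. By the inductive hypothesis, $N$ admits a dimensional decomposition $\X=F_0\sqcup\cdots\sqcup F_{n-1}$, with local basis $w_1^k,\dots,w_k^k$ of $N\restr{F_k}$ on each $F_k$. On each $F_k$ I would apply the same kind of construction as in the base case to the element $\nchi_{F_k}v_n$ relative to $N\restr{F_k}$: setting
\begin{equation*}
F_k^{(0)}\,:=\,\text{essential union of }\bigl\{B\subseteq F_k\text{ Borel}:\nchi_B\,v_n\in N\restr B\bigr\},\qquad F_k^{(1)}\,:=\,F_k\setminus F_k^{(0)},
\end{equation*}
where closure of the defining family under finite unions relies on the uniqueness of basis expansions with respect to $(w_j^k)$ inside $N\restr{F_k}$. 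On $F_k^{(0)}$ the tuple $(w_j^k)_{j=1}^{k}$ is still a local basis of $M\restr{F_k^{(0)}}$, giving dimension $k$. On $F_k^{(1)}$, the enlarged family $(w_1^k,\dots,w_k^k,\nchi_{F_k^{(1)}}v_n)$ is a local basis of $M\restr{F_k^{(1)}}$: generation is clear; independence follows because a nontrivial relation $f_0\,\nchi_{F_k^{(1)}}v_n+\sum_j f_j w_j^k=0$ with $f_0\ne 0$ on some positive-measure $A\subseteq F_k^{(1)}$ would yield $\nchi_A\,v_n\in N\restr A$, contradicting the maximality of $F_k^{(0)}$; hence $f_0\equiv 0$, and then $f_j\equiv 0$ by independence of $(w_j^k)$. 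Finally, setting $E_k:=F_k^{(0)}\sqcup F_{k-1}^{(1)}$ for $k=0,\dots,n$ (with the convention $F_{-1}^{(1)}:=F_n^{(0)}:=\emptyset$) produces the desired dimensional decomposition, since $\bigsqcup_k E_k=\bigsqcup_k F_k=\X$ and $M$ has dimension exactly $k$ on $E_k$.

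The main technical obstacle is the well-posedness of the essential unions defining $E_0$ and the $F_k^{(0)}$. One must show that the relevant families are closed not merely under finite unions but under \emph{countable} increasing unions, so that the measure-algebraic supremum is itself attained inside the family. This reduces to passing to the limit $\nchi_{B_m}v\to\nchi_{\bigcup_m B_m}v$ as $B_m\uparrow B$, and ultimately rests on continuity of the scalar multiplication $L^0(\mm)\times M\to M$ with respect to convergence in measure—a property intrinsic to the $L^0$-normed module framework that underlies the paper, and without which the statement genuinely fails for an abstract $L^0(\mm)$-module.
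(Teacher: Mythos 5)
First, be aware that the paper does not prove this theorem at all: it is imported verbatim from \cite[Theorem 1.1]{GUO2011833}, so there is no internal argument to compare yours with. That said, your overall scheme --- induction on the number of generators, the essential union of $\{B:\nchi_B\cdot v=0\}$ in the cyclic case, the reciprocal trick for independence, and the splitting of each $F_k$ according to whether $\nchi_B\cdot v_n$ falls into $N\restr B$ --- is the natural route, and it is sound \emph{provided} the module has the locality property.

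That proviso is exactly where the genuine gap lies, and you have put your finger on it yourself in your last paragraph. Theorem \ref{thm:dd_fd} is stated for an arbitrary \emph{algebraic} finitely-generated $L^0(\mm)$-module: there is no topology, so ``continuity of $L^0(\mm)\times M\to M$'' is not available, and assuming locality would be circular within the paper's architecture, since Remark \ref{rmk:dd_fg_mod} \emph{deduces} locality and glueing from this very theorem. Moreover, your closing suspicion that the statement fails for general algebraic modules is well-founded: with $\mm$ the Lebesgue measure on $\X=[0,1]$, let $I\subseteq L^0(\mm)$ be the ideal of those functions vanishing $\mm$-a.e.\ on $[0,\eps]$ for some $\eps>0$; the cyclic module $M:=L^0(\mm)/I$ satisfies $M\restr{[\eps,1]}=\{0\}$ for every $\eps>0$ while $M\neq\{0\}$, so locality fails for the generator $1+I$, and one checks that $M$ has dimension $1$ on no set of positive $\mm$-measure while any candidate $E_\infty$ must contain positive-measure sets on which $M$ vanishes; hence no dimensional decomposition exists. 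Consequently your argument cannot be completed into a proof of the theorem as stated. What it does prove is the theorem under the additional hypothesis that $M$ has the locality property --- which covers every use the paper makes of the result, since the modules of interest there are $L^0(\mm)$-normed --- and the mismatch between the stated hypotheses and what your (or, it seems, any) proof of this kind requires should be resolved by checking the precise hypotheses of \cite{GUO2011833}.
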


The previous result is taken from \cite[Theorem 1.1]{GUO2011833}.
As a consequence, we have that:
\begin{proposition}\label{prop:fin-gen_are_proj}
Let $M$ be a finitely-generated $L^0(\mm)$-module. Then $M$ is projective as a
module over $L^0(\mm)$.
\end{proposition}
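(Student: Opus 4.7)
The plan is to exhibit $M$ as a direct summand of a finitely-generated free $L^0(\mm)$-module, using the dimensional decomposition from Theorem \ref{thm:dd_fd} together with the splitting $L^0(\mm)\cong(\nchi_A)\oplus(\nchi_{\X\setminus A})$ recorded earlier.

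First I would invoke Theorem \ref{thm:dd_fd} to obtain a dimensional decomposition $E_0,E_1,\ldots,E_n$ of $M$; since $M\restr{E_0}=\{0\}$, only the indices $k\in\{1,\ldots,n\}$ matter. For each such $k$ with $\mm(E_k)>0$ choose a local basis $v_{k,1},\ldots,v_{k,k}\in M$ on $E_k$, so that $\nchi_{E_k}v_{k,1},\ldots,\nchi_{E_k}v_{k,k}$ is a basis of the $L^0(\mm\restr{E_k})$-module $M\restr{E_k}$. Next, set $F:=\bigoplus_{k=1}^n L^0(\mm)^k$, a finitely-generated free $L^0(\mm)$-module, and define the $L^0(\mm)$-linear map $\phi\colon F\to M$ by sending the canonical generator indexed by $(k,i)$ to $\nchi_{E_k}v_{k,i}$.

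Step two would be to verify $\phi$ is surjective: given $v\in M$, the partition property gives $v=\sum_{k=1}^n\nchi_{E_k}v$, and since $\nchi_{E_k}v\in M\restr{E_k}$ can be expanded in the basis $\nchi_{E_k}v_{k,i}$ over $L^0(\mm\restr{E_k})\cong(\nchi_{E_k})$, we can lift the coefficients to $L^0(\mm)$ (extending them by zero outside $E_k$) to produce a preimage. Step three is the kernel computation. If $\phi\bigl((f_{k,i})\bigr)=0$, multiplying by $\nchi_{E_j}$ yields $\sum_i(\nchi_{E_j}f_{j,i})\cdot(\nchi_{E_j}v_{j,i})=0$ in $M\restr{E_j}$, whence the linear independence of the local basis on $E_j$ forces $\nchi_{E_j}f_{j,i}=0$ for all $i$, i.e.\ $f_{j,i}\in(\nchi_{\X\setminus E_j})$. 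Hence
\begin{equation*}
\ker\phi \;=\; \bigoplus_{k=1}^n (\nchi_{\X\setminus E_k})^k.
\end{equation*}

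To conclude, I would use the identity $L^0(\mm)\cong(\nchi_{E_k})\oplus(\nchi_{\X\setminus E_k})$ from \eqref{eq:direct_sum_vs_union} to split
\begin{equation*}
F \;\cong\; \bigoplus_{k=1}^n\Bigl[(\nchi_{E_k})^k\oplus(\nchi_{\X\setminus E_k})^k\Bigr]
\;=\; M'\oplus\ker\phi, \qquad M':=\bigoplus_{k=1}^n(\nchi_{E_k})^k.
\end{equation*}
The restriction $\phi\restr{M'}\colon M'\to M$ is injective (since $M'\cap\ker\phi=\{0\}$ inside $F$) and surjective (since any preimage in $F$ may be projected onto $M'$ along $\ker\phi$), hence an isomorphism of $L^0(\mm)$-modules. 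Therefore $F\cong M\oplus\ker\phi$, exhibiting $M$ as a direct summand of a free $L^0(\mm)$-module, so $M$ is projective by the characterisation \cite[Theorem 3.4]{hungerford2003algebra} recalled above.

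The only genuinely delicate step is the kernel computation: one must be careful that the local basis on $E_k$ is only a basis over $L^0(\mm\restr{E_k})$, so the linear independence relation translates into a vanishing statement only on $E_k$, which is exactly what produces the ideal $(\nchi_{\X\setminus E_k})$ in the kernel. Everything else is formal manipulation with direct sums and characteristic functions of the partition.
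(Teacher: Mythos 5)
Your proof is correct and follows essentially the same route as the paper: both use the dimensional decomposition of Theorem \ref{thm:dd_fd} and local bases to identify $M$ with $\bigoplus_{k=1}^n(\nchi_{E_k})^k$, and then complement by $\bigoplus_{k=1}^n(\nchi_{\X\setminus E_k})^k$ to realise $M$ as a direct summand of the free module $\bigoplus_{k=1}^n L^0(\mm)^k$. The only difference is presentational -- you pass through the surjection from the free module and compute its kernel explicitly, whereas the paper directly asserts the isomorphism $M\cong M'$ -- so your kernel computation just supplies detail the paper leaves implicit.
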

\begin{proof}
We know from Theorem \ref{thm:dd_fd} that $M$ admits a dimensional decomposition $E_0,\ldots,E_n$.
For every $k=1,\ldots,n$, let us choose a local basis $v^k_1,\ldots,v^k_k\in M\restr{E_k}$ for $M\restr{E_k}$.
Define $M'$ as the $L^0(\mm)$-module given by
\[M':=\bigoplus_{k=1}^n\underset{k\text{ times}}{\underbrace{(\nchi_{E_k})\oplus\ldots\oplus(\nchi_{E_k})}}.\]
Then we denote by $\Phi:\,M'\to M$ the following map:
\[\Phi\big((f^k_i)_{1\leq i\leq k\leq n}\big):=\sum_{k=1}^n\sum_{i=1}^k f^k_i\cdot v^k_i
\quad\text{ for every }(f^k_i)_{1\leq i\leq k\leq n}=(f^1_1,f^2_1,f^2_2,\ldots,f^n_1,\ldots,f^n_n)\in M'.\]
Hence $\Phi$ is a module isomorphism, so that it suffices to prove that $M'$ is projective. Call
\[Q:=\bigoplus_{k=1}^n\underset{k\text{ times}}{\underbrace{(\nchi_{\X\setminus E_k})
\oplus\ldots\oplus(\nchi_{\X\setminus E_k})}}.\]
It follows from \eqref{eq:direct_sum_vs_union} that $M'\oplus Q\cong\bigoplus_{k=1}^n L^0(\mm)^k$, which is
a free $L^0(\mm)$-module. Therefore one has that $M'$ (and accordingly also $M$) is projective, as required.
\end{proof}

We would also like to build a dimensional decomposition
on $L^0(\mm)$-modules that are not necessarily finitely-generated. To do so,
we need to endow such modules with some additional topological
structures. For this reason, we introduce the following definitions:
\begin{definition}[Locality/glueing]
Let $M$ be an $L^0(\mm)$-module. Then we say that
\begin{itemize}
\item[$\rm i)$] $M$ has the \emph{locality property} if for any $v\in M$ and any sequence $(A_n)_{n\in\N}$
of Borel subsets of $\X$ it holds that
\begin{equation}
\nchi_{A_n}\cdot v=0\;\;\;\text{for every }n\in\N
\quad\Longrightarrow\quad\nchi_{\bigcup_{n\in\N}A_n}\cdot v=0.
\end{equation}
\item[$\rm ii)$] $M$ has the \emph{glueing property} if for any $(v_n)_{n\in\N}\subseteq M$ and any
sequence $(A_n)_{n\in\N}$ of pairwise disjoint Borel subsets of $\X$ there exists an element
$v\in M$ such that
\begin{equation}
\nchi_{A_n}\cdot v=\nchi_{A_n}\cdot v_n\quad\text{ for every }n\in\N.
\end{equation}
\end{itemize}
\end{definition}

As one might expect, neither locality nor glueing are in general granted on (algebraic) modules
over the ring $L^0(\mm)$. Counterexamples can be easily built by suitably adapting
the arguments contained in Example 1.2.4 and Example 1.2.5, respectively, of \cite{Gigli14}.
\begin{remark}\label{rmk:dd_fg_mod}{\rm
It directly follows from Theorem \ref{thm:dd_fd} that any finitely-generated
$L^0(\mm)$-module has both the locality property and the glueing property.
\fr}\end{remark}
\begin{theorem}\label{thm:dd}
Let $M$ be an $L^0(\mm)$-module with the locality property and the glueing property.
Then $M$ admits a dimensional decomposition $(E_n)_{n\in\N\cup\{\infty\}}$.
\end{theorem}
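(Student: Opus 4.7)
The plan is a maximality/greedy argument driven by a single key lemma, which I record as \textbf{(KL)}: if $(A_i)_{i\in\N}$ is a countable pairwise-disjoint family of Borel sets on each of which $M$ has dimension $n$, then $M$ has dimension $n$ on $A:=\bigcup_i A_i$. To prove (KL), for each $i$ I would fix a local basis $v_1^i,\ldots,v_n^i\in M$ on $A_i$, and invoke the glueing property to produce $v_1,\ldots,v_n\in M$ with $\nchi_{A_i}v_j=\nchi_{A_i}v_j^i$ for all $i,j$. \emph{Independence:} a relation $\nchi_A\sum_j f_j v_j=0$, multiplied by $\nchi_{A_i}$, becomes $\sum_j(\nchi_{A_i}f_j)(\nchi_{A_i}v_j^i)=0$ in $M\restr{A_i}$, so $\nchi_{A_i}f_j=0$ by the basis property on $A_i$; since the $A_i$ exhaust $A$, $\nchi_A f_j=0$ by $\sigma$-additivity. \emph{Spanning:} for $w\in M\restr A$, expand $\nchi_{A_i}w=\sum_j g_j^i\,\nchi_{A_i}v_j^i$ with $g_j^i\in L^0(\mm\restr{A_i})$, identify each $g_j^i$ with its extension by zero to an element of $L^0(\mm)$, and paste these into $g_j\in L^0(\mm)$ with $\nchi_{A_i}g_j=g_j^i$ for every $i$. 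Setting $w':=\sum_j g_j v_j$ one checks $\nchi_{A_i}(w-w')=0$ for every $i$, whence $\nchi_A(w-w')=0$ by the locality property, and hence $w=\nchi_A w'$.

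Two elementary facts are then required: (a) if $M$ has dimension $n$ on $A'$, it has dimension $n$ on every Borel $B\subseteq A'$, by restricting the basis; and (b) dimension is well-defined in the sense that $\mm(B)>0$ and $M$ having dimensions $m$ and $n$ on $B$ force $m=n$, by the invariance of basis cardinality over the non-trivial ring $L^0(\mm\restr B)$ cited after the definition of dimension. I would then replace $\mm$ by an equivalent finite measure $\tilde\mm$ (possible since a Radon measure on a separable complete metric space is $\sigma$-finite, and innocuous because $L^0(\tilde\mm)=L^0(\mm)$ and the two measures have the same null sets), so that all suprema below are finite.

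For each $n\in\N$, set
\[s_n:=\sup\bigl\{\tilde\mm(A):A\subseteq\X\text{ Borel},\ M\text{ has dimension }n\text{ on }A\bigr\},\]
pick a maximizing sequence $(A_i^n)_i$, disjointify by $B_i^n:=A_i^n\setminus\bigcup_{k<i}A_k^n$, and let $\tilde E_n:=\bigcup_i B_i^n$. Facts (a) and (KL) give that $M$ has dimension $n$ on $\tilde E_n$, while $\tilde\mm(\tilde E_n)=s_n$ by monotone convergence. By (b), $\tilde\mm(\tilde E_m\cap\tilde E_n)=0$ for $m\neq n$; thus $E_n:=\tilde E_n\setminus\bigcup_{m<n}\tilde E_m$ (for $n\in\N$) and $E_\infty:=\X\setminus\bigcup_{n\in\N}E_n$ form a Borel partition of $\X$ with $M$ still of dimension $n$ on $E_n$ by (a), verifying part (i) of Definition \ref{def:dd}.

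For part (ii), suppose toward contradiction that some Borel $A\subseteq E_\infty$ with $\mm(A)>0$ admits a basis of cardinality $n$. Then $A$ and $\tilde E_n$ are in the class of sets of dimension $n$, so by (a) and (KL) applied to the disjoint pair $(A\setminus\tilde E_n,\tilde E_n)$ the union $A\cup\tilde E_n$ is also in that class; but $\tilde\mm(A\cup\tilde E_n)=s_n+\tilde\mm(A\setminus\tilde E_n)>s_n$ since $A\subseteq E_\infty$ forces $\tilde\mm(A\cap\tilde E_n)=0$, contradicting the definition of $s_n$. The main obstacle I anticipate is the spanning step of (KL): this is the one place where passing from finitely many to countably many pieces forces genuine use of \emph{both} the glueing hypothesis (to produce a single candidate $\sum_j g_j v_j$) and the locality hypothesis (to certify it actually represents $w$ on all of $A$); everything else is routine measure-theoretic maximality combined with the dimensional uniqueness recorded as (b).
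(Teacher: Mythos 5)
Your proof is correct and follows essentially the same route as the paper's: your key lemma (KL) --- glueing local bases across a countable disjoint family and using locality to certify spanning --- is precisely the first half of the paper's argument, and your supremum/maximizing-sequence construction of $\tilde E_n$ is just the explicit form of the $\mm$-essential union of the family $\mathcal F_n=\{A \text{ Borel}: M \text{ has dimension } n \text{ on } A\}$ used there. The only divergence is in part (ii), where you contradict the maximality of $s_n$ directly, whereas the paper runs a descending argument on the number of generators and thereby proves the slightly stronger fact that $M$ is not even finitely generated on any positive-measure subset of $E_\infty$; for Definition \ref{def:dd} as stated, your version suffices.
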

\begin{proof}
Let $n\in\N$ be fixed. We define the family $\mathcal F_n$ as
\[\mathcal F_n:=\big\{A\subseteq\X\text{ Borel}\;\big|\;M\text{ has dimension }n\text{ on }A\big\}.\]
Then we denote by $E_n$ the $\mm$-essential union of the elements of $\mathcal F_n$. Since $\mathcal F_n$
is closed under taking subsets, we can write $E_n$ as $\bigcup_{i\in\N}A_i$ for some
sequence $(A_i)_{i\in\N}\subseteq\mathcal F_n$ of pairwise disjoint sets. For any $i\in\N$, choose
a local basis $v^i_1,\ldots,v^i_n\in M$ for $M\restr{A_i}$ on $A_i$. Hence take those elements $v_1,\ldots,v_n\in M\restr E_n$
satisfying $\nchi_{A_i}\cdot v_j=\nchi_{A_i}\cdot v^i_j$ for all $i\in\N$ and $j=1,\ldots,n$.
We now show that $v_1,\ldots,v_n$ is a local basis for $M$ on $E_n$:
\begin{itemize}
\item Let $v\in M\restr{E_n}$ be arbitrary. For every $i\in\N$, there exist functions
$f^i_1,\ldots,f^i_n\in L^0(\mm\restr{A_i})$ such that $\nchi_{A_i}\cdot v=\sum_{j=1}^n f^i_j\cdot v^i_j$.
Call $f_j:=\sum_{i\in\N}f^i_j\in L^0(\mm\restr{E_n})$ for any $j=1,\ldots,n$.
Since $\nchi_{A_i}\cdot\big(\sum_{j=1}^n f_j\cdot v_j\big)=\sum_{j=1}^n f^i_j\cdot v^i_j=\nchi_{A_i}\cdot v$
is satisfied for every $i\in\N$, it holds that $v=\sum_{j=1}^n f_j\cdot v_j$ by locality property,
proving that $v_1,\ldots,v_n$ generate $M$ on $E_n$.
\item Suppose that $\sum_{j=1}^n f_j\cdot v_j=0$ for some $f_1,\ldots,f_n\in L^0(\mm\restr{E_n})$.
In particular, we have that
$\sum_{j=1}^n(\nchi_{A_i}\,f_j)\cdot v^i_j=\nchi_{A_i}\cdot\big(\sum_{j=1}^n f_j\cdot v_j\big)=0$
for all $i\in\N$, whence $\nchi_{A_i}\,f_j=0$ holds for any $i\in\N$ and $j=1,\ldots,n$.
This grants that $f_1,\ldots,f_n=0$, so that $v_1,\ldots,v_n$ are independent on $E_n$.
\end{itemize}
Therefore $M$ has dimension $n$ on $E_n$. Now let us define $E_\infty:=\X\setminus\bigcup_{n\in\N}E_n$.
It only remains to show item ii). We argue by contradiction: suppose that $M$ is finitely-generated on
some Borel set $A\subseteq E_\infty$ of positive $\mm$-measure. Let $v_1,\ldots,v_n$ be
generators of $M\restr A$. Since $A\cap E_n=\emptyset$, the elements $v_1,\ldots,v_n$ cannot
form a basis for $M\restr A$, then there exist $i\in\{1,\ldots,n\}$ and $A_1\subseteq A$ Borel with
$\mm(A_1)>0$ such that $\nchi_{A_1}\cdot v_i$ can be written as an $L^0(\mm\restr{A_1})$-linear combination
of the $(\nchi_{A_1}\cdot v_j)$'s with $j\neq i$. Given that $A_1\cap E_{n-1}=\emptyset$,
we have that $\{v_j\,:\,j\neq i\}$ cannot be a basis for $M\restr{A_1}$, and so on.
By repeating the same argument finitely many times, we finally obtain a Borel set $A_n\subseteq A_1$
that is not $\mm$-negligible and that satisfies $M\restr{A_n}=\{0\}$. This cannot hold because $A\cap E_0=\emptyset$,
thus leading to a contradiction. This proves ii).
\end{proof}

In the sequel, we shall mainly focus on the following class of $L^0(\mm)$-modules,
which strictly contains all the finitely-generated $L^0(\mm)$-modules.
\begin{definition}[Proper modules]\label{def:prop_mod}
Let $M$ be an $L^0(\mm)$-module having the locality property and the glueing property.
Denote by $(E_n)_{n\in\N\cup\{\infty\}}$ its dimensional decomposition.
Then $M$ is said to be \emph{proper} provided $\mm(E_\infty)=0$.
\end{definition}

The following result shows that in the category of the $L^0(\mm)$-modules having both the
locality and the glueing property, any proper $L^0(\mm)$-module $M$ is projective. Nevertheless, any such module $M$ needs not be projective as $L^0(\mm)$-module.
\begin{proposition}
Let $M$ be a proper $L^0(\mm)$-module.
Consider any two $L^0(\mm)$-modules $N$, $P$ with the locality and the glueing property,
a surjective module homomorphism $f:\,N\to P$ and a module homomorphism $g:\,M\to P$.
Then there exists a module homomorphism $h:\,M\to N$ such that $f\circ h=g$.
\end{proposition}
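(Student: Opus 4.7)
The plan is to build $h$ piece by piece on each stratum of the dimensional decomposition of $M$, and then to glue the pieces together. Since $M$ has the locality and glueing properties and is proper, Theorem \ref{thm:dd} gives a Borel partition $(E_n)_{n\in\N\cup\{\infty\}}$ of $\X$ with $\mm(E_\infty)=0$ such that $M$ has dimension $n$ on $E_n$. For every $n\in\N$ with $\mm(E_n)>0$, I would fix a local basis $v^n_1,\ldots,v^n_n\in M$ for $M\restr{E_n}$, so that $\nchi_{E_n}\cdot v^n_1,\ldots,\nchi_{E_n}\cdot v^n_n$ is a basis of the $L^0(\mm\restr{E_n})$-module $M\restr{E_n}$.

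Next, using the surjectivity of $f$, for each such pair $(n,i)$ pick any $u^n_i\in N$ with $f(u^n_i)=g(\nchi_{E_n}\cdot v^n_i)$, and set $w^n_i:=\nchi_{E_n}\cdot u^n_i\in N\restr{E_n}$. Since $f$ is a module homomorphism, one still has $f(w^n_i)=\nchi_{E_n}\cdot g(v^n_i)=g(\nchi_{E_n}\cdot v^n_i)$. For any $v\in M\restr{E_n}$, the basis property yields a unique decomposition $v=\sum_{i=1}^n f_i\cdot(\nchi_{E_n}\cdot v^n_i)$ with $f_i\in L^0(\mm\restr{E_n})$, and I can define
\begin{equation}
h_n(v):=\sum_{i=1}^n f_i\cdot w^n_i\in N\restr{E_n}.
\end{equation}
A direct check shows that $h_n$ is $L^0(\mm\restr{E_n})$-linear and $f\circ h_n=g\restr{M\restr{E_n}}$.

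To assemble the pieces, given $v\in M$, I apply the glueing property of $N$ to the sequence $(h_n(\nchi_{E_n}\cdot v))_{n\in\N}$ indexed by the pairwise disjoint Borel sets $(E_n)_{n\in\N}$: there exists $h(v)\in N$ such that $\nchi_{E_n}\cdot h(v)=h_n(\nchi_{E_n}\cdot v)$ for every $n\in\N$. The element $h(v)$ is in fact unique, because any two candidates would differ by an element annihilated by every $\nchi_{E_n}$, hence (by locality of $N$ and $\mm(E_\infty)=0$) by $\nchi_\X$, so they coincide. This well-definedness is the one point where I expect to have to be careful, and it is precisely the step in which both properness of $M$ (to cover $\X$ up to an $\mm$-null set by the $E_n$'s) and locality of $N$ come together with the glueing property of $N$.

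Finally, I would verify the two remaining properties by reducing them to identities on each $E_n$. For $L^0(\mm)$-linearity: for $v,v'\in M$ and $\alpha,\alpha'\in L^0(\mm)$, the $L^0(\mm)$-linearity of each $h_n$ yields $\nchi_{E_n}\cdot h(\alpha v+\alpha' v')=\nchi_{E_n}\cdot(\alpha h(v)+\alpha' h(v'))$ for every $n\in\N$, and locality of $N$ together with $\mm(E_\infty)=0$ upgrades this to an equality in $N$. For the identity $f\circ h=g$: using that $f$ is a module homomorphism and that $f\circ h_n=g\restr{M\restr{E_n}}$,
\begin{equation}
\nchi_{E_n}\cdot f(h(v))=f(\nchi_{E_n}\cdot h(v))=f(h_n(\nchi_{E_n}\cdot v))=g(\nchi_{E_n}\cdot v)=\nchi_{E_n}\cdot g(v)
\end{equation}
for every $n\in\N$, so locality of $P$ and $\mm(E_\infty)=0$ give $f(h(v))=g(v)$, which concludes the proof.
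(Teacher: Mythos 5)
Your proof is correct and follows essentially the same route as the paper's: decompose $M$ along its dimensional decomposition, lift the local basis elements through the surjection $f$, define $h$ on each stratum and assemble via the glueing property of $N$, with uniqueness and the identity $f\circ h=g$ both secured by locality (of $N$ and $P$ respectively) together with $\mm(E_\infty)=0$. The only cosmetic difference is that you package the stratum-wise construction into intermediate maps $h_n$ and truncate the lifts by $\nchi_{E_n}$, whereas the paper works directly with the coefficient functions $\lambda^n_i$ concentrated on $E_n$.
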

\begin{proof}
Fix a dimensional decomposition $(E_n)_{n\in\N}$ of $M$. Given any $n\in\N\setminus\{0\}$,
choose a local basis $v^n_1,\ldots,v^n_n\in M\restr{E_n}$ for $M$ on $E_n$.
Since the map $f$ is surjective, for any $n\geq i\geq 1$ there exists $w^n_i\in N$
such that $f(w^n_i)=g(v^n_i)$. Now let $v\in M$ be fixed. Then there is a unique
family of functions $(\lambda^n_i)_{n\geq i\geq 1}\subseteq L^0(\mm)$ such that
each $\lambda^n_i$ is concentrated on $E_n$ and
\begin{equation}\label{eq:decomp_pr_mod}
\nchi_{E_n}\cdot v=\sum_{i=1}^n\lambda^n_i\cdot v^n_i\quad\text{ for every }n\in\N\setminus\{0\}.
\end{equation}
Since the module $N$ has the glueing property, we know that there exists an element $h(v)\in N$
such that $\nchi_{E_n}\cdot h(v)=\sum_{i=1}^n\lambda^n_i\cdot w^n_i$ for every $n\geq 1$.
Such element $h(v)$ is also uniquely determined because $N$ has the locality property,
therefore we defined a map $h:\,M\to N$. It follows from its very construction that
$h$ is a homomorphism of $L^0(\mm)$-modules, whence it only remains to prove
that $f\circ h=g$. To this aim, take any $v\in M$ and choose those $(\lambda^n_i)_{n\geq i\geq 1}$
that satisfy \eqref{eq:decomp_pr_mod}. Then for all $n\geq 1$ we have that
\[\begin{split}
\nchi_{E_n}\cdot(f\circ h)(v)&=f\big(\nchi_{E_n}\cdot h(v)\big)=
f\bigg(\sum_{i=1}^n\lambda^n_i\cdot w^n_i\bigg)=
\sum_{i=1}^n\lambda^n_i\cdot f(w^n_i)=
\sum_{i=1}^n\lambda^n_i\cdot g(v^n_i)\\
&=\nchi_{E_n}\cdot g(v),
\end{split}\]
whence accordingly $(f\circ h)(v)=g(v)$ by the locality property of $P$, as required.
\end{proof}
\subsection{\texorpdfstring{$L^0$}{L0}-normed \texorpdfstring{$L^0$}{L0}-modules}\label{subsec:L0_norm_L0_mod}
The notion of $L^0(\mm)$-normed $L^0(\mm)$-module has been introduced in \cite{Gigli14}
and then further investigated in \cite{Gigli17}, with the aim of building a
differential structure over $\XX$. We briefly recall here its definition, taken
from \cite[Definition 1.6]{Gigli17}:
\begin{definition}[$L^0$-normed $L^0$-module]\label{def:nm}
A topological $L^0(\mm)$-module $(\mathscr M,+,\,\cdot\,,\tau)$ is said to be \emph{$L^0(\mm)$-normed}
provided it is endowed with an operator $|\cdot|:\,\mathscr M\to L^0(\mm)$,
called \emph{pointwise norm}, which satisfies the following properties:
\begin{itemize}
\item[$\rm i)$] One has $|v|\geq 0$ and $|f\cdot v|=|f||v|$ in the $\mm$-a.e.\ sense
for every $v\in\mathscr M$ and $f\in L^0(\mm)$.
\item[$\rm ii)$] Given a Borel probability measure $\mm'$ on $\X$ with $\mm\ll\mm'\ll\mm$,
we have that
\begin{equation}\label{eq:dist_on_mod}
\sfd_{\mathscr M}(v,w):=\int|v-w|\wedge 1\,\d\mm'\quad\text{ for every }v,w\in\mathscr M
\end{equation}
is a complete distance on $\mathscr M$ that induces the topology $\tau$.
\end{itemize}
\end{definition}
The particular choice of the measure $\mm'$ could change the distance $\sfd_{\mathscr M}$,
but does not affect neither the completeness of $\sfd_{\mathscr M}$ nor its induced topology $\tau$.
\begin{remark}\label{rmk:operat_L0_cont}{\rm
It follows from the definition of $L^0(\mm)$-normed $L^0(\mm)$-module that the multiplication
by $L^0$-functions $\,\cdot:\,L^0(\mm)\times\mathscr M\to\mathscr M$ and the pointwise
norm $|\cdot|:\,\mathscr M\to L^0(\mm)$ are continuous operators.
\fr}\end{remark}

Fix any two $L^0(\mm)$-normed $L^0(\mm)$-modules $\mathscr M$ and $\mathscr N$.
A \emph{module morphism} $\Phi:\,\mathscr M\to\mathscr N$ is any
$L^0(\mm)$-linear operator such that $\big|\Phi(v)\big|\leq|v|$
holds $\mm$-a.e.\ for every $v\in\mathscr M$. The family
of all module morphisms from $\mathscr M$ to $\mathscr N$
will be denoted by ${\rm Mor}(\mathscr M,\mathscr N)$.
\begin{lemma}
Let $\mathscr M$ be an $L^0(\mm)$-normed $L^0(\mm)$-module. Then $\mathscr M$
has both the locality property and the glueing property.
\end{lemma}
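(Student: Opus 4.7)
The plan is to show first that an element $w\in\mathscr M$ is zero if and only if its pointwise norm $|w|$ vanishes $\mm$-a.e. Indeed, from $|0\cdot w|=|0||w|=0$ one gets $|0|=0$, and then by formula \eqref{eq:dist_on_mod} one has $\sfd_{\mathscr M}(w,0)=\int|w|\wedge 1\,\d\mm'$, which is zero precisely when $|w|=0$ $\mm$-a.e. This reduces both properties to statements about the pointwise norm.

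For the \textbf{locality property}, I would fix $v\in\mathscr M$ and $(A_n)_{n\in\N}$ Borel with $\nchi_{A_n}\cdot v=0$ for every $n$. Applying the pointwise norm yields $\nchi_{A_n}|v|=|\nchi_{A_n}\cdot v|=0$ $\mm$-a.e., hence $|v|=0$ $\mm$-a.e.\ on every $A_n$, and therefore also on $A:=\bigcup_n A_n$. Consequently $|\nchi_A\cdot v|=\nchi_A|v|=0$ $\mm$-a.e., which by the preliminary observation forces $\nchi_A\cdot v=0$.

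For the \textbf{glueing property}, given disjoint $(A_n)$ and elements $(v_n)$, the natural candidate is $v:=\sum_n\nchi_{A_n}\cdot v_n$, where convergence has to be established in $\mathscr M$. I would consider the partial sums $w_N:=\sum_{n=1}^N\nchi_{A_n}\cdot v_n$; using disjointness of the $A_n$, for $N>M$ one has $|w_N-w_M|=\sum_{n=M+1}^N\nchi_{A_n}|v_n|$, so that
\begin{equation}
\sfd_{\mathscr M}(w_N,w_M)=\int\bigg(\sum_{n=M+1}^N\nchi_{A_n}|v_n|\bigg)\wedge 1\,\d\mm'
\leq\int\sum_{n=M+1}^\infty\nchi_{A_n}\,\d\mm'.
\end{equation}
The last integrand is dominated by $\nchi_{\bigcup_{n>M}A_n}\le 1\in L^1(\mm')$ and converges pointwise to $0$ as $M\to\infty$, so dominated convergence yields that $(w_N)$ is $\sfd_{\mathscr M}$-Cauchy and hence converges to some $v\in\mathscr M$ by completeness. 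To finish, I use the continuity of multiplication by $L^0$-functions (Remark \ref{rmk:operat_L0_cont}): since for every $N\ge n$ we have $\nchi_{A_n}\cdot w_N=\nchi_{A_n}\cdot v_n$ by disjointness, letting $N\to\infty$ gives $\nchi_{A_n}\cdot v=\nchi_{A_n}\cdot v_n$, as required.

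The only delicate point is the Cauchy estimate in the glueing step; the disjointness of the $A_n$ is essential there to keep the partial sums controlled, since without it one could not bound the tails by the trivial envelope $1$, and a truncation/dominated-convergence argument would become significantly more involved. Everything else is an immediate consequence of the axioms of the pointwise norm and of Definition \ref{def:nm}.
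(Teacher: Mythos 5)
Your proof is correct and follows essentially the same route as the paper's: locality is reduced to the vanishing of the pointwise norm on the union, and glueing is obtained by showing the partial sums $\sum_{k\le n}\nchi_{A_k}\cdot v_k$ are Cauchy via the tail bound $\sum_{k>M}\mm'(A_k)\to 0$ and then passing to the limit using continuity of the multiplication. The only cosmetic difference is that you make the observation ``$w=0$ iff $|w|=0$ $\mm$-a.e.'' explicit and invoke dominated convergence where the paper simply sums the measures of the tails; the substance is identical.
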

\begin{proof}
\textsc{Locality.} Consider any $v\in\mathscr M$ and any sequence $(A_n)_n$ of Borel
subsets of $\X$ such that $\nchi_{A_n}\cdot v=0$ for every $n\in\N$. This means that
$\nchi_{A_n}|v|=|\nchi_{A_n}\cdot v|=0$ holds $\mm$-a.e.\ for any $n\in\N$. Let us
call $A:=\bigcup_n A_n$. Therefore $|\nchi_A\cdot v|=\nchi_A\,|v|\leq\sum_n\nchi_{A_n}|v|=0$
is satisfied $\mm$-a.e.\ in $\X$, showing that $\nchi_A\cdot v=0$. This grants that
$\mathscr M$ has the locality property.\\
\textsc{Glueing.} Let $(v_n)_n\subseteq\mathscr M$ and let $(A_n)_n$ be a sequence
of pairwise disjoint Borel sets in $\X$. Let us define $w_n:=\sum_{k=0}^n\nchi_{A_k}\cdot v_k$
for every $n\in\N$. Given that $\sum_{k=0}^\infty\mm'(A_k)\leq 1$, we know
that $\sum_{k\geq n}\mm'(A_k)\to 0$ as $n\to\infty$, whence accordingly
\[\lims_{n,m\to\infty}\sfd_{\mathscr M}(w_n,w_m)\overset{\eqref{eq:dist_on_mod}}=
\lims_{n,m\to\infty}\sum_{k=n\wedge m+1}^{n\vee m}\int_{A_k}|v_k|\wedge 1\,\d\mm'
\leq\lims_{n,m\to\infty}\sum_{k>n\wedge m}\mm'(A_k)=0,\]
which ensures that the sequence $(w_n)_n$ is $\sfd_{\mathscr M}$-Cauchy.
Call $v\in\mathscr M$ its limit. Given any $k\in\N$, it holds that
$\nchi_{A_k}\cdot w_n=\nchi_{A_k}\cdot v_k$ for all $n\geq k$, whence by passing
to the limit as $n\to\infty$ we get that $\nchi_{A_k}\cdot v=\nchi_{A_k}\cdot v_k$.
This shows that $\mathscr M$ satisfies the glueing property.
\end{proof}
\begin{remark}\label{rmk:prop_mod_separable}{\rm
Any proper $L^0(\mm)$-normed $L^0(\mm)$-module $\mathscr M$ is separable.

Indeed, call $(E_n)_{n\in\N}$ the dimensional decomposition of $\mathscr M$.
For any $n\in\N$, choose a local basis $v^n_1,\ldots,v^n_n\in\mathscr M\restr{E_n}$
for $\mathscr M\restr{E_n}$. Fix a countable dense subset $D$ of $L^0(\mm)$. Then the set
\[\bigg\{\sum_{n=1}^\infty\sum_{i=1}^n f^n_i\cdot v^n_i\;\bigg|\;
(f^n_i)_{1\leq i\leq n}\subseteq D\bigg\}\subseteq\mathscr M,\]
which is countable by construction, is dense in $\mathscr M$ by Remark
\ref{rmk:operat_L0_cont}.
\fr}\end{remark}
\begin{definition}[The category of proper $L^0$-normed $L^0$-modules]\label{def:NMod_pr}
The category having the proper $L^0(\mm)$-normed $L^0(\mm)$-modules as objects
and the module morphisms as arrows is denoted by $\mathbf{NMod}_{\rm pr}(\XX)$.
\end{definition}

A classical reference for the language of categories we shall make us of is given
by \cite{MacLane98}.
\section{The language of measurable Banach bundles}
\subsection{Measurable Banach bundles}
The aim of this section is to propose a notion of measurable Banach bundle, or briefly MBB,
over the given metric measure space ${\mathbb X}=(\X,\sfd,\mm)$.
An alternative definition of MBB, which does not perfectly fit
into our framework, can be found in \cite{GP16}.
\begin{definition}[MBB]\label{def:MBB}
We define a \emph{measurable Banach bundle} over the space $\XX$ as any quadruplet
$\overline{\T}=(T,\underline E,\pi,\nnorm)$, where
\begin{itemize}
\item[$\rm i)$] $\underline E=(E_n)_{n\in\N}$ is a Borel partition of $\X$,
\item[$\rm ii)$] the set $T:=\bigsqcup_{n\in\N}E_n\times\R^n$ is called \emph{total space}
and is always implicitly endowed with the $\sigma$-algebra
$\bigcap_{n\in\N}(\iota_n)_*\mathscr B(E_n\times\R^n)$, where
$\iota_n:\,E_n\times\R^n\hookrightarrow T$ denotes the inclusion map for every $n\in\N$,
\item[$\rm iii)$] the map sending any element $(x,v)\in T$ to its base point $x\in\X$ is denoted by $\pi:\,T\to\X$
and is called \emph{projection map},
\item[$\rm iv)$] $\nnorm:\,T\to[0,+\infty)$ is a measurable function with the property that
for any $n\in\N$ it holds that $\nnorm(x,\cdot)$ is a norm on $\R^n$
for $\mm$-a.e.\ point $x\in E_n$.
\end{itemize}
\end{definition}
Given $n\in\N$ and $x\in E_n$, we say that $(\overline{\T})_x:=\pi^{-1}\{x\}=\{x\}\times\R^n$ is
the \emph{fiber} of $\overline{\T}$ over $x$. We will often implicitly identify the fiber
$(\overline{\T})_x$ with the vector space $\R^n$ itself.
\begin{remark}\label{rmk:charact_sigma-algebra_on_T}{\rm
It is immediate to check that a subset $S$ of the total space $T$ of an MBB $\overline{\T}$
is measurable if and only if $S\cap(E_n\times\R^n)$ is a Borel subset of $E_n\times\R^n$ for any $n\in\N$.
\fr}\end{remark}

We now describe which are the (pre-)morphisms between any two given MBB's.
\begin{definition}[MBB pre-morphisms]
Let $\overline{\T}_1=(T_1,{\underline E}^1,\pi_1,\nnorm_1)$,
$\overline{\T}_2=(T_2,{\underline E}^2,\pi_2,\nnorm_2)$ be
MBB's over $\XX$. Then a measurable map $\overline\varphi:\,T_1\to T_2$ is said to
be an \emph{MBB pre-morphism} from $\overline{\T}_1$ to $\overline{\T}_2$ provided the diagram
\begin{equation}\begin{tikzcd}
T_1 \arrow{r}{\overline\varphi} \arrow[swap]{rd}{\pi_1} & T_2 \arrow{d}{\pi_2} \\
 & \X
\end{tikzcd}\end{equation}
is commutative and for $\mm$-a.e.\ $x\in\X$ it holds that
$\overline\varphi\restr{(\overline{\T}_1)_x}:\,\big((\overline{\T}_1)_x,\nnorm_1(x,\cdot)\big)\to\big((\overline{\T}_2)_x,\nnorm_2(x,\cdot)\big)$ is a linear $1$-Lipschitz map.
\end{definition}

We declare two MBB pre-morphisms $\overline\varphi,\overline\varphi':\,T_1\to T_2$ to be equivalent,
briefly $\overline\varphi\sim\overline\varphi'$, if
\begin{equation}\label{eq:equiv_pre-MBB_morphisms}
\overline\varphi\restr{(\overline{\T}_1)_x}=\overline\varphi'\restr{(\overline{\T}_1)_x}
\quad\text{ holds for }\mm\text{-a.e.\ }x\in\X.
\end{equation}
We are now finally in a position to define the category of measurable Banach bundles over $\XX$.
\begin{definition}[The category of MBB's]
The collection of measurable Banach bundles over $\XX$ and of
equivalence classes of MBB pre-morphisms form a category, which we shall denote by $\mathbf{MBB}(\XX)$.
\end{definition}
\subsection{The section functor}
Once a notion of measurable Banach bundle is given, it is natural to consider its
`measurable sections', namely those maps which assign (in a measurable way) to almost
every point of the underlying metric measure space an element of the fiber over such point.

It will turn out that the space $\Gamma(\T)$ of all measurable sections of a measurable Banach
bundle $\T$ is a proper $L^0$-normed $L^0$-module. The correspondence $\T\mapsto\Gamma(\T)$
can be made into a functor, called `section functor', from the category of measurable Banach
bundles to the category of proper $L^0$-normed $L^0$-modules.
\begin{definition}[Sections of an MBB]
Let $\overline\T=(T,\underline E,\pi,\nnorm)$ be (a representative of) an MBB over $\XX$.
Then we call \emph{(measurable) section} of $\overline\T$ any measurable right inverse of
the projection $\pi$, i.e.\ any measurable map $\overline s:\,\X\to T$ such that
$\pi\circ\overline s={\rm id}_\X$.
\end{definition}

Two given sections $\overline s_1,\overline s_2:\,\X\to T$ are equivalent provided
$\overline s_1(x)=\overline s_2(x)$ for $\mm$-a.e.\ $x\in\X$. The space of all
equivalence classes of sections of $\overline\T$ will be denoted by $\Gamma(\overline\T)$.
We add some structure to the set $\Gamma(\overline\T)$,
in order to get an $L^0(\mm)$-normed $L^0(\mm)$-module:
\begin{itemize}
\item[$\rm i)$]\textsc{Vector space.} Let $s_1,s_2\in\Gamma(\overline\T)$ and $\lambda\in\R$.
Pick a representative
$\overline s_i:\,\X\to T$ of $s_i$ for each $i=1,2$. Then we can pointwise define the
sections $\overline s_1+\overline s_2$ and $\lambda\,\overline s_1$ of $\overline\T$ as
\begin{equation}\begin{split}
(\overline s_1+\overline s_2)(x)&:=\overline s_1(x)+\overline s_2(x)\\
(\lambda\,\overline s_1)(x)&:=\lambda\,\overline s_1(x)
\end{split}\quad\text{ for every }x\in\X.
\end{equation}
Therefore we define $s_1+s_2\in\Gamma(\overline\T)$ and $\lambda\,s_1\in\Gamma(\overline\T)$ as the equivalence classes
of $\overline s_1+\overline s_2$ and $\lambda\,\overline s_1$, respectively. It can be readily
seen that these operations are well-defined and give to $\Gamma(\overline\T)$ a vector space structure.
\item[$\rm ii)$]\textsc{Multiplication by $L^0$-functions.} Fix $s\in\Gamma(\overline\T)$ and $f\in L^0(\mm)$.
Choose a representative $\overline s:\,\X\to T$ of
$s$ and a Borel version $\overline f:\,\X\to\R$ of $f$.
Then the map $\overline f\cdot\overline s:\,\X\to T$, which is given by
\begin{equation}
(\overline f\cdot\overline s)(x):=\overline f(x)\,\overline s(x)\in(\overline\T)_x
\quad\text{ for every }x\in\X,
\end{equation}
is a section of $\overline\T$. Hence we define $f\cdot s\in\Gamma(\overline\T)$ as the
equivalence class of $\overline f\cdot\overline s$. This yields a well-posed bilinear
operator $\,\cdot:\,L^0(\mm)\times\Gamma(\overline\T)\to\Gamma(\overline\T)$.
\item[$\rm iii)$]\textsc{Pointwise norm.} Consider a section $s\in\Gamma(\overline\T)$.
Pick a representative
$\overline s:\,\X\to T$ of $s$. Define the Borel function $|\overline s|:\,\X\to[0,+\infty)$ as
\begin{equation}\label{eq:def_ptwse_norm}
|\overline s|(x):=\nnorm\big(\overline s(x)\big)\quad\text{ for every }x\in\X.
\end{equation}
Then we denote by $|s|\in L^0(\mm)$ the equivalence class of the function $|\overline s|$.
This gives us a well-defined operator $|\cdot|:\,\Gamma(\overline\T)\to L^0(\mm)$.
\item[$\rm iv)$]\textsc{Topology on $\Gamma(\overline\T)$.} Pick a Borel probability measure
$\mm'$ on $\X$ such that $\mm\ll\mm'\ll\mm$. Then we define the distance $\sfd_{\Gamma(\overline\T)}$
on $\Gamma(\overline\T)$ as follows:
\begin{equation}\label{eq:def_distance_Gamma(T)}
\sfd_{\Gamma(\overline\T)}(s_1,s_2):=\int|s_1-s_2|\wedge 1\,\d\mm'\quad\text{ for every }s_1,s_2\in\Gamma(\overline\T).
\end{equation}
We denote by $\tau$ the topology induced by $\sfd_{\Gamma(\overline\T)}$.
\end{itemize}
It turns out that $\Gamma(\overline\T)$ is an $L^0(\mm)$-normed $L^0(\mm)$-module.
Furthermore, given any measurable Banach bundle $\T$ over the space $\XX$, we define
\begin{equation}
\Gamma(\T):=\Gamma(\overline\T)\quad\text{ for one (thus any) representative }\overline\T
\text{ of }\T.
\end{equation}
Well-posedness of such definition is granted by the fact that $\Gamma(\overline\T_1)$
and $\Gamma(\overline\T_2)$ are isomorphic as $L^0(\mm)$-normed $L^0(\mm)$-modules
whenever $\overline\T_1$ and $\overline\T_2$ are equivalent bundles.
\begin{remark}[Constant sections]\label{rmk:constant_sections}{\rm
In the forthcoming discussion, a key role will be played by those sections of $\T$
that are obtained in this way: for any $n\in\N$ and any vector $v\in\R^n$, we consider
the section $\boldsymbol v\in\Gamma(\T)$ that is identically equal to $v$ on $E_n$ and
null elsewhere.

More precisely, for any $n\in\N$ and any vector $v\in\R^n$, we define $\boldsymbol v\in\Gamma(\T)$
as the equivalence class of the section $\overline{\boldsymbol v}:\,\X\to T$, given by
\begin{equation}\label{eq:constant_sections_aux}
\overline{\boldsymbol v}(x):=\left\{\begin{array}{ll}
(x,v)\\
(x,0)
\end{array}\quad\begin{array}{ll}
\text{ if }x\in E_n,\\
\text{ if }x\in\X\setminus E_n,
\end{array}\right.
\end{equation}
where $\overline\T=(T,\underline E,\pi,\nnorm)$ is any chosen representative of $\T$.
\fr}\end{remark}
\begin{proposition}\label{prop:Gamma(T)_module}
The space $\Gamma(\T)$ is a proper $L^0(\mm)$-normed $L^0(\mm)$-module.
More precisely, for any representative $\overline\T=(T,\underline E,\pi,\nnorm)$ of the bundle $\T$
it holds that $\underline E=(E_n)_{n\in\N}$ constitutes a dimensional decomposition of $\Gamma(\T)$.
\end{proposition}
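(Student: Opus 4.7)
The plan is to verify in sequence: (a) the operations and the pointwise norm on $\Gamma(\overline\T)$ are well-defined and satisfy Definition \ref{def:nm}, (b) the distance $\sfd_{\Gamma(\overline\T)}$ is complete, and (c) the partition $\underline E=(E_n)_{n\in\N}$ is a dimensional decomposition of $\Gamma(\T)$, from which properness is immediate. For (a) there is nothing deep: two $\mm$-a.e.\ equal representatives produce $\mm$-a.e.\ equal pointwise sums, scalar multiples, $L^0$-products and pointwise norms, so all operations descend to $\Gamma(\overline\T)$; the module axioms hold fiberwise, and $|f\cdot s|=|f|\,|s|$ is the $\mm$-a.e.\ manifestation of $\nnorm(x,\lambda v)=|\lambda|\,\nnorm(x,v)$. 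For (b), given a $\sfd_{\Gamma(\overline\T)}$-Cauchy sequence $(s_k)$, I would extract a subsequence converging pointwise $\mm$-a.e.\ (the distance metrizes convergence in $\mm'$-measure, hence in $\mm$-measure since $\mm\ll\mm'\ll\mm$) and then define $\overline s(x)$ as the pointwise limit where it exists, setting $\overline s(x):=(x,0)$ on the negligible exceptional set. Measurability of $\overline s$ then reduces, via Remark \ref{rmk:charact_sigma-algebra_on_T}, to the observation that on each $E_n$ the map $\overline s_k$ has the form $x\mapsto(x,\sigma_k(x))$ for a Borel $\sigma_k:E_n\to\R^n$, whose pointwise limit is again Borel.

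For (c) I would use the constant sections $\boldsymbol e_1,\ldots,\boldsymbol e_n\in\Gamma(\T)$ from Remark \ref{rmk:constant_sections}, associated with the canonical basis of $\R^n$, as the candidate local basis on $E_n$ whenever $\mm(E_n)>0$. Generating: any $s\in\Gamma(\T)\restr{E_n}$ admits a representative $\overline s$ whose restriction to $E_n$ has the form $x\mapsto(x,\sigma(x))$ for some Borel $\sigma=(\sigma_1,\ldots,\sigma_n):E_n\to\R^n$ (again by Remark \ref{rmk:charact_sigma-algebra_on_T}), so that taking $f_i\in L^0(\mm\restr{E_n})$ to be the equivalence class of $\sigma_i$ yields $s=\sum_{i=1}^n f_i\cdot\boldsymbol e_i$ on $E_n$. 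Independence: if $\sum_{i=1}^n f_i\cdot\boldsymbol e_i=0$ in $\Gamma(\T)\restr{E_n}$, taking the pointwise norm and invoking that $\nnorm(x,\cdot)$ is a genuine norm on $\R^n$ for $\mm$-a.e.\ $x\in E_n$ forces each $f_i$ to vanish $\mm$-a.e.\ on $E_n$. Hence $\Gamma(\T)$ has dimension exactly $n$ on every $E_n$ with $\mm(E_n)>0$.

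Since $\X=\bigsqcup_{n\in\N}E_n$ by the very definition of an MBB, the set $E_\infty$ of Definition \ref{def:dd} is $\mm$-negligible for $\Gamma(\T)$, so properness follows from Definition \ref{def:prop_mod}. The crux of the argument lies in the generating half of step (c): that is precisely where the abstract measurability encoded in the total space $T$ must be converted into concrete Borel vector-valued maps on the strata $E_n$ whose components can be read as $L^0$-scalars. Once that measurability translation is established, it also drives the completeness step (b), and the remaining identities are formal consequences of the pointwise definitions.
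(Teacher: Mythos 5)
Your proposal is correct and, for the core claim, takes essentially the same route as the paper: the constant sections associated with the canonical basis of $\R^n$ form a local basis for $\Gamma(\T)$ on each $E_n$, with the measurability translation via Remark \ref{rmk:charact_sigma-algebra_on_T} doing the real work (the paper reads the vanishing of the coefficients directly off the $\mm$-a.e.\ equality of sections rather than through the pointwise norm, but this is immaterial). Your parts (a) and (b) verify the normed-module axioms and completeness, which the paper asserts without proof just before the proposition; your sketch of these is sound.
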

\begin{proof}
Fix $\overline\T=(T,\underline E,\pi,\nnorm)\in\T$ and $n\in\N$.
Denote by ${\sf e}_1,\ldots,{\sf e}_n$ the canonical basis of $\R^n$. Then consider
the sections $\boldsymbol{\sf e}_1,\ldots,\boldsymbol{\sf e}_n\in\Gamma(\T)$
defined in Remark \ref{rmk:constant_sections}. We claim that
\begin{equation}\label{eq:Gamma(T)_module_claim}
\boldsymbol{\sf e}_1,\ldots,\boldsymbol{\sf e}_n\quad\text{ is a local basis for }\Gamma(\T)\text{ on }E_n.
\end{equation}
Take any $s\in\Gamma(\T)$, with representative $\overline s:\,\X\to T$.
Since the map $\overline s\restr{E_n}:\,E_n\to E_n\times\R^n$ is Borel measurable by
Remark \ref{rmk:charact_sigma-algebra_on_T}, there exists a Borel function
$\overline c=(\overline c_1,\ldots,\overline c_n):\,E_n\to\R^n$ such that
$\overline s(x)=\big(x,\overline c(x)\big)$ holds for every $x\in E_n$.
Now extend each $\overline c_i$ to the whole $\X$ by declaring it equal to $0$ on
the complement of $E_n$. Hence
$\nchi_{E_n}\cdot\overline s=\sum_{i=1}^n\overline c_i\cdot\overline{\boldsymbol{\sf e}}_i$,
where $\overline{\boldsymbol{\sf e}}_1,\ldots,\overline{\boldsymbol{\sf e}}_n$ are defined
as in \eqref{eq:constant_sections_aux}. Calling $c_i\in L^0(\mm)$ the equivalence class
of $\overline c_i$ for every $i=1,\ldots,n$, we deduce that
$\nchi_{E_n}\cdot s=\sum_{i=1}^n c_i\cdot\boldsymbol{\sf e}_i$,
which grants that $\boldsymbol{\sf e}_1,\ldots,\boldsymbol{\sf e}_n$ generate $\Gamma(\T)$ on $E_n$.

Now suppose that $\sum_{i=1}^n c_i\cdot\boldsymbol{\sf e}_i=0$ for some
$c_1,\ldots,c_n\in L^0(\mm)$. Choose a Borel representative $\overline c_i:\,\X\to\R$ of
each $c_i$, whence $\big(\overline c_1(x),\ldots,\overline c_n(x)\big)=
\big(\sum_{i=1}^n\overline c_i\cdot\overline{\boldsymbol{\sf e}}_i\big)(x)=0$
holds for $\mm$-a.e.\ $x\in E_n$, in other words $\nchi_{E_n}c_1,\ldots,\nchi_{E_n}c_n=0$.
Therefore the sections $\boldsymbol{\sf e}_1,\ldots,\boldsymbol{\sf e}_n$ are independent on $E_n$.
This yields \eqref{eq:Gamma(T)_module_claim} and accordingly the statement.
\end{proof}

In order to define the functor $\Gamma$ from $\mathbf{MBB}(\XX)$ to $\mathbf{NMod}_{\rm pr}(\XX)$,
it only remains to declare how it behaves on morphisms, namely to associate to any
MBB morphism $\varphi\in{\rm Mor}(\T_1,\T_2)$ a suitable morphism
$\Gamma(\varphi):\,\Gamma(\T_1)\to\Gamma(\T_2)$ of proper $L^0(\mm)$-normed $L^0(\mm)$-modules.
\bigskip

Let $\overline\T_i=(T_i,\underline E^i,\pi_i,\nnorm_i)$ be representatives of MBB's over $\XX$ for $i=1,2$.
Take a section $\overline s$ of $\overline\T_1$ and a pre-morphism
$\overline\varphi:\,T_1\to T_2$. Since $\overline\varphi\circ\overline s:\,\X\to T_2$ is measurable
as composition of measurable maps and $\pi_2\circ\overline\varphi\circ\overline s=\pi_1\circ\overline s={\rm id}_\X$,
we conclude that $\overline\varphi\circ\overline s$ is a section of $\overline\T_2$.

Now let us call $\T_1$, $\T_2$ and $\varphi$ the equivalence classes of $\overline\T_1$,
$\overline\T_2$ and $\overline\varphi$, respectively. Then we define
$\Gamma(\varphi):\,\Gamma(\T_1)\to\Gamma(\T_2)$ as follows: given any $s\in\Gamma(\T_1)$,
we set
\begin{equation}
\Gamma(\varphi)(s):=\text{the equivalence class of }\overline\varphi\circ\overline s,
\quad\text{ where }\overline s\text{ is any representative of }s.
\end{equation}
In the next result, we shall prove that $\Gamma(\varphi)$ is actually a module morphism.
\begin{lemma}\label{lemma:Gamma_functor}
Let $\T_1$, $\T_2$ be two measurable Banach bundles over $\XX$ and let
$\varphi\in{\rm Mor}(\T_1,\T_2)$. Then $\Gamma(\varphi)\in{\rm Mor}\big(\Gamma(\T_1),\Gamma(\T_2)\big)$.
\end{lemma}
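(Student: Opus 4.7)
The plan is to verify the three things that the claim hides behind one line: (a) that $\Gamma(\varphi)$ is well-defined as a map $\Gamma(\T_1)\to\Gamma(\T_2)$, independently of the chosen representatives $\overline s$, $\overline\varphi$, $\overline\T_i$; (b) that it is $L^0(\mm)$-linear; and (c) that $|\Gamma(\varphi)(s)|\leq|s|$ holds $\mm$-a.e.\ on $\X$ for every $s\in\Gamma(\T_1)$. All three follow from the fact that, for $\mm$-a.e.\ $x\in\X$, the fiber restriction $\overline\varphi_x:=\overline\varphi\restr{(\overline\T_1)_x}$ is linear and $1$-Lipschitz between $\bigl((\overline\T_1)_x,\nnorm_1(x,\cdot)\bigr)$ and $\bigl((\overline\T_2)_x,\nnorm_2(x,\cdot)\bigr)$.

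First I would settle well-posedness. Given another representative $\overline s'$ of $s$, one has $\overline s(x)=\overline s'(x)$ on a full-measure set, hence $\overline\varphi\bigl(\overline s(x)\bigr)=\overline\varphi\bigl(\overline s'(x)\bigr)$ for $\mm$-a.e.\ $x$, so the equivalence class of $\overline\varphi\circ\overline s$ is unchanged. Replacing $\overline\varphi$ by an equivalent pre-morphism $\overline\varphi'$ only changes the fiberwise map on an $\mm$-negligible set of base points by \eqref{eq:equiv_pre-MBB_morphisms}, so again $\overline\varphi\circ\overline s$ and $\overline\varphi'\circ\overline s$ agree $\mm$-a.e.; an analogous remark handles the choice of representatives $\overline\T_i$ of $\T_i$.

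Next I would prove linearity over $L^0(\mm)$. Fix $s_1,s_2\in\Gamma(\T_1)$ and $f_1,f_2\in L^0(\mm)$ with Borel representatives $\overline s_1,\overline s_2,\overline f_1,\overline f_2$. By definition of the module operations on $\Gamma(\T_1)$, a representative of $f_1\cdot s_1+f_2\cdot s_2$ is the map $x\mapsto\overline f_1(x)\,\overline s_1(x)+\overline f_2(x)\,\overline s_2(x)$, all computed in the fiber over $x$. Applying $\overline\varphi$ and invoking fiberwise linearity at $\mm$-a.e.\ $x$ gives
\[
\overline\varphi\bigl(\overline f_1(x)\,\overline s_1(x)+\overline f_2(x)\,\overline s_2(x)\bigr)
=\overline f_1(x)\,\overline\varphi\bigl(\overline s_1(x)\bigr)+\overline f_2(x)\,\overline\varphi\bigl(\overline s_2(x)\bigr),
\]
which, passing to equivalence classes, reads $\Gamma(\varphi)(f_1\cdot s_1+f_2\cdot s_2)=f_1\cdot\Gamma(\varphi)(s_1)+f_2\cdot\Gamma(\varphi)(s_2)$.

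Finally, for the pointwise-norm inequality, take any $s\in\Gamma(\T_1)$ with representative $\overline s$; then by definition \eqref{eq:def_ptwse_norm} of $|\cdot|$, the function $\bigl|\Gamma(\varphi)(s)\bigr|$ admits the Borel representative $x\mapsto\nnorm_2\bigl(\overline\varphi(\overline s(x))\bigr)$. For $\mm$-a.e.\ $x$, the $1$-Lipschitzness of $\overline\varphi_x$ yields
\[
\nnorm_2\bigl(\overline\varphi(\overline s(x))\bigr)=\nnorm_2\bigl(\overline\varphi_x(\overline s(x))-\overline\varphi_x(0)\bigr)\leq\nnorm_1\bigl(\overline s(x)\bigr)=|s|(x),
\]
where we used $\overline\varphi_x(0)=0$ by linearity. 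This gives $\bigl|\Gamma(\varphi)(s)\bigr|\leq|s|$ $\mm$-a.e., completing the verification that $\Gamma(\varphi)\in{\rm Mor}\bigl(\Gamma(\T_1),\Gamma(\T_2)\bigr)$. No step is a genuine obstacle; the only subtlety is keeping the bookkeeping of \textquotedblleft for $\mm$-a.e.\ $x$\textquotedblright\ clean through the three uses of the fiberwise properties of $\overline\varphi$.
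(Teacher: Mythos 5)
Your proposal is correct and follows essentially the same route as the paper's proof: fiberwise linearity of $\overline\varphi$ gives the $L^0(\mm)$-linearity of $\Gamma(\varphi)$, and fiberwise $1$-Lipschitzness (through the origin) gives $\bigl|\Gamma(\varphi)(s)\bigr|\leq|s|$ $\mm$-a.e. The only addition is your explicit well-posedness check, which the paper leaves implicit.
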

\begin{proof}
It suffices to show that for any $s_1,s_2\in\Gamma(\T_1)$ and $f_1,f_2\in L^0(\mm)$ one has
\begin{equation}\label{eq:Gamma(phi)_morphism_aux}\begin{split}
\Gamma(\varphi)(f_1\cdot s_1+f_2\cdot s_2)&
=f_1\cdot\Gamma(\varphi)(s_1)+f_2\cdot\Gamma(\varphi)(s_2),\\
\big|\Gamma(\varphi)(s_1)\big|&\leq|s_1|\quad\mm\text{-a.e.\ in }\X.
\end{split}\end{equation}
Choose representatives $\overline\T_i=(T_i,\underline E^i,\pi_i,\nnorm_i)$ of $\T_i$,
$\overline\varphi:\,T_1\to T_2$ of $\varphi$ and $\overline s_i:\,\X\to T_1$ of $s_i$
for each $i=1,2$. Further, choose Borel functions $\overline f_1,\overline f_2:\,\X\to\R$
that are representatives of $f_1$ and $f_2$, respectively. Hence for $\mm$-a.e.\ point $x\in\X$
it holds that
\[\begin{split}
\big(\overline\varphi\circ(\,\overline f_1\cdot\overline s_1+
\overline f_2\cdot\overline s_2)\big)(x)&=
\overline\varphi\big(\,\overline f_1(x)\,\overline s_1(x)+
\overline f_2(x)\,\overline s_2(x)\big)\\
&=\overline f_1(x)\,(\overline\varphi\circ\overline s_1)(x)+
\overline f_2(x)\,(\overline\varphi\circ\overline s_2)(x),
\end{split}\]
whence $\Gamma(\varphi)(f_1\cdot s_1+f_2\cdot s_2)=f_1\cdot\Gamma(\varphi)(s_1)
+f_2\cdot\Gamma(\varphi)(s_2)$, i.e.\ the first in \eqref{eq:Gamma(phi)_morphism_aux}.

To prove the second one, observe that for $\mm$-a.e.\ $x\in\X$ one has that
\[
|\overline\varphi\circ\overline s_1|(x)=\nnorm_2\big((\overline\varphi\circ\overline s_1)(x)\big)
=(\nnorm_2\circ\overline\varphi)\big(\overline s_1(x)\big)\leq\nnorm_1\big(\overline s_1(x)\big)=|\overline s_1|(x),
\]
so that $\big|\Gamma(\varphi)(s_1)\big|\leq|s_1|$ holds $\mm$-a.e.\ in $\X$. Therefore the thesis is achieved.
\end{proof}
\begin{definition}[Section functor]\label{def:section_functor}
The covariant functor $\Gamma:\,\mathbf{MBB}(\XX)\to\mathbf{NMod}_{\rm pr}(\XX)$, which
associates to any object $\T$ of $\mathbf{MBB}(\XX)$ the object $\Gamma(\T)$ of
$\mathbf{NMod}_{\rm pr}(\XX)$ and to any morphism $\varphi:\,\T_1\to\T_2$ the morphism
$\Gamma(\varphi):\,\Gamma(\T_1)\to\Gamma(\T_2)$, is called \emph{section functor} on $\XX$.
\end{definition}
\section{Main result: the Serre-Swan theorem}
Our main result states that the section functor is actually an equivalence of categories.
We shall refer to such result as the \emph{Serre-Swan theorem for normed modules}.
First, we prove a technical lemma that provides us with a suitable
dense subset of the space of all measurable sections of a measurable Banach bundle.
Then such density result (Lemma \ref{lemma:density_simple_sections}) will be needed
to show that the section functor is `essentially surjective'
(Proposition \ref{prop:Gamma_essentially_surj}) and fully faithful
(Proposition \ref{prop:Gamma_full_faithful}). Finally, the Serre-Swan theorem
(Theorem \ref{thm:Serre-Swan}) will immediately follow.
\bigskip

Given a measurable Banach bundle $\T$ over $\XX$ and any $n\in\N$, we set
\begin{equation}
\sfS(\T,n):=\left\{\sum_{i\in\N}\nchi_{A_i}\cdot\boldsymbol q^i\;\bigg|\;
(A_i)_{i\in\N}\text{ is a Borel partition of }E_n\text{, }(q^i)_{i\in\N}\subseteq\Q^n\right\},
\end{equation}
where the `constant sections' $\boldsymbol q^i\in\Gamma(\T)$ are defined as in Remark
\ref{rmk:constant_sections}. Note that any element of the form
$\sum_{i\in\N}\nchi_{A_i}\cdot\boldsymbol q^i\in\Gamma(\T)$ is well-defined since the sets
$A_i$'s are pairwise disjoint.

Then we define the family $\sfS(\T)\subseteq\Gamma(\T)$ of `simple sections' of $\T$ as follows:
\begin{equation}
\sfS(\T):=\big\{t\in\Gamma(\T)\;\big|\;
\nchi_{E_n}\cdot t\in\sfS(\T,n)\text{ for every }n\in\N\big\}.
\end{equation}
We now show that such class of sections, which is a $\Q$-vector space, is actually dense in $\Gamma(\T)$:
\begin{lemma}\label{lemma:density_simple_sections}
Let $\T$ be a measurable Banach bundle over $\XX$. Then $\sfS(\T)$ is dense in $\Gamma(\T)$.
\end{lemma}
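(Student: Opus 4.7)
The plan is to show that any section $s\in\Gamma(\T)$ can be approximated in the distance $\sfd_{\Gamma(\T)}$ by a simple section obtained, roughly, by discretising the values of $s$ via rational vectors and slicing up $\X$ accordingly. Fix $\eps>0$ and a representative $\overline\T=(T,\underline E,\pi,\nnorm)$ of $\T$. Given $s\in\Gamma(\T)$ with representative $\overline s:\,\X\to T$, we know from Proposition \ref{prop:Gamma(T)_module} (more precisely, the argument carried out therein) that for every $n\in\N$ there exists a Borel map $\overline c_n:\,E_n\to\R^n$ such that $\overline s(x)=\big(x,\overline c_n(x)\big)$ for all $x\in E_n$.

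Next, for each $n\geq 1$ fix an enumeration $\Q^n=\{q^{n,1},q^{n,2},\ldots\}$. For $\mm$-a.e.\ $x\in E_n$ the map $\nnorm(x,\cdot)$ is a norm on $\R^n$, hence continuous, so by density of $\Q^n$ in $\R^n$ there exists some index $i$ with $\nnorm\big(x,\overline c_n(x)-q^{n,i}\big)<\eps$. Define the sets
\begin{equation}
B^{n,i}:=\big\{x\in E_n\;\big|\;\nnorm\big(x,\overline c_n(x)-q^{n,i}\big)<\eps\big\},\qquad
A^{n,i}:=B^{n,i}\setminus\bigcup_{j<i}B^{n,j},
\end{equation}
which are Borel subsets of $E_n$ because $\nnorm$ is measurable and $\overline c_n$ is Borel. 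By the previous observation, $(A^{n,i})_{i\in\N}$ is, modulo an $\mm$-null set, a Borel partition of $E_n$. Putting these partitions together for $n\in\N$ (with the convention that on $E_0$ one has nothing to choose, since $\R^0=\{0\}$), we define $t\in\Gamma(\T)$ via the (automatically measurable) right inverse of $\pi$
\begin{equation}
\overline t(x):=(x,q^{n,i})\quad\text{ for }x\in A^{n,i},\;\; n\geq 1,\;\; i\in\N,
\end{equation}
and $\overline t(x):=(x,0)$ on $E_0$. By construction, $\nchi_{E_n}\cdot t=\sum_{i\in\N}\nchi_{A^{n,i}}\cdot\boldsymbol q^{n,i}\in\sfS(\T,n)$ for every $n\in\N$, so that $t\in\sfS(\T)$.

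It remains to estimate the distance. By the very construction of the $A^{n,i}$'s, we have $\nnorm\big(x,\overline c_n(x)-q^{n,i}\big)<\eps$ for every $x\in A^{n,i}$, and since $\overline s(x)-\overline t(x)=\big(x,\overline c_n(x)-q^{n,i}\big)$ in the fibre, this yields $|s-t|(x)<\eps$ for $\mm$-a.e.\ $x\in\X$. Recalling \eqref{eq:def_distance_Gamma(T)} and that $\mm'$ is a probability measure, we conclude
\begin{equation}
\sfd_{\Gamma(\T)}(s,t)=\int|s-t|\wedge 1\,\d\mm'\leq\eps,
\end{equation}
which proves the density of $\sfS(\T)$ in $\Gamma(\T)$. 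The only subtle point in this argument is ensuring joint Borel-measurability of the thresholding sets $B^{n,i}$ while coping with the $x$-dependence of the norm; this is handled cleanly by combining the measurability of $\nnorm:\,T\to[0,+\infty)$ with the characterisation of the $\sigma$-algebra on $T$ recalled in Remark \ref{rmk:charact_sigma-algebra_on_T}.
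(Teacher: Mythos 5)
Your proof is correct, but it follows a genuinely different route from the paper's. The paper first stratifies each $E_n$ into pieces $E_{n,k}$ on which the fibre norm is controlled by $k$ times the Euclidean norm, approximates the coordinate map of $\overline s$ by a Borel map with finite rational image \emph{in the Euclidean metric}, and then transfers the estimate through the comparison $\nnorm(x,c)\leq k\,|c|$, summing errors of size $\eps/2^{n+k}$ over the strata. You instead work directly with the fibre norms: for each rational vector $q^{n,i}$ you carve out the Borel set where $\nnorm\big(x,\overline c_n(x)-q^{n,i}\big)<\eps$, disjointify, and read off a uniform $\mm$-a.e.\ bound $|s-t|<\eps$. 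This bypasses both the norm-comparison stratification and the Euclidean approximation step, at the price of producing countably many (rather than finitely many per stratum) rational values -- which is still permitted by the definition of $\sfS(\T,n)$. The measurability claims (Borel-ness of $\overline c_n$, of the sets $B^{n,i}$, and of $\overline t$) are justified exactly as you indicate via Remark \ref{rmk:charact_sigma-algebra_on_T}, and the a.e.\ covering of $E_n$ by the $B^{n,i}$'s follows from the continuity of the a.e.\ defined norms $\nnorm(x,\cdot)$. The only cosmetic point is that $(A^{n,i})_{i\in\N}$ is a Borel partition of $E_n$ only up to an $\mm$-null set; to match the definition of $\sfS(\T,n)$ literally you should absorb the leftover null set into, say, $A^{n,1}$.
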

\begin{proof}
Let $s\in\Gamma(\T)$ and $\eps>0$ be fixed. Choose any Borel probability measure $\mm'$
on $\X$ such that $\mm\ll\mm'\ll\mm$ and define the distance $\sfd_{\Gamma(\T)}$ on $\Gamma(\T)$
as in \eqref{eq:def_distance_Gamma(T)}. We aim to find a simple section $t\in\sfS(\T)$
that satisfies $\sfd_{\Gamma(\T)}(s,t)\leq\eps$. To do so, choose representatives
$\overline\T=(T,\underline E,\pi,\nnorm)$ and $\overline s:\,\X\to T$ of $\T$ and $s$,
respectively. We can clearly suppose without loss of generality that $\nnorm(x,\cdot)$ is
a norm for every $x\in\X$. Given any $n\in\N$, let us define
\[E_{n,k}:=\left\{x\in E_n\;\bigg|
\;k-1<\sup_{q\in\Q^n\setminus\{0\}}\frac{\nnorm(x,q)}{|q|}\leq k\right\}
\quad\text{ for every }k\in\N.\]
Since $E_n\ni x\mapsto\nnorm(x,q)/|q|$ is Borel for every $q\in\Q^n\setminus\{0\}$,
we know that each $E_{n,k}$ is Borel. Moreover, the fact that any two norms on $\R^n$ are
equivalent grants that the supremum in the definition of $E_{n,k}$ is finite for
every $x\in E_n$, whence for all $n\in\N$ we have that $(E_{n,k})_{k\in\N}$ constitutes
a Borel partition of $E_n$.
For any $n,k\in\N$, call $\overline s_{n,k}:\,E_{n,k}\to\R^n$ that Borel map for which
$\overline s(x)=\big(x,\overline s_{n,k}(x)\big)$ for every $x\in E_{n,k}$.
It is well-known that there exists a Borel map $\overline t_{n,k}:\,E_{n,k}\to\R^n$ whose
image is a finite subset of $\Q^n$ and satisfying
\begin{equation}\label{eq:density_simple_sections_aux}
\int_{E_{n,k}}\big|k\,\overline s_{n,k}(x)-k\,\overline t_{n,k}(x)\big|\wedge 1\,\d\mm'(x)
\leq\frac{\eps}{2^{n+k}}.
\end{equation}
Given that $\nnorm(x,c)\leq k\,|c|$ holds for every $x\in E_{n,k}$ and $c\in\R^n$,
we deduce from \eqref{eq:density_simple_sections_aux} that
\begin{equation}\label{eq:density_simple_sections_aux2}
\int_{E_{n,k}}\nnorm\big(x,\overline s_{n,k}(x)-\overline t_{n,k}(x)\big)\wedge 1\,\d\mm'(x)
\leq\frac{\eps}{2^{n+k}}.
\end{equation}
Now let us denote by $\overline t:\,\X\to T$ the measurable map such that
$\overline t\restr{E_{n,k}}=({\rm id}_{E_{n,k}},\overline t_{n,k})$ holds for every $n,k\in\N$,
which is meaningful since $(E_{n,k})_{n,k\in\N}$ is a partition of $\X$.
Call $t\in\Gamma(\T)$ the equivalence class of $\overline t$. Notice that
$t\in\sfS(\T)$ by construction. Property \eqref{eq:density_simple_sections_aux2} yields
\[\sfd_{\Gamma(\T)}(s,t)=\int|s-t|\wedge 1\,\d\mm'
=\sum_{n,k\in\N}\int_{E_{n,k}}\nnorm\big(x,\overline s_{n,k}(x)-\overline t_{n,k}(x)\big)\wedge 1\,\d\mm'(x)
\leq\sum_{n,k\in\N}\frac{\eps}{2^{n+k}}=\eps,\]
which gives the thesis.
\end{proof}
\begin{proposition}\label{prop:Gamma_essentially_surj}
Let $\mathscr M$ be a proper $L^0(\mm)$-normed $L^0(\mm)$-module.
Then there exists a measurable Banach bundle $\T$ over $\XX$ such
that $\Gamma(\T)$ is isomorphic to $\mathscr M$.
\end{proposition}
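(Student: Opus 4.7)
The approach is to build the bundle $\T$ directly from the intrinsic data of $\mathscr M$: its dimensional decomposition together with a choice of local bases. First I would use the properness of $\mathscr M$ to obtain the decomposition $(E_n)_{n\in\N}$ with $\mm(E_\infty)=0$ and, on each $E_n$ of positive measure, fix a local basis $v^n_1,\ldots,v^n_n\in\mathscr M\restr{E_n}$. This already prescribes the partition $\underline E$ and the total space $T:=\bigsqcup_n E_n\times\R^n$ of the bundle to be constructed.

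The main task is then to produce the pointwise norm $\nnorm$ on $T$. For each $n$ and each rational vector $q=(q_1,\ldots,q_n)\in\Q^n$, I choose a Borel representative $\bar g_{n,q}\colon E_n\to[0,\infty)$ of the element $|\sum_i q_i\cdot v^n_i|\in L^0(\mm\restr{E_n})$. Because $\Q^n$ is countable, there is an $\mm$-negligible Borel set $N_n\subseteq E_n$ off of which the assignment $q\mapsto\bar g_{n,q}(x)$ simultaneously satisfies all three norm axioms on $\Q^n$: positive-definiteness (from the fact that $v^n_1,\ldots,v^n_n$ are independent on $E_n$), $\Q$-homogeneity, and triangle inequality. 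Triangle plus $\Q$-homogeneity force this function to be Lipschitz on $\Q^n$ with constant $\sum_i\bar g_{n,{\sf e}_i}(x)$, hence it extends uniquely to a norm on $\R^n$. To maintain measurability on $T$, I would define $\nnorm(x,c)$ through a Borel rational approximation $q_k(c)\in\Q^n$ of $c$ (e.g.\ coordinatewise dyadic truncation), so that $\nnorm(x,c)=\lim_k\bar g_{n,q_k(c)}(x)$ exhibits $\nnorm$ as a pointwise limit of Borel functions on $T$, setting $\nnorm(x,c):=|c|$ on the negligible set $\bigcup_n N_n$.

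Once the MBB $\overline\T=(T,\underline E,\pi,\nnorm)$ is in place, the candidate isomorphism $\Phi\colon\mathscr M\to\Gamma(\T)$ is the coordinate map: $v\in\mathscr M$ has, on each $E_n$, a unique decomposition $\nchi_{E_n}\cdot v=\sum_{i=1}^n f^n_i\cdot v^n_i$ with $f^n_i\in L^0(\mm\restr{E_n})$, and $\Phi(v)$ is the equivalence class of the section $x\mapsto(x,(\bar f^n_1(x),\ldots,\bar f^n_n(x)))$ on each $E_n$. The $L^0(\mm)$-linearity of $\Phi$ is automatic from the uniqueness of the coordinates; surjectivity follows at once from the glueing property of $\mathscr M$, since given any measurable section of $\T$ the Borel coordinates produced on each $E_n$ can be glued into an element $w\in\mathscr M$ with $\Phi(w)=s$.

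The step I expect to be most delicate is showing that $\Phi$ preserves the pointwise norm, i.e.\ $|\Phi(v)|=|v|$ $\mm$-a.e. By construction this identity holds when the coefficients $f^n_i$ are (Borel representatives of) rational-valued functions, but for general $f^n_i\in L^0(\mm\restr{E_n})$ one must approximate them pointwise $\mm$-a.e.\ by $\Q$-valued Borel functions $q^k_i$ and exploit simultaneously: (i) the continuity of the module operations and of the pointwise norm on $\mathscr M$ recalled in Remark~\ref{rmk:operat_L0_cont}, which yields $|\sum_i q^k_i\cdot v^n_i|\to|v|$ in $L^0(\mm)$; and (ii) the continuity of $\nnorm(x,\cdot)$ on $\R^n$ for $\mm$-a.e.\ $x$, which gives the pointwise convergence $\nnorm(x,q^k(x))\to\nnorm(x,f^n(x))$ a.e. Identifying these two limits in $L^0(\mm)$ yields the isometry $|\Phi(v)|=|v|$, from which both the injectivity of $\Phi$ and the fact that $\Phi^{-1}$ is itself a module morphism are immediate; the density of simple sections (Lemma~\ref{lemma:density_simple_sections}) could alternatively be used to bypass the pointwise approximation step on the bundle side.
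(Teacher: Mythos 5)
Your proposal is correct and follows essentially the same route as the paper's proof: you build the bundle from the dimensional decomposition and a choice of local bases, define $\nnorm$ on rational vectors via Borel representatives of $\big|\sum_i q_i\cdot v^n_i\big|$, discard a negligible set where the norm axioms fail, extend by continuity, check measurability via the Carath\'{e}odory property, and verify the isometry by rational/simple-section approximation. The only (immaterial) differences are that you orient the isomorphism as $\mathscr M\to\Gamma(\T)$ rather than $\Gamma(\T)\to\mathscr M$ and set $\nnorm(x,\cdot):=|\cdot|$ instead of $0$ on the exceptional set.
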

\begin{proof}
Let $(E_n)_{n\in\N}$ be a dimensional decomposition of the module $\mathscr M$. 
Set $\underline E:=(E_n)_{n\in\N}$ and take $T$, $\pi$ as in the definition of MBB.
In order to define $\nnorm$, fix a sequence $(v_n)_{n\in\N}\subseteq\mathscr M$ such
that the elements $v_1,\ldots,v_n$ form a local basis for $\mathscr M$ on $E_n$ for each $n\in\N$.
Fix $n\in\N$. We define the linear and continuous operator $P_n:\,\R^n\to\mathscr M$
in the following way:
\[P_n(c):=\nchi_{E_n}\cdot(c_1\,v_1+\ldots+c_n\,v_n)\in\mathscr M
\quad\text{ for every }c=(c_1,\ldots,c_n)\in\R^n.\]
For any $q\in\Q^n$, choose any Borel representative $\overline{\big|P_n(q)\big|}:\,\X\to[0,+\infty)$
of $\big|P_n(q)\big|\in L^0(\mm)$. Hence there is a Borel set $N_n\subseteq E_n$,
with $\mm(N_n)=0$, such that for every $x\in E_n\setminus N_n$ it holds
\begin{equation}\label{eq:Gamma_essentially_surj_aux}\begin{split}
\overline{\big|P_n(q^1)+P_n(q^2)\big|}(x)\leq
\overline{\big|P_n(q^1)\big|}(x)+\overline{\big|P_n(q^2)\big|}(x)&\quad\text{ for every }q^1,q^2\in\Q^n,\\
\overline{\big|P_n(\lambda\,q)\big|}(x)=|\lambda|\,\overline{\big|P_n(q)\big|}(x)
&\quad\text{ for every }\lambda\in\Q\text{ and }q\in\Q^n,\\
\overline{\big|P_n(q)\big|}(x)>0&\quad\text{ for every }q\in\Q^n\setminus\{0\}.
\end{split}\end{equation}
Then let us define
\begin{equation}\label{eq:def_nnorm}
\nnorm(x,q):=\overline{\big|P_n(q)\big|}(x)
\quad\text{ for every }x\in E_n\setminus N_n\text{ and }q\in\Q^n.
\end{equation}
We deduce from \eqref{eq:Gamma_essentially_surj_aux} that $\nnorm(x,\cdot)$ is
a norm on $\Q^n$ for every $x\in E_n\setminus N_n$. In particular it is uniformly
continuous, whence it can be uniquely extended to a uniformly continuous map on
the whole $\R^n$, still denoted by $\nnorm(x,\cdot)$. By approximation, we see that
such extension is actually a norm on $\R^n$.
Finally, we set $\nnorm(x,c):=0$ for every $x\in N_n$ and $c\in\R^n$. We thus built a
function $\nnorm:\,T\to[0,+\infty)$. We claim that
\begin{equation}\label{eq:Gamma_essentially_surj_claim}
\nnorm\restr{E_n\times\R^n}\text{ is a Carath\'{e}odory function}
\quad\text{ for every }n\in\N,
\end{equation}
which grants that each $\nnorm\restr{E_n\times\R^n}$ is Borel, so accordingly that
$\nnorm$ is measurable by Remark \ref{rmk:charact_sigma-algebra_on_T}. First of all,
fix $n\in\N$ and notice that the function $\nnorm(x,\cdot):\,\R^n\to[0,+\infty)$ is
continuous for every $x\in E_n$. Moreover, given any $c\in\R^n$ and a
sequence $(q^k)_{k\in\N}\subseteq\Q^n$ converging to $c$, we have that
$\nnorm(x,c)=\lim_k\nnorm(x,q^k)=\lim_k\overline{\big|P_n(q^k)\big|}(x)$
for every $x\in E_n\setminus N_n$, whence the function $\nnorm(\cdot,c):\,E_n\to[0,+\infty)$ is Borel
as pointwise limit of a sequence of Borel functions. Therefore the claim
\eqref{eq:Gamma_essentially_surj_claim} is proved. We thus deduce that
$\overline\T:=(T,\underline E,\pi,\nnorm)$ is an MBB over the space $\XX$.
Then let us denote by $\T$ the equivalence class of $\overline\T$.

In order to get the thesis, we want to exhibit a module
isomorphism $I:\,\Gamma(\T)\to\mathscr M$, namely an $L^0(\mm)$-linear
map preserving the pointwise norm. We proceed as follows:
given any $s\in\Gamma(\T)$, choose a representative $\overline s:\,\X\to T$.
For any $n\in\N$, pick $\overline c^n:\,\X\to\R^n$ Borel such that
$\overline s(x)=\big(x,\overline c^n(x)\big)$ for every $x\in E_n$ and
call $c^n_1,\ldots,c^n_n\in L^0(\mm)$ those elements for which
$(c^n_1,\ldots,c^n_n)$ is the equivalence class of $\overline c^n$.
Now let us define
\begin{equation}
I(s):=\sum_{n\in\N}\nchi_{E_n}\cdot(c^n_1\cdot v_1+\ldots+c^n_n\cdot v_n)\in\mathscr M.
\end{equation}
One can easily see that the resulting map $I:\,\Gamma(\T)\to\mathscr M$ is a
(well-defined) $L^0(\mm)$-linear and continuous operator. We show that it is surjective:
fix any $v\in\mathscr M$, whence for each $n\in\N$ there exist $c^n_1,\ldots,c^n_n\in L^0(\mm)$
such that $\nchi_{E_n}\cdot v=\nchi_{E_n}\cdot(c^n_1\cdot v_1+\ldots+c^n_n\cdot v_n)$.
Pick any Borel representative $\overline c^n:\,\X\to\R^n$ of $(c^n_1,\ldots,c^n_n)$
and define $\overline s:\,\X\to T$ as $\overline s(x):=\big(x,\overline c^n(x)\big)$
for every $n\in\N$ and $x\in E_n$. Hence the equivalence class $s\in\Gamma(\T)$ of
$\overline s$ satisfies $I(s)=v$, thus proving that the map $I$ is surjective.
It only remains to prove that $\big|I(s)\big|=|s|$ holds $\mm$-a.e.\ in $\X$ for every
$s\in\Gamma(\T)$. First of all, for any $n\in\N$ and $q\in\Q^n$ one has that
$I(\boldsymbol q)=P_n(q)$, where the definition of $\boldsymbol q$ is taken from
Remark \ref{rmk:constant_sections}. Therefore
\begin{equation}\label{eq:Gamma_essentially_surj_aux2}
\big|I(\boldsymbol q)\big|=\big|P_n(q)\big|\overset{\eqref{eq:def_nnorm}}=
\nnorm\circ\boldsymbol q\overset{\eqref{eq:def_ptwse_norm}}=|\boldsymbol q|
\quad\text{ holds }\mm\text{-a.e.\ in }\X.
\end{equation}
We then directly deduce from \eqref{eq:Gamma_essentially_surj_aux2} and
the $L^0(\mm)$-linearity of $I$ that the equality $\big|I(t)\big|=|t|$ is
verified $\mm$-a.e.\ for every simple section $t\in\sfS(\T)$.
Recall that $\sfS(\T)$ is dense in $\Gamma(\T)$, as seen in
Lemma \ref{lemma:density_simple_sections}. Since both $I$ and the pointwise
norm are continuous operators, we finally conclude that $\big|I(s)\big|=|s|$
holds $\mm$-a.e.\ for every $s\in\Gamma(\T)$. Therefore $I$ preserves the
pointwise norm, thus completing the proof.
\end{proof}
\begin{proposition}\label{prop:Gamma_full_faithful}
The section functor $\Gamma:\,\mathbf{MBB}(\XX)\to\mathbf{NMod}_{\rm pr}(\XX)$
is full and faithful.
\end{proposition}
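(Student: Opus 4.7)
For faithfulness, let $\varphi_1,\varphi_2\colon\T_1\to\T_2$ satisfy $\Gamma(\varphi_1)=\Gamma(\varphi_2)$ and choose pre-morphism representatives $\overline\varphi_1,\overline\varphi_2$. Applying the hypothesis to each constant section $\boldsymbol q$ of Remark~\ref{rmk:constant_sections} yields, for every $n\in\N$ and $q\in\Q^n$, the identity $\overline\varphi_1(x,q)=\overline\varphi_2(x,q)$ for $\mm$-a.e.\ $x\in E^1_n$. Taking the countable intersection of the exceptional sets as $(n,q)$ varies, we obtain a full-measure set on which the fiberwise $\R$-linear maps $\overline\varphi_i(x,\cdot)$ coincide on the dense subset $\Q^n$, hence on all of $\R^n$. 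This gives the equivalence~\eqref{eq:equiv_pre-MBB_morphisms}, so $\Gamma$ is faithful.

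For fullness, fix $\Phi\in{\rm Mor}(\Gamma(\T_1),\Gamma(\T_2))$ and representatives $\overline\T_i=(T_i,\underline E^i,\pi_i,\nnorm_i)$ with $\underline E^i=(E^i_n)_{n\in\N}$. For each $n\in\N$ and $q\in\Q^n$ pick a Borel representative $\tau^n_q\colon\X\to T_2$ of $\Phi(\boldsymbol q)\in\Gamma(\T_2)$; since $\boldsymbol q$ is supported on $E^1_n$ and $\Phi$ is $L^0(\mm)$-linear, we can write $\tau^n_q(x)=(x,\psi^n_q(x))$ with $\psi^n_q(x)$ lying in the fiber of $\T_2$ above $x$. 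Combining $L^0$-linearity with the pointwise norm bound $|\Phi(\boldsymbol q)|\le|\boldsymbol q|$ and taking a countable intersection over pairs $(q^1,q^2)\in\Q^n\times\Q^n$ and over $n\in\N$, we extract a single $\mm$-null set $N\subseteq\X$ outside which the assignment $q\mapsto\psi^n_q(x)$ is both $\Q$-linear and $1$-Lipschitz from $(\Q^n,\nnorm_1(x,\cdot))$ into $((\overline\T_2)_x,\nnorm_2(x,\cdot))$ for the appropriate $n$.

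With $\sfe_1,\ldots,\sfe_n$ denoting the standard basis of $\R^n$, we then define
\[
\overline\varphi(x,v):=\Big(x,\sum_{i=1}^n v_i\,\psi^n_{\sfe_i}(x)\Big)\quad\text{for } x\in E^1_n\setminus N,\ v\in\R^n,
\]
extended by zero in each fiber over $N$. On every piece $(E^1_n\cap E^2_m)\times\R^n$ this formula is Borel, so by Remark~\ref{rmk:charact_sigma-algebra_on_T} the map $\overline\varphi\colon T_1\to T_2$ is measurable; commutativity with the projections is built in, fiberwise $\R$-linearity is automatic, and the $1$-Lipschitz bound proved over $\Q^n$ extends to $\R^n$ by continuity of both norms $\nnorm_i(x,\cdot)$, so $\overline\varphi$ is a genuine MBB pre-morphism. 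By construction $\Gamma(\overline\varphi)(\boldsymbol q)=\Phi(\boldsymbol q)$ for every rational constant section; since both $\Gamma(\overline\varphi)$ and $\Phi$ are $L^0(\mm)$-linear (Lemma~\ref{lemma:Gamma_functor}) and continuous (any module morphism is automatically $1$-Lipschitz for the distance~\eqref{eq:dist_on_mod}), they agree on the class $\sfS(\T_1)$ of simple sections and hence, by Lemma~\ref{lemma:density_simple_sections}, on the whole of $\Gamma(\T_1)$.

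The main obstacle is the central construction of $\overline\varphi$: one has to cope with the fact that the target fiber dimension $m$ varies measurably with $x$, purge a \emph{countable} family of exceptional null sets to secure simultaneous $\Q$-linearity and the $1$-Lipschitz bound on a single full-measure set, and then verify that the unique continuous $\R$-linear fiberwise extension is realised by a Borel formula, namely the one above with coefficients $\psi^n_{\sfe_i}$. Once this is in place, measurability of $\overline\varphi$, the passage of the $1$-Lipschitz bound from $\Q^n$ to $\R^n$, and the identity $\Gamma(\overline\varphi)=\Phi$ follow by routine continuity/density arguments.
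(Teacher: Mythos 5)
Your proof is correct and follows essentially the same route as the paper's: faithfulness via the constant sections of Remark \ref{rmk:constant_sections} (you argue the contrapositive, the paper the direct implication), and fullness by defining $\overline\varphi$ fiberwise from $\Phi(\boldsymbol q)$ for rational $q$ after purging a countable family of null sets, then concluding via $L^0$-linearity, the density Lemma \ref{lemma:density_simple_sections}, and continuity. The only (harmless) stylistic difference is that you realise the fiberwise $\R$-linear extension by the explicit Borel formula $v\mapsto\sum_i v_i\,\psi^n_{\sfe_i}(x)$, whereas the paper extends abstractly from $\Q^n$ by uniform continuity and checks measurability via a Carath\'eodory-function argument.
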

\begin{proof}
\textsc{Faithful.} Fix two measurable Banach bundles $\T_1$, $\T_2$ and
two different bundle morphisms $\varphi,\psi\in{\rm Mor}(\T_1,\T_2)$.
Choose a representative $\overline T_i=(T_i,\underline E^i,\pi_i,\nnorm_i)$
of $\T_i$ for $i=1,2$, then representatives $\overline\varphi,\overline\psi:\,T_1\to T_2$
of $\varphi$ and $\psi$, respectively. Hence there exist $n\in\N$ and
a Borel set $E\subseteq E^1_n$, with $\mm(E)>0$, such that
$\overline\varphi\restr{(\overline\T_1)_x}\neq\overline\psi\restr{(\overline\T_1)_x}$
for every $x\in E$. Let us denote by $\sfe_1,\ldots,\sfe_n$ the canonical basis of $\R^n$.
Therefore there exists $k\in\{1,\ldots,n\}$ such that
\[\mm\big(\big\{x\in E\,:\,\overline\varphi(x,\sfe_k)\neq\overline\psi(x,\sfe_k)\big\}\big)>0.\]
This means that
$\overline\varphi\circ\overline{\boldsymbol{{\sfe}}}_k$ is not $\mm$-a.e.\ coincident with
$\overline\psi\circ\overline{\boldsymbol{{\sfe}}}_k$, where the section $\overline{\boldsymbol{{\sfe}}}_k$
is defined as in Remark \ref{rmk:constant_sections}, whence
$\Gamma(\varphi)(\boldsymbol{\sfe}_k)\neq\Gamma(\psi)(\boldsymbol{\sfe}_k)$.
This implies that $\Gamma(\varphi)\neq\Gamma(\psi)$, thus proving that the functor
$\Gamma$ is faithful.\\
\textsc{Full.} Fix measurable Banach bundles $\T_1$, $\T_2$ and
a module morphism $\Phi:\,\Gamma(\T_1)\to\Gamma(\T_2)$. We aim to show
that there exists a bundle morphism $\varphi\in{\rm Mor}(\T_1,\T_2)$
such that $\Phi=\Gamma(\varphi)$. Since the ideas of the proof are similar in spirit
to those that have been used for proving Proposition \ref{prop:Gamma_essentially_surj},
we will omit some details. Choose a representative $\overline\T_i=(T_i,\underline E^i,\pi_i,\nnorm_i)$
of $\T_i$ for $i=1,2$. We define the Borel sets $F_{n,m}\subseteq\X$ as
\[F_{n,m}:=E^1_n\cap E^2_m\quad\text{ for every }n,m\in\N.\]
For any $n\in\N$ and $q\in\Q^n$, consider the section $\boldsymbol q\in\Gamma(\T_1)$ as
in Remark \ref{rmk:constant_sections} and
choose a representative $\overline{\Phi(\boldsymbol q)}:\,\X\to T_2$
of $\Phi(\boldsymbol q)\in\Gamma(\T_2)$. Given any $n,m\in\N$, there exists a Borel
subset $N_{n,m}$ of $F_{n,m}$, with $\mm(N_{n,m})=0$, such that for every
$x\in F_{n,m}\setminus N_{n,m}$ it holds
\begin{equation}\label{eq:Gamma_full_aux}\begin{split}
\overline{\Phi({\boldsymbol q}^1+{\boldsymbol q}^2)}(x)=
\overline{\Phi({\boldsymbol q}^1)}(x)+\overline{\Phi({\boldsymbol q}^2)}(x)
&\quad\text{ for every }q^1,q^2\in\Q^n,\\
\overline{\Phi(\lambda\,\boldsymbol q)}(x)=\lambda\,\overline{\Phi(\boldsymbol q)}(x)
&\quad\text{ for every }\lambda\in\Q\text{ and }q\in\Q^n,\\
\nnorm_2\big(\,\overline{\Phi(\boldsymbol q)}(x)\big)\leq\nnorm_1\big(\overline{\boldsymbol q}(x)\big)
&\quad\text{ for every }q\in\Q^n.
\end{split}\end{equation}
Then let us define
\begin{equation}
\overline\varphi(x,q):=\left\{\begin{array}{ll}
\overline{\Phi(\boldsymbol q)}(x)\\
0_{\R^m}
\end{array}\quad\begin{array}{ll}
\text{ for every }x\in F_{n,m}\setminus N_{n,m}\text{ and }q\in\Q^n,\\
\text{ for every }x\in N_{n,m}\text{ and }q\in\Q^n.
\end{array}\right.
\end{equation}
Property \eqref{eq:Gamma_full_aux} grants that
$\overline\varphi(x,\cdot):\,\big(\Q^n,\nnorm_1(x,\cdot)\big)\to\big(\R^m,\nnorm_2(x,\cdot)\big)$
is a $\Q$-linear $1$-Lipschitz operator for all $x\in F_{n,m}$, whence it can be uniquely
extended to an $\R$-linear $1$-Lipschitz operator $\overline\varphi(x,\cdot):\,
\big(\R^n,\nnorm_1(x,\cdot)\big)\to\big(\R^m,\nnorm_2(x,\cdot)\big)$.
This defines a map $\overline\varphi:\,T_1\to T_2$. To show that such map is an MBB
pre-morphism, it only remains to check its measurability, which amounts to proving that
$\overline\varphi\restr{F_{n,m}\times\R^n}:\,F_{n,m}\times\R^n\to F_{n,m}\times\R^m$
is Borel for every $n,m\in\N$. We actually show that each $\overline\varphi\restr{F_{n,m}\times\R^n}$
is a Carath\'{e}odory map: for any $x\in F_{n,m}$ we have that $\overline\varphi(x,\cdot)$
is continuous by its very construction, while for any vector $c\in\R^n$ we have that the map
$F_{n,m}\ni x\mapsto\overline\varphi(x,c)\in\R^m$ is Borel as pointwise limit of
the Borel maps $\nchi_{F_{n,m}}\,\overline{\Phi(\boldsymbol q^k)}$, where
$(q^k)_{k\in\N}\subseteq\Q^n$ is any sequence converging to $c$.
Hence let us define $\varphi\in{\rm Mor}(\T_1,\T_2)$ as the equivalence class of
the MBB pre-morphism $\overline\varphi$.

We conclude by proving that $\Gamma(\varphi)=\Phi$. For any $n\in\N$ and $q\in\Q^n$,
we have that a representative of $\Gamma(\varphi)(\boldsymbol q)$ is given by the
map $\overline\varphi\circ\overline{\boldsymbol q}$, which $\mm$-a.e.\ coincides
in $E^1_n$ with $\overline{\Phi(\boldsymbol q)}$, whence $\Gamma(\varphi)(\boldsymbol q)=\Phi(\boldsymbol q)$.
Since both $\Gamma(\varphi)$ and $\Phi$ are $L^0(\mm)$-linear, we thus immediately
deduce that $\Gamma(\varphi)(t)=\Phi(t)$ for every $t\in\sfS(\T_1)$.
Finally, the density of $\sfS(\T_1)$ in $\Gamma(\T_1)$, proven in Lemma \ref{lemma:density_simple_sections},
together with the continuity of $\Gamma(\varphi)$ and $\Phi$, grant
that $\Gamma(\varphi)=\Phi$, as required. Therefore the section functor $\Gamma$ is full.
\end{proof}

We now collect the last two results, thus obtaining the main theorem of the paper:
\begin{theorem}[Serre-Swan]\label{thm:Serre-Swan}
Let $\XX=(\X,\sfd,\mm)$ be a metric measure space. Then the section functor
$\Gamma:\,\mathbf{MBB}(\XX)\to\mathbf{NMod}_{\rm pr}(\XX)$ on $\XX$ is an
equivalence of categories.
\end{theorem}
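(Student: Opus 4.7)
The plan is to invoke the standard categorical criterion (see e.g.\ \cite{MacLane98}): a covariant functor between two categories is an equivalence of categories if and only if it is fully faithful and essentially surjective on objects. Once this criterion is in place, the theorem reduces to a direct combination of the two propositions established just above its statement, and no further computation is required.

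Concretely, I would first recall that essential surjectivity of $\Gamma$ is precisely Proposition \ref{prop:Gamma_essentially_surj}: given any object $\mathscr M\in\mathbf{NMod}_{\rm pr}(\XX)$, we exhibited there a measurable Banach bundle $\T$ over $\XX$ together with an isomorphism $\Gamma(\T)\cong\mathscr M$ in the category $\mathbf{NMod}_{\rm pr}(\XX)$. Likewise, the property of being full and faithful is Proposition \ref{prop:Gamma_full_faithful}, where we showed that for any two MBB's $\T_1,\T_2$ over $\XX$ the map
\[
\Gamma\colon {\rm Mor}(\T_1,\T_2)\to {\rm Mor}\bigl(\Gamma(\T_1),\Gamma(\T_2)\bigr),\qquad \varphi\mapsto \Gamma(\varphi),
\]
is a bijection. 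Combining these two facts with the categorical criterion yields that $\Gamma$ is an equivalence of categories.

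If one prefers the more constructive formulation of the statement, the same two ingredients also let us write down a quasi-inverse functor $\Lambda\colon \mathbf{NMod}_{\rm pr}(\XX)\to\mathbf{MBB}(\XX)$ explicitly. For each object $\mathscr M$ we use (the axiom of choice and) Proposition \ref{prop:Gamma_essentially_surj} to pick one representative bundle $\T_{\mathscr M}$ together with an isomorphism $I_{\mathscr M}\colon \Gamma(\T_{\mathscr M})\to\mathscr M$; we then set $\Lambda(\mathscr M):=\T_{\mathscr M}$. For a morphism $\Phi\colon\mathscr M\to\mathscr N$ we define $\Lambda(\Phi)$ as the unique $\varphi\in{\rm Mor}(\T_{\mathscr M},\T_{\mathscr N})$ such that $\Gamma(\varphi)=I_{\mathscr N}^{-1}\circ\Phi\circ I_{\mathscr M}$, whose existence and uniqueness are granted by Proposition \ref{prop:Gamma_full_faithful}. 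Functoriality of $\Lambda$ and the existence of natural isomorphisms $\Gamma\circ\Lambda\cong{\rm id}_{\mathbf{NMod}_{\rm pr}(\XX)}$ and $\Lambda\circ\Gamma\cong{\rm id}_{\mathbf{MBB}(\XX)}$ then follow from the faithfulness of $\Gamma$ and the defining properties of the chosen isomorphisms $I_{\mathscr M}$.

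Since both building blocks have already been proved in full detail, there is no substantive obstacle left: the only delicate point was the technical density lemma (Lemma \ref{lemma:density_simple_sections}) used to extend $\Q$-linear information on simple sections to the whole of $\Gamma(\T)$, which has already been absorbed into the proofs of the two previous propositions. The theorem thus follows immediately.
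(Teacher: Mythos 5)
Your proposal is correct and follows exactly the paper's argument: the proof of Theorem \ref{thm:Serre-Swan} likewise invokes the standard criterion (cited there as \cite[Proposition 7.25]{awodey2006category}) that a fully faithful, essentially surjective functor is an equivalence, and then combines Proposition \ref{prop:Gamma_essentially_surj} with Proposition \ref{prop:Gamma_full_faithful}. Your additional sketch of an explicit quasi-inverse is a harmless elaboration of the same idea.
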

\begin{proof}
By \cite[Proposition 7.25]{awodey2006category} it suffices to prove that
the functor $\Gamma$ is fully faithful and `essentially surjective', the
latter meaning that for each object $\mathscr M$ of $\mathbf{NMod}_{\rm pr}(\XX)$
there exists an object $\T$ of $\mathbf{MBB}(\XX)$ such that $\Gamma(\T)$
and $\mathscr M$ are isomorphic. Therefore Proposition \ref{prop:Gamma_essentially_surj}
and Proposition \ref{prop:Gamma_full_faithful} yield the thesis.
\end{proof}
\section{Some further constructions}
\subsection{Hilbert modules and measurable Hilbert bundles}
An important class of $L^0$-normed $L^0$-modules is that of \emph{Hilbert modules}, defined as follows:
\begin{definition}[Hilbert module]
Let $\mathscr M$ be an $L^0(\mm)$-normed $L^0(\mm)$-module. Then we say that
$\mathscr M$ is a \emph{Hilbert module} provided it satisfies the \emph{pointwise parallelogram rule}, i.e.\
\begin{equation}
|v+w|^2+|v-w|^2=2\,|v|^2+2\,|w|^2\;\;\;\mm\text{-a.e.\ }\quad\text{for every }v,w\in\mathscr M.
\end{equation}
\end{definition}

We shall denote by $\mathbf{HNMod}_{\rm pr}(\XX)$ the subcategory of $\mathbf{NMod}_{\rm pr}(\XX)$
made of those modules that are Hilbert modules. Our goal is to characterise those
measurable Banach bundles that correspond to the Hilbert modules via the section functor $\Gamma$.
As one might expect, such bundles are precisely the ones given by the following definition:
\begin{definition}[Measurable Hilbert bundle]
Let $\T$ be a measurable Banach bundle over the space $\XX$. Then we say that $\T$ is
a \emph{measurable Hilbert bundle}, or briefly \emph{MHB}, provided for one (thus any)
representative $\overline\T=(T,\underline E,\pi,\nnorm)$ of $\T$ it holds that $\nnorm(x,\cdot)$
is a norm induced by a scalar product for $\mm$-a.e.\ point $x\in\X$.
\end{definition}

Given any such point $x\in\X$, we denote the associated scalar product on $(\overline\T)_x$ by
\begin{equation}\label{eq:scalar_product}
\big\langle(x,v),(x,w)\big\rangle_x
:=\frac{\nnorm(x,v+w)^2-\nnorm(x,v)^2-\nnorm(x,w)^2}{2}
\end{equation}
for every $(x,v),(x,w)\in(\overline\T)_x$.

We shall denote by $\mathbf{MHB}(\XX)$ the subcategory of $\mathbf{MBB}(\XX)$ made of those
bundles that are measurable Hilbert bundles. Therefore we can easily prove that:
\begin{proposition}\label{prop:Hilbert_case}
Let $\T$ be a measurable Banach bundle over $\XX$. Then $\T$ is a measurable Hilbert bundle
if and only if $\Gamma(\T)$ is a Hilbert module.
\end{proposition}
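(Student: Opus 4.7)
The plan is to prove both implications by transferring the pointwise parallelogram identity between the fiberwise norm $\nnorm(x,\cdot)$ and the pointwise norm $|\cdot|$ on $\Gamma(\T)$, using the density of constant (and hence simple) sections together with the continuity of the involved operators.

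\textbf{Forward direction.} Suppose $\T$ is an MHB, and fix a representative $\overline\T=(T,\underline E,\pi,\nnorm)$ such that $\nnorm(x,\cdot)$ is induced by a scalar product for every $x$ in a full-measure Borel set. For any $s,t\in\Gamma(\T)$, pick representatives $\overline s,\overline t:\X\to T$. Applying the parallelogram identity fiberwise at $\mm$-a.e.\ $x$, using \eqref{eq:def_ptwse_norm}, yields
\begin{equation*}
|s+t|^2(x)+|s-t|^2(x)=\nnorm\big(x,\overline s(x)+\overline t(x)\big)^2+\nnorm\big(x,\overline s(x)-\overline t(x)\big)^2=2|s|^2(x)+2|t|^2(x),
\end{equation*}
so $\Gamma(\T)$ is a Hilbert module.

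\textbf{Reverse direction.} Assume $\Gamma(\T)$ is a Hilbert module. Fix $n\in\N$ and two rational vectors $q^1,q^2\in\Q^n$, and consider the constant sections $\boldsymbol{q}^1,\boldsymbol{q}^2\in\Gamma(\T)$ from Remark \ref{rmk:constant_sections}. The pointwise parallelogram rule applied to this pair, combined with \eqref{eq:def_ptwse_norm} and the fact that on $E_n$ the constant section $\boldsymbol{q}^1\pm\boldsymbol{q}^2$ corresponds to the vector $q^1\pm q^2$, gives
\begin{equation*}
\nnorm(x,q^1+q^2)^2+\nnorm(x,q^1-q^2)^2=2\,\nnorm(x,q^1)^2+2\,\nnorm(x,q^2)^2
\end{equation*}
for $\mm$-a.e.\ $x\in E_n$, with the negligible set depending on $(n,q^1,q^2)$.

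\textbf{Passage to all of $\R^n$.} Taking the countable union of these negligible sets over all $n\in\N$ and all pairs $(q^1,q^2)\in\Q^n\times\Q^n$, we obtain a single $\mm$-negligible Borel set $N\subseteq\X$ such that the identity above holds for every $x\in E_n\setminus N$ and every $q^1,q^2\in\Q^n$, simultaneously for all $n$. Since $\nnorm(x,\cdot)$ is a continuous function on $\R^n$ (indeed a norm) for $\mm$-a.e.\ $x\in E_n$, and $\Q^n$ is dense in $\R^n$, the identity extends by continuity to arbitrary $v,w\in\R^n$. Hence $\nnorm(x,\cdot)^2$ satisfies the parallelogram law for $\mm$-a.e.\ $x$, so by polarization via \eqref{eq:scalar_product} it is induced by a scalar product, showing that $\T$ is an MHB.

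\textbf{Main obstacle.} The only delicate point is the second direction: the parallelogram identity is an $\mm$-a.e.\ statement that depends on the chosen vectors, so one must quantify carefully to extract a \emph{single} null set exceptional for \emph{all} $q^1,q^2$. The use of the countable dense family $\Q^n$ plus continuity of $\nnorm(x,\cdot)$ is precisely what makes this work, mirroring the analogous density argument employed in the proof of Proposition \ref{prop:Gamma_essentially_surj}.
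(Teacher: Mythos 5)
Your proof is correct and follows essentially the same route as the paper's: the forward direction is the same fiberwise computation, and the reverse direction uses exactly the paper's device of testing the parallelogram identity on constant rational sections, collecting the countably many null sets, and extending to all of $\R^n$ by density of $\Q^n$ and continuity of $\nnorm(x,\cdot)$. No gaps.
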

\begin{proof}
Choose any representative $\overline\T=(T,\underline E,\pi,\nnorm)$ of
the measurable Banach bundle $\T$.\\
\textsc{Necessity.} Suppose that $\T$ is a measurable Hilbert bundle.
This means that $\nnorm(x,\cdot)$ satisfies the parallelogram rule
for $\mm$-a.e.\ $x\in\X$. Now let $s_1,s_2\in\Gamma(\T)$ be fixed and choose
some representatives $\overline s_1,\overline s_2:\,\X\to T$. Hence for
$\mm$-a.e.\ $x\in\X$ it holds that
\[\begin{split}
|\overline s_1+\overline s_2|^2(x)+|\overline s_1-\overline s_2|^2(x)
&=\big(\nnorm\circ(\overline s_1+\overline s_2)\big)^2(x)
+\big(\nnorm\circ(\overline s_1-\overline s_2)\big)^2(x)\\
&=2\,(\nnorm\circ\overline s_1)^2(x)+2\,(\nnorm\circ\overline s_2)^2(x)\\
&=2\,|\overline s_1|^2(x)+2\,|\overline s_2|^2(x),
\end{split}\]
which grants that $|s_1+s_2|^2+|s_1-s_2|^2=2\,|s_1|^2+2\,|s_2|^2$ holds $\mm$-a.e.\ in $\X$.
Therefore $\Gamma(\T)$ is a Hilbert module by arbitrariness of $s_1,s_2\in\Gamma(\T)$.\\
\textsc{Sufficiency.} Suppose that $\Gamma(\T)$ is Hilbert module. Let $n\in\N$ be fixed.
For any $q\in\Q^n$, consider $\overline{\boldsymbol q}:\,\X\to T$ and
$\boldsymbol q\in\Gamma(\T)$ as in Remark \ref{rmk:constant_sections}.
Then there exists an $\mm$-negligible Borel
subset $N_n$ of $E_n$ such that $\nnorm(x,\cdot)$ is a norm and the equality
\[|\overline{\boldsymbol q}_1+\overline{\boldsymbol q}_2|^2(x)
+|\overline{\boldsymbol q}_1-\overline{\boldsymbol q}_2|^2(x)
=2\,|\overline{\boldsymbol q}_1|^2(x)+
2\,|\overline{\boldsymbol q}_2|^2(x)
\quad\text{ for every }q_1,q_2\in\Q^n\]
is satisfied for every point $x\in E_n\setminus N_n$. This implies that
\[\nnorm(x,q_1+q_2)^2+\nnorm(x,q_1-q_2)^2=2\,\nnorm(x,q_1)^2+2\,\nnorm(x,q_2)^2
\quad\text{ for every }x\in E_n\setminus N_n\text{ and }q_1,q_2\in\Q^n.\]
Therefore $\nnorm(x,\cdot)$ satisfies the parallelogram rule for every $x\in E_n\setminus N_n$
by continuity, so that accordingly $\T$ is a measurable Hilbert bundle.
\end{proof}

As a consequence of Proposition \ref{prop:Hilbert_case}, we finally conclude that
\begin{theorem}[Serre-Swan for Hilbert modules]
The section functor $\Gamma$ restricts to an equivalence of categories
between $\mathbf{MHB}(\XX)$ and $\mathbf{HNMod}_{\rm pr}(\XX)$.
\end{theorem}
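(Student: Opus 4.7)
The plan is to leverage the already-established main theorem together with the fiberwise characterisation of the Hilbert case. Namely, by \cite[Proposition 7.25]{awodey2006category} it suffices to show that the restricted functor $\Gamma \colon \mathbf{MHB}(\XX) \to \mathbf{HNMod}_{\rm pr}(\XX)$ is well-defined (i.e.\ actually lands in the target category), fully faithful, and essentially surjective. Both subcategories are \emph{full} subcategories of the corresponding ambient categories (the morphism sets being inherited unchanged), so nothing new has to be checked on arrows once the object level is settled.

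First, I would observe that $\Gamma$ does map $\mathbf{MHB}(\XX)$ into $\mathbf{HNMod}_{\rm pr}(\XX)$: this is exactly the necessity direction of Proposition \ref{prop:Hilbert_case}. Fully faithfulness of the restriction is then immediate from Proposition \ref{prop:Gamma_full_faithful}, because both $\mathrm{Hom}$-sets coincide with those computed in $\mathbf{MBB}(\XX)$ and $\mathbf{NMod}_{\rm pr}(\XX)$.

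The only substantive point is essential surjectivity. Given a Hilbert module $\mathscr M$ in $\mathbf{HNMod}_{\rm pr}(\XX)$, Proposition \ref{prop:Gamma_essentially_surj} yields some $\T \in \mathbf{MBB}(\XX)$ together with a module isomorphism $I \colon \Gamma(\T) \to \mathscr M$. A brief remark I would include: any module isomorphism $I$ preserves the pointwise norm, because both $I$ and $I^{-1}$ satisfy the contraction inequality built into the definition of module morphism, so $|I(v)|\leq|v|=|I^{-1}(I(v))|\leq|I(v)|$ holds $\mm$-a.e. Transporting the pointwise parallelogram identity from $\mathscr M$ back to $\Gamma(\T)$ through $I$ then shows that $\Gamma(\T)$ is itself a Hilbert module, and the sufficiency direction of Proposition \ref{prop:Hilbert_case} gives $\T \in \mathbf{MHB}(\XX)$.

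I do not anticipate any genuine obstacle here; the entire argument is a formal consequence of Theorem \ref{thm:Serre-Swan} and Proposition \ref{prop:Hilbert_case}. The only mildly delicate step is the observation that isomorphisms in $\mathbf{NMod}_{\rm pr}(\XX)$ are automatically pointwise isometries, but this is forced by the one-sided contractivity condition imposed on module morphisms.
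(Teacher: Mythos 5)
Your proposal is correct and follows essentially the same route as the paper, which derives this theorem directly as a consequence of Theorem \ref{thm:Serre-Swan} and Proposition \ref{prop:Hilbert_case} (the paper gives no separate proof beyond the phrase ``as a consequence of Proposition \ref{prop:Hilbert_case}''). Your added observation that module isomorphisms are automatically pointwise isometries is a correct and reasonable way to justify transporting the parallelogram identity, although the isomorphism $I$ produced in Proposition \ref{prop:Gamma_essentially_surj} is already constructed to preserve the pointwise norm, so even this step is available off the shelf.
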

\begin{remark}{\rm
It has been proved in \cite[Theorem 1.4.11]{Gigli14} that any separable
Hilbert module (thus in particular any proper Hilbert module by Remark \ref{rmk:prop_mod_separable})
is exactly the space of sections of a suitable measurable Hilbert bundle.
Moreover, as pointed out in \cite[Remark 1.4.12]{Gigli14}, this theory of
Hilbert modules coincides with that of direct integral of Hilbert spaces
(cf.\ \cite{Takesaki79}). More precisely, under this identification, a
Hilbert module corresponds to a measurable field of Hilbert spaces.
\fr}\end{remark}

Let $\mathscr H_1$, $\mathscr H_2$ be two given Hilbert modules over $\XX$.
Then we can consider their tensor product $\mathscr H_1\otimes\mathscr H_2$,
which is a Hilbert module over $\XX$ as well (cf.\ \cite[Section 1.5]{Gigli14}).
\begin{remark}{\rm
Suppose that $\mathscr H_1$ and $\mathscr H_2$ are proper, with dimensional
decomposition $(E^1_n)_{n\in\N}$ and $(E^2_m)_{m\in\N}$, respectively.
Then it can be readily checked that $\mathscr H_1\otimes\mathscr H_2$ has dimension
equal to $nm$ on $E^1_n\cap E^2_m$ for any $n,m\in\N$. In particular, the dimensional
decomposition $(E_k)_{k\in\N}$ of $\mathscr H_1\otimes\mathscr H_2$ is given by
\begin{equation}\label{eq:dd_tensor}
E_k:=\bigcup_{\substack{n,m\in\N: \\ nm=k}}E^1_n\cap E^2_m\quad\text{ for every }k\in\N,
\end{equation}
so that $\mathscr H_1\otimes\mathscr H_2$ is a proper module as well.
\fr}\end{remark}

On the other hand, we now define the tensor product of two MHB's in the following way:
\begin{definition}[Tensor product of MHB's]
Let $\T_1$, $\T_2$ be measurable Hilbert bundles over $\XX$. Choose two representatives
$\overline\T_1$ and $\overline\T_2$, say $\overline\T_i=(T_i,\underline E^i,\pi_i,\nnorm_i)$
for $i=1,2$. Let us define $\underline E=(E_k)_{k\in\N}$ as in \eqref{eq:dd_tensor} and
$T$, $\pi$ accordingly. Given $n,m\in\N$ and $x\in E^1_n\cap E^2_m$ such that
$\nnorm_1(x,\cdot)$, $\nnorm_2(x,\cdot)$ are norms induced by a scalar product, we define
\begin{equation}
\nnorm(x,c):=\bigg(\sum_{j,j'=1}^n\,\sum_{\ell,\ell'=1}^m c_{(j-1)m+\ell}\,c_{(j'-1)m+\ell'}
\,\big\langle(x,\sfe_j),(x,\sfe_{j'})\big\rangle_{1,x}
\,\big\langle(x,\sff_\ell),(x,\sff_{\ell'})\big\rangle_{2,x}\bigg)^{1/2}
\end{equation}
for every $c=(c_1,\ldots,c_{nm})\in\R^{nm}$, where $\sfe_1,\ldots,\sfe_n$ and
$\sff_1,\ldots,\sff_m$ denote the canonical bases of $\R^n$ and $\R^m$, respectively,
while $\la\cdot,\cdot\ra_{i,x}$ stays for the scalar product on $(\overline\T_i)_x$
as in \eqref{eq:scalar_product}.
Then we define the \emph{tensor product} $\T_1\otimes\T_2$ as the equivalence
class of $(T,\underline E,\pi,\nnorm)$,
which turns out to be a measurable Hilbert bundle over $\XX$.
\end{definition}

Given any real number $\lambda\in\R$, we shall write $\lceil\lambda\rceil\in\Z$
to indicate the smallest integer number that is greater than or equal to $\lambda$.
\begin{theorem}
Let $\T_1$, $\T_2$ be measurable Hilbert bundles over $\XX$. Then
\begin{equation}
\Gamma(\T_1)\otimes\Gamma(\T_2)=\Gamma(\T_1\otimes\T_2).
\end{equation}
\end{theorem}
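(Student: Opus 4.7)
The plan is to establish the equality as an isomorphism of Hilbert modules by exploiting the universal property of the tensor product and the Serre-Swan equivalence. First, I would observe that both sides are proper Hilbert modules: $\Gamma(\T_1\otimes\T_2)$ is so by construction, and $\Gamma(\T_1)\otimes\Gamma(\T_2)$ is so by the remark preceding the definition of $\T_1\otimes\T_2$. Moreover, by comparing the dimensional decomposition of $\Gamma(\T_1)\otimes\Gamma(\T_2)$ as given in \eqref{eq:dd_tensor} with the Borel partition $\underline E=(E_k)_{k\in\N}$ used to build $\T_1\otimes\T_2$, one sees immediately that both modules have dimension exactly $nm$ on $E^1_n\cap E^2_m$. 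This matching of local dimensions is what makes an isomorphism plausible, and indeed it is what one has to lift to an actual module isomorphism.

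Next, I would define a fiberwise bilinear map $\beta:\Gamma(\T_1)\times\Gamma(\T_2)\to\Gamma(\T_1\otimes\T_2)$ as follows: given representatives $\overline s_1,\overline s_2$ with $\overline s_i(x)=(x,\overline c^i(x))\in E^i_{n_i}\times\R^{n_i}$ on each piece, set $\beta(s_1,s_2)$ to be the equivalence class of the section that on $E^1_n\cap E^2_m$ takes $x$ to $(x,\overline c^1(x)\otimes\overline c^2(x))\in E^1_n\cap E^2_m\times\R^{nm}$, where the tensor is formed by reading $\R^{nm}$ in the canonical basis $\sfe_j\otimes\sff_\ell$ used to define $\nnorm$. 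Measurability is clear, and bilinearity and $L^0(\mm)$-bilinearity are pointwise. Then I would use the very definition of $\nnorm$ on $\T_1\otimes\T_2$ to verify the pointwise identity
\begin{equation}
\bigl\langle\beta(s_1,s_2),\beta(s'_1,s'_2)\bigr\rangle=\langle s_1,s'_1\rangle_1\,\langle s_2,s'_2\rangle_2\quad\mm\text{-a.e.\ in }\X,
\end{equation}
which on fibers is exactly the definition of $\nnorm(x,\cdot)^2$ expanded by polarisation, giving in particular $|\beta(s_1,s_2)|=|s_1|\,|s_2|$ $\mm$-a.e.

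By the universal property of the tensor product of Hilbert modules from \cite[Section~1.5]{Gigli14}, the $L^0(\mm)$-bilinear map $\beta$ factors uniquely through a module morphism $\Phi:\Gamma(\T_1)\otimes\Gamma(\T_2)\to\Gamma(\T_1\otimes\T_2)$ with $\Phi(s_1\otimes s_2)=\beta(s_1,s_2)$, and the pointwise identity above guarantees that $\Phi$ preserves the pointwise scalar product (hence the pointwise norm). In particular $\Phi$ is injective. To conclude I would show surjectivity by invoking Lemma~\ref{lemma:density_simple_sections}: the simple sections $\sfS(\T_1\otimes\T_2)$ are dense in $\Gamma(\T_1\otimes\T_2)$, and each basis element $\boldsymbol{\sfe}_j\otimes\boldsymbol{\sff}_\ell$ of the local frame on $E^1_n\cap E^2_m$ lies in the image of $\Phi$ (it is $\beta(\boldsymbol{\sfe}_j,\boldsymbol{\sff}_\ell)$); multiplying by $\nchi$'s of Borel sets and rational scalars we hit $\Q$-linear combinations of such tensors, and an approximation inside $\sfS(\T_1\otimes\T_2)$ analogous to the one of Lemma~\ref{lemma:density_simple_sections} shows that $\Phi$ has dense image. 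Since $\Phi$ is an isometry of pointwise norms and $\Gamma(\T_1\otimes\T_2)$ is complete, the image is also closed, so $\Phi$ is onto.

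The main technical obstacle will be matching the two distinct constructions of the pointwise norm on the tensor product: on the module side it is defined abstractly via the universal property of \cite[Section~1.5]{Gigli14}, while on the bundle side it is the explicit fiberwise Hilbert tensor norm. Making the two coincide amounts to proving \eqref{eq:scalar_product}-type identities for $\beta$ on a generating set and then propagating them through bilinearity, continuity, and the density of $\sfS$; once this bookkeeping is done cleanly, the rest is routine.
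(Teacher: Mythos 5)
Your proposal is correct and follows essentially the same route as the paper's proof: both define the map on elementary tensors by the fiberwise Kronecker product of coordinates (your $\overline c^1(x)\otimes\overline c^2(x)$ is exactly the paper's $c_{(j-1)m+\ell}=v_j w_\ell$), verify pointwise norm preservation from the explicit definition of $\nnorm$ on $\T_1\otimes\T_2$, extend by linearity and continuity (which you justify more carefully via the universal property of the Hilbert-module tensor product), and conclude surjectivity from the fact that the constant local basis sections $\boldsymbol{\sfg}_k$ on $E^1_n\cap E^2_m$ lie in the image. As a minor remark, your formula $|\beta(s_1,s_2)|=|s_1|\,|s_2|$ is the correct one; the paper's displayed $\sqrt{|s^1|\,|s^2|}$ appears to be a typo.
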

\begin{proof}
We build an operator $\iota:\,\Gamma(\T_1)\otimes\Gamma(\T_2)\to\Gamma(\T_1\otimes\T_2)$
in the following way: first of all, fix $s^1\in\Gamma(\T_1)$ and $s^2\in\Gamma(\T_2)$.
Choose representatives $\overline\T_i=(T_i,\underline E^i,\pi_i,\nnorm_i)$ and
$\overline s^i:\,\X\to T_i$ for $i=1,2$. Given $n,m\in\N$, $x\in E^1_n\cap E^2_m$
and called $\overline s^1(x)=(x,v)$, $\overline s^2(x)=(x,w)$, we define
\[\overline s(x):=(x,c),\quad\text{ where }
c_k:=v_{\lceil k/m\rceil}\,w_{k-m\lceil k/m\rceil+m}
\text{ for all }k=1,\ldots,nm.\]
Hence the equivalence class $\iota(s^1\otimes s^2)$ of $\overline s$
is a section of $\T_1\otimes\T_2$. Simple computations yield
\[\big|\iota(s^1\otimes s^2)\big|=\sqrt{|s^1|\,|s^2|}=|s^1\otimes s^2|
\quad\mm\text{-a.e.\ on }\X.\]
Therefore $\iota$ can be uniquely extended to the whole $\Gamma(\T_1)\otimes\Gamma(\T_2)$
by linearity and continuity, thus obtaining an $L^0(\mm)$-linear operator
$\iota:\,\Gamma(\T_1)\otimes\Gamma(\T_2)\to\Gamma(\T_1\otimes\T_2)$ that preserves
the pointwise norm. In order to conclude, it only remains to check that such $\iota$
is surjective. Fix $n,m\in\N$ and call $(\sfe_i)_{i=1}^n$, $(\sff_j)_{j=1}^m$ and
$(\sfg_k)_{k=1}^{nm}$ the canonical bases of $\R^n$, $\R^m$ and $\R^{nm}$, respectively.
Denote by $\boldsymbol{\sfe}_i\in\Gamma(\T_1)$, $\boldsymbol{\sff}_j\in\Gamma(\T_2)$
and $\boldsymbol{\sfg}_k\in\Gamma(\T_1\otimes\T_2)$ the associated constant sections.
It is then easy to realise that
\[
\nchi_{E^1_n\cap E^2_m}\cdot\boldsymbol{\sfg}_k=
\iota\big((\nchi_{E^1_n\cap E^2_m}\cdot\boldsymbol{\sfe}_{\lceil k/m\rceil})\otimes
(\nchi_{E^1_n\cap E^2_m}\cdot\boldsymbol{\sff}_{k-m\lceil k/m\rceil+m})\big)
\quad\text{ for every }k=1,\ldots,nm.\]
Hence the set $(\nchi_{E^1_n\cap E^2_m}\cdot\boldsymbol{\sfg}_k)_{k=1}^{nm}$,
which forms a local basis for $\Gamma(\T_1\otimes\T_2)$ on $E^1_n\cap E^2_m$, is contained
in the range of the map $\iota$. This grants that $\iota$ is surjective, as required.
\end{proof}
\subsection{Pullbacks and duals}
Let $\XX=(\X,\sfd_\X,\mm_\X)$ and $\YY=({\rm Y},\sfd_{\rm Y},\mm_{\rm Y})$ be
fixed metric measure spaces.

Let $f:\,\X\to{\rm Y}$ be any map \emph{of bounded compression},
namely a Borel map such that
\begin{equation}\label{eq:bdd_comp}
f_*\mm_\X\leq C\,\mm_{\rm Y}\quad\text{ for some constant }C>0.
\end{equation}
Given any $L^0(\mm_{\rm Y})$-normed $L^0(\mm_{\rm Y})$-module $\mathscr M$,
there is a natural way to define its \emph{pullback module} $f^*\mathscr M$,
which is an $L^0(\mm_\X)$-normed $L^0(\mm_\X)$-module (see \cite{GP17}
or \cite{GR17} for the details).
\begin{remark}{\rm
Since we are dealing with normed modules over $L^0$, so that there is no integrability
requirement on the pointwise norm of the elements of our modules, it is natural
to ask whether the hypothesis \eqref{eq:bdd_comp} on $f$ can be weakened.
Actually, this is the case: one can build the pullback module under the only
assumption that $f_*\mm_\X\ll\mm_{\rm Y}$. Indeed, choose any Borel representative
$A$ of $\big\{\frac{\d f_*\mm_\X}{\d\mm_{\rm Y}}>0\big\}$, where $\frac{\d f_*\mm_\X}{\d\mm_{\rm Y}}$
denotes the Radon-Nikod\'{y}m derivative of $f_*\mm_\X$ with respect to $\mm_{\rm Y}$.
Then define $\mm'_{\rm Y}:=(f_*\mm_\X)\restr A+\mm_{\rm Y}\restr{\X\setminus A}$.
Hence $\mm_{\rm Y}\ll\mm'_{\rm Y}\ll\mm_{\rm Y}$, which grants that
$L^0(\mm'_{\rm Y})=L^0(\mm_{\rm Y})$ and accordingly that $\mathscr M$ is
an $L^0(\mm'_{\rm Y})$-normed $L^0(\mm'_{\rm Y})$-module.
Moreover, it holds that $f_*\mm_\X\leq\mm'_{\rm Y}$, which says that $f$ is a map
of bounded compression when the target $\rm Y$ is endowed with the measure $\mm'_{\rm Y}$,
so that it makes sense to consider $f^*\mathscr M$.
\fr}\end{remark}

We now define what is the pullback of a measurable Banach bundle over $\YY$:
\begin{definition}[Pullback of an MBB]
Let $\T$ be a measurable Banach bundle over $\YY$. Choose a representative
$\overline\T=(T,\underline E,\pi,\nnorm)$ of $\T$.
Let us set
\begin{equation}\label{eq:def_pullback}\begin{split}
\underline E'&:=\big(f^{-1}(E_n)\big)_{n\in\N}\\
\nnorm'(x,v)&:=\nnorm\big(f(x),v\big)
\end{split}\quad\begin{split}
&\text{ and }T',\pi'\text{ accordingly,}\\
&\text{ for every }(x,v)\in T'.
\end{split}\end{equation}
Then we define the \emph{pullback bundle}
$f^*\T$ as the equivalence class of $(T',\underline E',\pi',\nnorm')$,
which turns out to be a measurable Banach bundle over $\XX$.
\end{definition}
\begin{theorem}
Let $\T$ be a measurable Banach bundle over $\YY$. Then
\begin{equation}
f^*\Gamma(\T)=\Gamma(f^*\T).
\end{equation}
\end{theorem}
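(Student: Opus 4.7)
The plan is to construct an explicit isomorphism $\tilde\sigma:\,f^*\Gamma(\T)\to\Gamma(f^*\T)$ of $L^0(\mm_\X)$-normed $L^0(\mm_\X)$-modules, by first defining the natural ``pullback-of-a-section'' map at the level of $\Gamma(\T)$, then extending it via the universal property of the pullback module, and finally using density of simple sections to upgrade injectivity to a full isomorphism.

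First, I would fix representatives $\overline\T=(T,\underline E,\pi,\nnorm)$ of $\T$ and the associated $(T',\underline E',\pi',\nnorm')$ of $f^*\T$ constructed as in \eqref{eq:def_pullback}. Define $\sigma:\,\Gamma(\T)\to\Gamma(f^*\T)$ as follows: given $s\in\Gamma(\T)$ with representative $\overline s:\,{\rm Y}\to T$, on each $E_n$ write $\overline s(y)=\big(y,\overline c_n(y)\big)$ with $\overline c_n:\,E_n\to\R^n$ Borel, and let $\sigma(s)$ be the equivalence class of $\overline s':\,\X\to T'$ given by $\overline s'(x):=\big(x,\overline c_n(f(x))\big)$ for $x\in f^{-1}(E_n)$. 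Using the bounded compression assumption on $f$ and the Carath\'{e}odory character of $\nnorm\restr{E_n\times\R^n}$ (established in the proof of Proposition \ref{prop:Gamma_essentially_surj}), I would verify that $\sigma(s)$ is well-defined modulo $\mm_\X$-a.e.\ equality, that $\sigma$ is both $\R$-linear and ``$L^0(\mm_{\rm Y})$-linear'' in the sense that $\sigma(g\cdot s)=(g\circ f)\cdot\sigma(s)$ for every $g\in L^0(\mm_{\rm Y})$, and crucially that
\[|\sigma(s)|=|s|\circ f\quad\mm_\X\text{-a.e.\ on }\X\quad\text{for every }s\in\Gamma(\T),\]
which is an immediate consequence of the identity $\nnorm'(x,v)=\nnorm(f(x),v)$.

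Next, by the universal property of the pullback module recalled in \cite{GP17}, such a $\sigma$ extends uniquely to an $L^0(\mm_\X)$-linear homomorphism $\tilde\sigma:\,f^*\Gamma(\T)\to\Gamma(f^*\T)$ which still preserves the pointwise norm. Norm preservation immediately yields injectivity, so it remains to prove that $\tilde\sigma$ is surjective. Since the image of $\tilde\sigma$ is $\sfd_{\Gamma(f^*\T)}$-closed (being the isometric image of the complete module $f^*\Gamma(\T)$) and since the simple sections $\sfS(f^*\T)$ are dense in $\Gamma(f^*\T)$ by Lemma \ref{lemma:density_simple_sections}, it is enough to show that every simple section lies in the image. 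By $L^0(\mm_\X)$-linearity and the glueing property, this reduces to sections of the form $\nchi_A\cdot\boldsymbol q$ for some $n\in\N$, a Borel set $A\subseteq f^{-1}(E_n)$ and a vector $q\in\Q^n$; if $\boldsymbol q_{\rm Y}\in\Gamma(\T)$ denotes the corresponding constant section on $E_n$, the very definition of $\sigma$ forces $\sigma(\boldsymbol q_{\rm Y})=\boldsymbol q$, so $\nchi_A\cdot\boldsymbol q$ lies in the image of $\tilde\sigma$, as required.

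The main obstacle I anticipate is the careful verification of well-posedness and norm-preservation of $\sigma$ at the level of representatives: one must check that composing $\nnorm\restr{E_n\times\R^n}$ with $\big(f,\overline c_n\circ f\big)$ produces a Borel function on $f^{-1}(E_n)$ whose equivalence class equals $|\sigma(s)|$, and that the construction is independent of the chosen representatives --- both of which rely on $f_*\mm_\X\ll\mm_{\rm Y}$, granted by the bounded compression hypothesis. Once these measurability issues are handled, the remaining steps are purely formal: the universal property of $f^*\Gamma(\T)$ supplies the extension $\tilde\sigma$, and the density lemma for simple sections bootstraps it to the desired isomorphism.
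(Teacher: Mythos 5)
Your proposal is correct and follows essentially the same route as the paper: both construct the natural section-pullback map $s\mapsto f^*s$ on representatives, verify the pointwise-norm identity $|f^*s|=|s|\circ f$, and use the density of simple sections (Lemma \ref{lemma:density_simple_sections}) together with the fact that constant sections pull back to constant sections to see that the image generates $\Gamma(f^*\T)$, which by the characterising property of the pullback module yields the isomorphism. The only cosmetic difference is that you phrase the conclusion as injectivity plus surjectivity of the induced map $\tilde\sigma$, whereas the paper directly invokes the uniqueness clause in the definition of $f^*\mathscr M$.
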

\begin{proof}
We aim to build a linear map $f^*:\,\Gamma(\T)\to\Gamma(f^*\T)$ such that
\begin{equation}\label{eq:pullbacks_aux}\begin{split}
|f^*s|=|s|\circ f&\quad\text{ for every }s\in\Gamma(\T),\\
\big\{f^*s\,:\,s\in\Gamma(\T)\big\}&\quad\text{ generates }\Gamma(f^*\T).
\end{split}\end{equation}
Pick a representative $\overline\T=(T,\underline E,\pi,\nnorm)$ of $\T$ and
define $(T',\underline E',\pi',\nnorm')$ as in \eqref{eq:def_pullback}.
Take $s\in\Gamma(\T)$, with representative $\overline s:\,{\rm Y}\to T$.
Given any $x\in\X$, let us define $\overline s'(x):=(x,v)$, where $v$ is the
unique vector for which $\overline s\big(f(x)\big)=\big(f(x),v\big)$.
It clearly holds that $\overline s':\,\X\to T'$ is a section of
the MBB $(T',\underline E',\pi',\nnorm')$.
Then we define $f^*s$ as the equivalence class of $\overline s'$.
We thus built a map $f^*:\,\Gamma(\T)\to\Gamma(f^*\T)$,
which is linear and satisfies the first in \eqref{eq:pullbacks_aux} by construction.

Now fix $n\in\N$ and $q\in\Q^n$. Denote by $\boldsymbol q\in\Gamma(\T)$ and
$\boldsymbol q'\in\Gamma(f^*\T)$ the constant sections associated to $q$.
It is then easy to check that $\boldsymbol q'=f^*\boldsymbol q$. This grants that
\[\sfS(f^*\T)\subseteq\left\{\sum_{i\in\N}\nchi_{A_i}\cdot f^*s_i
\;\bigg|\;(A_i)_i\text{ is a Borel partition of }\X\text{, }(s_i)_i\subseteq\Gamma(\T)\right\}.\]
Since $\sfS(f^*\T)$ is dense in $\Gamma(f^*\T)$ by Lemma \ref{lemma:density_simple_sections},
we finally conclude that the second in \eqref{eq:pullbacks_aux} is verified as well.
Therefore the statement is achieved.
\end{proof}
\bigskip

We now introduce the notions of dual module and of dual bundle.\\
Let $\mathscr M$ be an $L^0(\mm)$-normed $L^0(\mm)$-module. We define
the \emph{dual module} $\mathscr M^*$ as the space of all
$L^0(\mm)$-linear continuous maps $T:\,\mathscr M\to L^0(\mm)$,
endowed with the pointwise norm
\begin{equation}
|T|_*:=\underset{\substack{v\in\mathscr M: \\ |v|\leq 1}}{\rm ess\,sup\,}
\big|T(v)\big|\in L^0(\mm)\quad\text{ for every }T\in\mathscr M^*.
\end{equation}
It can be readily proven that $\mathscr M^*$ has a natural structure of $L^0(\mm)$-normed $L^0(\mm)$-module.
\begin{definition}[Dual bundle]
Let $\T$ be a measurable Banach bundle over $\XX$. Choose a representative
$\overline\T=(T,\underline E,\pi,\nnorm)$ of $\T$. Let us set
\begin{equation}
\nnorm^*(x,v):=\left\{\begin{array}{ll}
\sup_{w\in(\overline\T)_x\setminus\{0\}}\frac{|v\cdot w|}{\nnorm(x,w)}\\
0
\end{array}\quad\begin{array}{ll}
\text{ if }\nnorm(x,\cdot)\text{ is a norm,}\\
\text{ otherwise.}
\end{array}\right.
\end{equation}
Then we define the \emph{dual bundle} $\T^*$ as the equivalence class of
$(T,\underline E,\pi,\nnorm^*)$, which turns out to be a measurable Banach
bundle over $\XX$.
\end{definition}
\begin{theorem}
Let $\T$ be a measurable Banach bundle over $\XX$. Then
\begin{equation}
\Gamma(\T)^*=\Gamma(\T^*).
\end{equation}
\end{theorem}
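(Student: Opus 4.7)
The plan is to exhibit an explicit module isomorphism $J:\Gamma(\T^*)\to\Gamma(\T)^*$ which is $L^0(\mm)$-linear, preserves the pointwise norm, and is surjective, by passing through the fiberwise Euclidean pairing and then invoking the density of simple sections from Lemma \ref{lemma:density_simple_sections}.

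Fix a representative $\overline\T=(T,\underline E,\pi,\nnorm)$ of $\T$, so that $\overline\T^*=(T,\underline E,\pi,\nnorm^*)$ is a representative of $\T^*$ sharing the same total space. Given $\sigma\in\Gamma(\T^*)$ and $s\in\Gamma(\T)$, choose representatives $\overline\sigma,\overline s:\X\to T$ and, for $x\in E_n$, write $\overline\sigma(x)=(x,v(x))$ and $\overline s(x)=(x,w(x))$ with $v,w:E_n\to\R^n$ Borel. Then $x\mapsto v(x)\cdot w(x)$ (Euclidean inner product in $\R^n$) is Borel on each $E_n$, hence defines an element $\langle\sigma,s\rangle\in L^0(\mm)$. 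Set $J(\sigma)(s):=\langle\sigma,s\rangle$. It is immediate from the fiberwise construction that $J(\sigma):\Gamma(\T)\to L^0(\mm)$ is $L^0(\mm)$-linear and that the defining property of $\nnorm^*$ yields
\[
\bigl|J(\sigma)(s)\bigr|(x)=|v(x)\cdot w(x)|\le\nnorm^*\bigl(x,v(x)\bigr)\,\nnorm\bigl(x,w(x)\bigr)=|\sigma|(x)\,|s|(x)\quad\mm\text{-a.e.},
\]
so $J(\sigma)\in\Gamma(\T)^*$ with $|J(\sigma)|_*\le|\sigma|$. Linearity of $J$ in $\sigma$ is clear.

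To prove $|J(\sigma)|_*=|\sigma|$, I would exploit the fact that on each $E_n$ the dual norm can be computed along rational directions, i.e.\ $\nnorm^*(x,v(x))=\sup_{q\in\Q^n\setminus\{0\}}|v(x)\cdot q|/\nnorm(x,q)$ for $\mm$-a.e.\ $x\in E_n$. For each such $q$, the normalised constant section $s_q:=\nchi_{E_n}\cdot(|\boldsymbol q|)^{-1}\cdot\boldsymbol q$ (defined on the full-measure set $\{|\boldsymbol q|>0\}$) satisfies $|s_q|\le 1$ and $J(\sigma)(s_q)(x)=v(x)\cdot q/\nnorm(x,q)$. Taking the essential supremum over the countable collection $\{s_q\}_{q\in\Q^n\setminus\{0\}}$ recovers $|\sigma|$ $\mm$-a.e.\ on $E_n$, proving the reverse inequality.

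For surjectivity, let $T\in\Gamma(\T)^*$ and, for each $n\in\N$ and $i=1,\dots,n$, choose a Borel representative $\overline T_i^n:\X\to\R$ of $T(\boldsymbol{\sf e}_i)\in L^0(\mm)$, where $\boldsymbol{\sf e}_1,\dots,\boldsymbol{\sf e}_n$ are the constant sections from Remark \ref{rmk:constant_sections}. Define a measurable section $\overline\sigma:\X\to T$ by $\overline\sigma(x):=(x,(\overline T_1^n(x),\dots,\overline T_n^n(x)))$ for $x\in E_n$; since all norms on $\R^n$ are equivalent, the resulting pointwise dual norm $\nnorm^*(x,\overline\sigma(x))$ is automatically finite, so its equivalence class $\sigma$ lies in $\Gamma(\T^*)$. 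By construction $J(\sigma)(\boldsymbol{\sf e}_i)=T(\boldsymbol{\sf e}_i)$ on $E_n$, hence by $L^0(\mm)$-linearity $J(\sigma)$ and $T$ agree on every simple section $t\in\sfS(\T)$. The density of $\sfS(\T)$ in $\Gamma(\T)$ (Lemma \ref{lemma:density_simple_sections}) together with the continuity of both $J(\sigma)$ and $T$ (which is automatic, since any module morphism into $L^0(\mm)$ is continuous in view of the pointwise bound by $|\cdot|_*$) then forces $J(\sigma)=T$.

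The main obstacle is the norm-preservation step: one must verify carefully that the essential supremum in the definition of $|\cdot|_*$ can be realised along countably many normalised constant sections, so that the pointwise dual norm on the bundle side matches the module-theoretic dual norm. The checks that $J$ is $L^0(\mm)$-linear and the surjectivity argument are routine once the pairing is set up, since everything is dictated fiberwise by the Euclidean inner product and the finite-dimensionality of the fibers guarantees that $\nnorm^*$ is a genuine (finite) norm.
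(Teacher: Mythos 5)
Your proposal is correct and follows essentially the same route as the paper: the same fiberwise pairing $\sigma\mapsto\bigl(s\mapsto\overline\sigma(\cdot)\cdot\overline s(\cdot)\bigr)$, and the same surjectivity construction assembling a section of $\T^*$ from the values $T(\boldsymbol{\sf e}^n_i)$ and concluding via density of simple sections. The only difference is that you spell out the norm-preservation step (testing against countably many normalised rational constant sections), which the paper leaves as an easy verification; that is exactly the right justification.
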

\begin{proof}
Consider the operator $\iota:\,\Gamma(\T^*)\to\Gamma(\T)^*$ defined as follows:
given any $s^*\in\Gamma(\T^*)$, we call $\iota(s^*):\,\Gamma(\T)\to L^0(\mm)$ the map
sending (the equivalence class of) any section $\overline s$ to the function $\X\ni x\mapsto\overline s^*(x)\cdot\overline s(x)\in\R$,
where $\overline s^*$ is any representative of $s^*$. One can easily deduce
from its very construction that $\iota$ is a module morphism that
preserves the pointwise norm. To conclude, it only remains to show that the map $\iota$ is surjective.
Let $T\in\Gamma(\T)^*$ be fixed. For any $n\in\N$, denote by $\sfe^n_1,\ldots,\sfe^n_n$
the canonical basis of $\R^n$ and by $\boldsymbol{\sfe}^n_1,\ldots,\boldsymbol{\sfe}^n_n\in\Gamma(\T)$ the associated constant sections. Hence let us define
$s^*\in\Gamma(\T^*)$ as
\[s^*(x):=\sum_{n\in\N}
\Big(x,\big(T\boldsymbol{\sfe}^n_1(x),\ldots,T\boldsymbol{\sfe}^n_n(x)\big)\Big)
\quad\text{ for }\mm\text{-a.e.\ }x\in\X.\]
Simple computations show that $\iota(s^*)=T$. Hence $\iota$ is surjective,
concluding the proof.
\end{proof}
\appendix
\section{Comparison with the Serre-Swan theorem for smooth manifolds}\label{ap:comparison}
We point out the main analogies and differences between our work and
the Serre-Swan theorem for smooth manifolds, for whose
presentation we refer to \cite[Chapter 11]{nestruev2003smooth}.

The result in the smooth case can be informally stated as follows:
\emph{the category of smooth vector bundles over a connected manifold
$M$ is equivalent to the category of finitely-generated projective $C^\infty(M)$-modules.}

In our non-smooth setting we had to replace `smooth' with `measurable',
in a sense, and this led to these discrepancies with the case of manifolds: 
\begin{itemize}
\item[$\rm i)$] The fibers of a measurable Banach bundle need not have
the same dimension (still, they are finite dimensional), while on
a connected manifold any smooth vector bundle must have constant
dimension by topological reasons.
\item[$\rm ii)$] In the definition of measurable Banach bundle
we do not speak about the analogous of the `trivialising diffeomorphisms',
the reason being that one can always patch together countably many measurable
maps still obtaining a measurable map. Hence there is no loss of generality in requiring
the total space of the bundle to be of the form $\bigsqcup_{n\in\N}E_n\times\R^n$ and
its measurable subsets to be those sets whose intersection with each $E_n\times\R^n$ is a Borel set.
\item[$\rm iii)$] Given that we want to correlate the measurable Banach bundles
with the $L^0(\mm)$-normed $L^0(\mm)$-modules, which are naturally equipped with
a pointwise norm $|\cdot|$, we also require the existence of a function $\nnorm$
that assigns a norm to (almost) every fiber of our bundle. A similar structure is
not treated in the smooth case.
\item[$\rm iv)$]
The Serre-Swan theorem for smooth manifolds deals with modules that are
finitely-generated and projective. In our context, any finitely-generated module
is automatically projective, as seen in Proposition \ref{prop:fin-gen_are_proj}.
Moreover, the flexibility of $L^0(\mm)$ actually allowed us to extend the result to all proper modules,
that are not necessarily `globally' finitely-generated but only
`locally' finitely-generated, in a sense.
\end{itemize}
\section{A variant for \texorpdfstring{$L^p$}{Lp}-normed \texorpdfstring{$L^\infty$}{Linfty}-modules}\label{ap:variant}
The original presentation of the concept of $L^0$-normed $L^0$-module, which has
been proposed in \cite{Gigli14}, follows a different line of thought with respect
to the one presented here. In \cite{Gigli14} it is first given the notion
of \emph{$L^p$-normed $L^\infty$-module}, then by suitably completing such objects
one obtains the class of $L^0$-normed $L^0$-modules. The role of this completion
is to `remove any integrability requirement'. On the other hand, the axiomatisation
of $L^0$-normed $L^0$-modules that we presented in Subsection \ref{subsec:L0_norm_L0_mod}
is taken from \cite{Gigli17}.

Our choice of using the language of $L^0$-normed $L^0$-modules, instead of
$L^p$-normed $L^\infty$-modules, is only a matter of practicity and
is not due to any theoretical reason.
Indeed, in this appendix we show that all the results we obtained so far
can be suitably reformulated for $L^p$-normed $L^\infty$-modules.
\bigskip

Let $\XX=(\X,\sfd,\mm)$ be a given metric measure space. Fix an exponent $p\in[1,\infty]$.
In order to keep a distinguished notation, we shall indicate by $\mathscr M^p$ the
$L^p(\mm)$-normed $L^\infty(\mm)$-modules, for whose definition and properties we
refer to \cite{Gigli14} or \cite{Gigli17}, while the $L^0(\mm)$-normed $L^0(\mm)$-modules
will be denoted by $\mathscr M^0$. The category of $L^p(\mm)$-normed $L^\infty(\mm)$-modules
is denoted by $\mathbf{NMod}^p(\XX)$ and that of $L^0(\mm)$-normed $L^0(\mm)$-modules
by $\mathbf{NMod}^0(\XX)$. Moreover, the subcategories of $\mathbf{NMod}^p(\XX)$
and $\mathbf{NMod}^0(\XX)$ that consist of all proper modules will be called
$\mathbf{NMod}^p_{\rm pr}(\XX)$ and $\mathbf{NMod}^0_{\rm pr}(\XX)$, respectively.
Observe that we added the exponent $0$ to the notation of Definition \ref{def:NMod_pr}.
Similarly, we shall denote by $\Gamma_0$ the section functor $\Gamma$ that has been
introduced in Definition \ref{def:section_functor}.

The following results look upon the relation that subsists between the class of
$L^p(\mm)$-normed $L^\infty(\mm)$-modules and that of $L^0(\mm)$-normed $L^0(\mm)$-modules.
First of all, it has been proved in \cite[Theorem/Definition 1.7]{Gigli17} that
\begin{theorem}[$L^0$-completion]
Let $\mathscr M^p$ be an $L^p(\mm)$-normed $L^\infty(\mm)$-module. Then there
exists a unique couple $(\mathscr M^0,\iota)$, called \emph{$L^0$-completion} of $\mathscr M^p$,
where $\mathscr M^0$ is an $L^0(\mm)$-normed $L^0(\mm)$-module and $\iota:\,\mathscr M^p\to\mathscr M^0$
is a linear map with dense image that preserves the pointwise norm. Uniqueness has to
be intended up to unique isomorphism.
\end{theorem}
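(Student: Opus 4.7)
My plan is to build $\mathscr M^0$ as the completion of $\mathscr M^p$ with respect to an $L^0$-style distance and then to extend all the operations by continuity, arguing uniqueness by the standard density principle. Concretely, I would fix a Borel probability measure $\mm'$ on $\X$ with $\mm\ll\mm'\ll\mm$ and put on $\mathscr M^p$ the translation-invariant distance $\sfd_0(v,w):=\int|v-w|\wedge 1\,\d\mm'$, where $|\cdot|$ is the pointwise norm of the given $L^p(\mm)$-normed $L^\infty(\mm)$-module. I would then let $(\mathscr M^0,\sfd_0)$ be the abstract metric completion and $\iota:\,\mathscr M^p\to\mathscr M^0$ the canonical isometric embedding with dense image.

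The next step is to extend the algebraic structure. Addition and $\R$-scalar multiplication are $\sfd_0$-uniformly continuous, so they extend canonically. The pointwise norm is $1$-Lipschitz because of the reverse triangle inequality $\big||v|-|w|\big|\leq|v-w|$, hence it extends continuously to $|\cdot|:\,\mathscr M^0\to L^0(\mm)$. The core technical step is to upgrade the $L^\infty(\mm)$-action to an $L^0(\mm)$-action: for $v\in\mathscr M^p$ and $f\in L^0(\mm)$ I would truncate $f_n:=(-n)\vee f\wedge n\in L^\infty(\mm)$, observe that $|f_n\cdot v-f_m\cdot v|=|f_n-f_m|\,|v|$ tends to $0$ in $\mm$-measure since $f_n\to f$ in $L^0(\mm)$ and $|v|$ is $\mm$-a.e.\ finite, and conclude that $(f_n\cdot v)_n$ is $\sfd_0$-Cauchy; its limit would be my definition of $f\cdot v$. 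For a generic $v\in\mathscr M^0$, I would write $v=\lim_k\iota(v_k)$ with $v_k\in\mathscr M^p$ and set $f\cdot v:=\lim_k f\cdot v_k$, whose existence again follows from the fact that multiplication by an $\mm$-a.e.\ finite function preserves convergence in measure. The module axioms and the identity $|f\cdot v|=|f|\,|v|$ are first inherited from $\mathscr M^p$ for $L^\infty$-coefficients and then propagated to $\mathscr M^0$ with $L^0$-coefficients by joint continuity.

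For uniqueness, suppose $(\mathscr N^0,j)$ is another $L^0$-completion. The prescription $\Phi(\iota(v)):=j(v)$ defines a pointwise-norm-preserving $L^\infty(\mm)$-linear bijection between the dense subsets $\iota(\mathscr M^p)$ and $j(\mathscr M^p)$; by the universal property of the metric completion it extends uniquely to a $\sfd_0$-isometric bijection $\Phi:\,\mathscr M^0\to\mathscr N^0$, and the continuity of the $L^0$-action and of the pointwise norm built in the previous paragraph forces $\Phi$ to be an isomorphism of $L^0(\mm)$-normed $L^0(\mm)$-modules, unique by density. The main obstacle I anticipate is precisely the extension of the $L^\infty$-scalar multiplication to an $L^0$-scalar multiplication: one cannot invoke a Banach-space completion lemma since no global integrability is retained after completion, and the argument rests on the elementary but subtle fact that multiplication $L^0(\mm)\times L^0(\mm)\to L^0(\mm)$ is jointly sequentially continuous when one factor is $\mm$-a.e.\ finite.
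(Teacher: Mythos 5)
The paper does not prove this statement: it is quoted verbatim from \cite[Theorem/Definition 1.7]{Gigli17}, so there is no in-paper proof to compare against. Your construction -- metric completion with respect to $\sfd_0(v,w)=\int|v-w|\wedge 1\,\d\mm'$, continuous extension of the sum and of the pointwise norm, upgrade of the $L^\infty(\mm)$-action to an $L^0(\mm)$-action by truncation $f_n:=(-n)\vee f\wedge n$ together with the sequential continuity of multiplication in measure, and uniqueness by density -- is correct and is essentially the argument of the cited reference.
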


It can be easily seen that the local dimension of a module is invariant under
taking the $L^0$-completion, namely for any Borel set $E\subseteq\X$
with $\mm(E)>0$ and for any $n\in\N$ it holds
\begin{equation}\label{eq:invariant_dim_under_completion}
\mathscr M^p\text{ has dimension }n\text{ on }E
\quad\Longleftrightarrow\quad
\mathscr M^0\text{ has dimension }n\text{ on }E.
\end{equation}
Given two $L^p(\mm)$-normed $L^\infty(\mm)$-modules $\mathscr M^p$, $\mathscr N^p$ and
a module morphism $\Phi:\,\mathscr M^p\to\mathscr N^p$, there exists a unique module
morphism $\widetilde\Phi:\,\mathscr M^0\to\mathscr N^0$ extending $\Phi$, where
$\mathscr M^0$ and $\mathscr N^0$ denote the $L^0$-completions of $\mathscr M^p$
and $\mathscr N^p$, respectively.
\begin{definition}[$L^0$-completion functor]
We define the \emph{$L^0$-completion functor} as the functor $\sfC^p:\,\mathbf{NMod}^p(\XX)\to\mathbf{NMod}^0(\XX)$
that assigns to any $\mathscr M^p$ its $L^0$-completion $\mathscr M^0$
and to any module morphism $\Phi:\,\mathscr M^p\to\mathscr N^p$ its
unique extension $\widetilde\Phi:\,\mathscr M^0\to\mathscr N^0$.
\end{definition}

Conversely, given any $L^0(\mm)$-normed $L^0(\mm)$-module $\mathscr M^0$, one has that
\begin{equation}\label{eq:restr_L0_mod}
\mathscr M^p:=\big\{v\in\mathscr M^0\;\big|\;|v|\in L^p(\mm)\big\}
\quad\text{ has a structure of }L^p(\mm)\text{-normed }L^\infty(\mm)\text{-module.}
\end{equation}
Moreover, it holds that the $L^0$-completion of $\mathscr M^p$ is the original module $\mathscr M^0$.
\begin{definition}[$L^p$-restriction functor]
The \emph{$L^p$-restriction functor} is defined as that functor
$\sfR^p:\,\mathbf{NMod}^0(\XX)\to\mathbf{NMod}^p(\XX)$
that assigns to any $\mathscr M^0$ its `restriction' $\mathscr M^p$, as in \eqref{eq:restr_L0_mod},
and to any module morphism $\widetilde\Phi:\,\mathscr M^0\to\mathscr N^0$ its
restriction $\Phi:=\widetilde\Phi\restr{\mathscr M^p}:\,\mathscr M^p\to\mathscr N^p$,
which turns out to be a morphism of $L^p(\mm)$-normed $L^\infty(\mm)$-modules.
\end{definition}

We can finally collect all of the properties described so far in the following statement:
\begin{theorem}[$\mathbf{NMod}^p(\XX)$ is equivalent to $\mathbf{NMod}^0(\XX)$]
Both the functors $\sfC^p$ and $\sfR^p$ are equivalence of categories, one the inverse of the other.
\end{theorem}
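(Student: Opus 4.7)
The plan is to exhibit natural isomorphisms $\epsilon:\,\sfC^p\circ\sfR^p\Rightarrow\mathrm{Id}_{\mathbf{NMod}^0(\XX)}$ and $\eta:\,\mathrm{Id}_{\mathbf{NMod}^p(\XX)}\Rightarrow\sfR^p\circ\sfC^p$, which together realise $\sfC^p$ and $\sfR^p$ as inverse equivalences of categories. The main leverage point is the uniqueness (up to unique isomorphism) of the $L^0$-completion, which automatically promotes object-level isomorphisms into natural transformations and makes the triangle identities essentially formal.

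To build $\epsilon$, I would fix $\mathscr M^0$ and verify that the inclusion $j:\,\sfR^p(\mathscr M^0)\hookrightarrow\mathscr M^0$ itself realises the $L^0$-completion of $\sfR^p(\mathscr M^0)$. Since $j$ is linear and preserves the pointwise norm by the very definition of $\sfR^p$, everything reduces to showing that $\sfR^p(\mathscr M^0)$ is $L^0$-dense in $\mathscr M^0$. Given $v\in\mathscr M^0$, I would exhaust $\X$ by an increasing sequence $(B_k)$ of Borel sets of finite $\mm$-measure (available since $\mm$ is a $\sigma$-finite Radon measure on a separable space) and set $v_k:=\nchi_{A_k}\cdot v$ with $A_k:=B_k\cap\{|v|\leq k\}$. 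Then $|v_k|\leq k\,\nchi_{B_k}\in L^p(\mm)$, so $v_k\in\sfR^p(\mathscr M^0)$, while $|v-v_k|=\nchi_{\X\setminus A_k}|v|\to 0$ in measure because $A_k\uparrow\X$ up to an $\mm$-negligible set. The universal property of the $L^0$-completion then delivers a unique module isomorphism $\epsilon_{\mathscr M^0}:\,\sfC^p(\sfR^p(\mathscr M^0))\to\mathscr M^0$, and naturality in $\mathscr M^0$ is forced by the uniqueness clause in the completion.

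To build $\eta$, I would fix $\mathscr M^p$ with completion $\iota:\,\mathscr M^p\to\mathscr M^0$ and show that $\iota$ identifies $\mathscr M^p$ with $\sfR^p(\mathscr M^0)$. Injectivity and the inclusion $\iota(\mathscr M^p)\subseteq\sfR^p(\mathscr M^0)$ are immediate from pointwise-norm preservation. For surjectivity, given $v\in\mathscr M^0$ with $|v|\in L^p(\mm)$, density of $\iota(\mathscr M^p)$ produces $(w_n)\subseteq\mathscr M^p$ with $\iota(w_n)\to v$ in $L^0$. I would then replace each $w_n$ by its truncation $\tilde w_n:=\nchi_{\{|w_n|\leq 2|v|\}}\cdot w_n\in\mathscr M^p$, which is well-defined because the indicator lies in $L^\infty(\mm)$. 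Splitting $|\iota(\tilde w_n)-v|$ according to the set $\{|w_n|\leq 2|v|\}$ and its complement, and using that on the complement one has $|\iota(w_n)-v|\geq|w_n|-|v|>|v|$, I would check that $\iota(\tilde w_n)\to v$ in $L^0$ as well. The uniform bound $|\tilde w_n|\leq 2|v|\in L^p(\mm)$ then upgrades convergence in measure of $|\tilde w_n-\tilde w_m|$ to convergence in $L^p(\mm)$ via dominated convergence, so $(\tilde w_n)$ is Cauchy in the complete module $\mathscr M^p$, with limit $w$ satisfying $\iota(w)=v$ by continuity of $\iota$.

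The main obstacle is the truncation-plus-dominated-convergence step in the construction of $\eta$: the truncation is performed in $\mathscr M^0$ against the envelope $|v|$, but lands in $\mathscr M^p$ through its $L^\infty$-module structure, and one must verify that this surgery does not destroy the approximation property. Once $\epsilon$ and $\eta$ are in hand, naturality in morphisms is formal, since any morphism in $\mathbf{NMod}^0(\XX)$ restricts to one in $\mathbf{NMod}^p(\XX)$ by construction of $\sfR^p$, any morphism of $L^p$-normed $L^\infty$-modules extends uniquely to the $L^0$-completion by construction of $\sfC^p$, and these two operations are inverse at the morphism level by the uniqueness of extensions. The triangle identities follow directly from how $\epsilon_{\mathscr M^0}$ and $\eta_{\mathscr M^p}$ are characterised via the universal property, concluding that $\sfC^p$ and $\sfR^p$ are mutually inverse equivalences.
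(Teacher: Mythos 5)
Your overall strategy is sound, and it actually supplies a proof where the paper gives none: the paper merely ``collects'' the uniqueness clause of the $L^0$-completion and the unproved assertions surrounding \eqref{eq:restr_L0_mod}, whereas you make explicit the two natural isomorphisms that constitute the equivalence. The $\epsilon$ half is correct as written: the truncations $\nchi_{B_k\cap\{|v|\leq k\}}\cdot v$ (using $\sigma$-finiteness of the Radon measure $\mm$ on the separable space $\X$) do show that $\sfR^p(\mathscr M^0)$ is $L^0$-dense in $\mathscr M^0$, so the inclusion realises the $L^0$-completion and the uniqueness clause produces the isomorphism and its naturality. The identification of injectivity and of $\iota(\mathscr M^p)\subseteq\sfR^p(\sfC^p(\mathscr M^p))$ in the $\eta$ half is likewise fine, as is the pointwise estimate $|\iota(\tilde w_n)-v|\leq|\iota(w_n)-v|$ justifying that the truncated sequence still approximates $v$.

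There is, however, a genuine gap in the last step of the $\eta$ half when $p=\infty$, a case the statement explicitly includes since $p$ ranges over $[1,\infty]$. Upgrading the convergence in measure of $|\tilde w_n-\tilde w_m|$ to convergence of $\big\||\tilde w_n-\tilde w_m|\big\|_{L^p(\mm)}$ via the dominating envelope $4|v|\in L^p(\mm)$ works for $p<\infty$ (by the usual subsequence argument), but fails for $p=\infty$: convergence in measure under a uniform $L^\infty$ bound does not give $L^\infty$ convergence (consider $\nchi_{[0,1/n]}$ on $[0,1]$), so $(\tilde w_n)$ need not be Cauchy in $\mathscr M^\infty$ and the completeness of the module cannot be invoked. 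To repair this case, pass to a subsequence along which $|\iota(\tilde w_n)-v|\to 0$ holds $\mm'$-a.e., use Egorov's theorem to find Borel sets $F_k$ with $\mm'(\X\setminus F_k)<2^{-k}$ on which the convergence is uniform, note that $(\nchi_{F_k}\cdot\tilde w_n)_n$ is then Cauchy in $\mathscr M^\infty$ with limit $u_k$ satisfying $\iota(u_k)=\nchi_{F_k}\cdot v$, and finally glue the $u_k$ along the disjointified sets $F_k\setminus(F_1\cup\ldots\cup F_{k-1})$; the glueing axiom of $L^\infty(\mm)$-normed $L^\infty(\mm)$-modules applies because all the pieces have pointwise norm dominated by $2|v|\in L^\infty(\mm)$, and locality then yields $\iota(w)=v$ for the glued element $w$.
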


Property \eqref{eq:invariant_dim_under_completion} ensures that $\mathscr M^p$
and $\sfC^p(\mathscr M^p)$ have the same dimensional decomposition, thus
in particular the above functors naturally restrict to
$\sfC^p_{\rm pr}:\,\mathbf{NMod}^p_{\rm pr}(\XX)\to\mathbf{NMod}^0_{\rm pr}(\XX)$
and $\sfR^p_{\rm pr}:\,\mathbf{NMod}^0_{\rm pr}(\XX)\to\mathbf{NMod}^p_{\rm pr}(\XX)$. Therefore:
\begin{corollary}[$\mathbf{NMod}^p_{\rm pr}(\XX)$ is equivalent to $\mathbf{NMod}^0_{\rm pr}(\XX)$]\label{cor:equiv_Lp_L0}
The functors $\sfC^p_{\rm pr}$ and $\sfR^p_{\rm pr}$ are equivalence of categories, one the inverse of the other.
\end{corollary}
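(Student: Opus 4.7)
The plan is to verify that the functors $\sfC^p$ and $\sfR^p$, which the preceding theorem establishes as mutually inverse equivalences between $\mathbf{NMod}^p(\XX)$ and $\mathbf{NMod}^0(\XX)$, genuinely descend to the full subcategories of proper modules, and then to inherit the equivalence from the ambient one. No new categorical content has to be produced: this is essentially a bookkeeping corollary.

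First I would record that properness is preserved in both directions. Given any $\mathscr M^p\in\mathbf{NMod}^p(\XX)$, property \eqref{eq:invariant_dim_under_completion} states that for every Borel set $E\subseteq\X$ with $\mm(E)>0$ and every $n\in\N$, the module $\mathscr M^p$ has dimension $n$ on $E$ if and only if $\sfC^p(\mathscr M^p)$ does. Consequently $\mathscr M^p$ and $\sfC^p(\mathscr M^p)$ admit literally the same dimensional decomposition $(E_n)_{n\in\N\cup\{\infty\}}$, so $\mm(E_\infty)=0$ for one if and only if it holds for the other. This shows that $\sfC^p$ sends proper modules to proper modules. Applying the same equivalence to $\mathscr M^0\in\mathbf{NMod}^0(\XX)$ with its restriction $\sfR^p(\mathscr M^0)$, and using that the $L^0$-completion of the restriction is naturally isomorphic to the original $\mathscr M^0$, one concludes symmetrically that $\sfR^p$ preserves properness. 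Thus the restricted functors $\sfC^p_{\rm pr}$ and $\sfR^p_{\rm pr}$ indicated in the paragraph before the statement are well defined.

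Second, I would derive the equivalence of the restricted categories from the ambient one by a purely formal argument. The natural isomorphisms $\sfC^p\circ\sfR^p\cong\Id_{\mathbf{NMod}^0(\XX)}$ and $\sfR^p\circ\sfC^p\cong\Id_{\mathbf{NMod}^p(\XX)}$ provided by the preceding theorem restrict component-wise to natural isomorphisms
\begin{equation*}
\sfC^p_{\rm pr}\circ\sfR^p_{\rm pr}\cong\Id_{\mathbf{NMod}^0_{\rm pr}(\XX)},\qquad
\sfR^p_{\rm pr}\circ\sfC^p_{\rm pr}\cong\Id_{\mathbf{NMod}^p_{\rm pr}(\XX)},
\end{equation*}
because $\mathbf{NMod}^p_{\rm pr}(\XX)$ and $\mathbf{NMod}^0_{\rm pr}(\XX)$ are full subcategories and the isomorphism components at a proper object are themselves morphisms in the subcategory. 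Hence $\sfC^p_{\rm pr}$ and $\sfR^p_{\rm pr}$ are mutually inverse equivalences.

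There is no real obstacle here: the only substantive input is \eqref{eq:invariant_dim_under_completion}, which has already been asserted in the excerpt and is intuitive because the notion of ``having dimension $n$ on $E$'' only sees the algebraic module structure (and is visible via a local basis consisting of vectors whose $L^0$-pointwise norms can be truncated to remain in $L^p$). Thus the only care needed is to write out the correspondence of dimensional decompositions cleanly, after which the corollary is immediate.
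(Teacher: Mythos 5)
Your proposal is correct and follows essentially the same route as the paper, which justifies the corollary precisely by invoking \eqref{eq:invariant_dim_under_completion} to see that $\mathscr M^p$ and $\sfC^p(\mathscr M^p)$ share the same dimensional decomposition, so that the ambient equivalence restricts to the full subcategories of proper modules. Your write-up merely spells out the bookkeeping that the paper leaves implicit.
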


Now fix a measurable Banach bundle $\T$ over $\XX$. Then let us define
\begin{equation}\label{eq:def_Gamma_p(T)}
\Gamma_p(\T):=\big\{s\in\Gamma_0(\T)\;\big|\;|s|\in L^p(\mm)\big\}.
\end{equation}
The space $\Gamma_p(\T)$ can be viewed as an $L^p(\mm)$-normed $L^\infty(\mm)$-module.
Moreover, given any two measurable Banach bundles $\T_1$, $\T_2$ over $\XX$ and a bundle morphism
$\varphi\in{\rm Mor}(\T_1,\T_2)$, let us define
$\Gamma_p(\varphi)\in{\rm Mor}\big(\Gamma_p(\T_1),\Gamma_p(\T_2)\big)$ as
\begin{equation}
\Gamma_p(\varphi):=\Gamma_0(\varphi)\restr{\Gamma_p(\T_1)}:\,\Gamma_p(\T_1)\to\Gamma_p(\T_2).
\end{equation}
Hence such construction induces an $L^p$-section functor
$\Gamma_p:\,\mathbf{MBB}(\XX)\to\mathbf{NMod}^p_{\rm pr}(\XX)$. Then
\begin{equation}\label{eq:diagram_MBB_NModp_NMod0}\begin{tikzcd}
\mathbf{MBB}(\XX) \arrow{r}{\Gamma_0} \arrow[swap]{rd}{\Gamma_p} &
\mathbf{NMod}^0_{\rm pr}(\XX) \arrow{d}{\sfR^p_{\rm pr}} \\
 & \mathbf{NMod}^p_{\rm pr}(\XX)
\end{tikzcd}\end{equation}
is a commutative diagram. We can finally conclude that
\begin{theorem}[Serre-Swan for $L^p$-normed $L^\infty$-modules]\label{thm:Serre-Swan_Lp_Linfty}
It holds that the $L^p$-section functor $\Gamma_p:\,\mathbf{MBB}(\XX)\to\mathbf{NMod}^p_{\rm pr}(\XX)$ on $\XX$
is an equivalence of categories.
\end{theorem}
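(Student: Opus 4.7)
The plan is to deduce Theorem \ref{thm:Serre-Swan_Lp_Linfty} directly from the two equivalences already established in the paper, namely Theorem \ref{thm:Serre-Swan} (equivalence $\Gamma_0:\mathbf{MBB}(\XX)\to\mathbf{NMod}^0_{\rm pr}(\XX)$) and Corollary \ref{cor:equiv_Lp_L0} (equivalence $\sfR^p_{\rm pr}:\mathbf{NMod}^0_{\rm pr}(\XX)\to\mathbf{NMod}^p_{\rm pr}(\XX)$). The key observation is that by the very definition \eqref{eq:def_Gamma_p(T)} of $\Gamma_p(\T)$ and of $\Gamma_p(\varphi)$, the diagram \eqref{eq:diagram_MBB_NModp_NMod0} commutes, i.e.\ $\Gamma_p=\sfR^p_{\rm pr}\circ\Gamma_0$ as functors.

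The first step I would carry out is to verify carefully that $\Gamma_p$ is indeed a well-defined functor from $\mathbf{MBB}(\XX)$ to $\mathbf{NMod}^p_{\rm pr}(\XX)$: one checks that $\Gamma_p(\T)$ is an $L^p(\mm)$-normed $L^\infty(\mm)$-module (this is automatic from \eqref{eq:restr_L0_mod} applied to $\mathscr M^0=\Gamma_0(\T)$), that it is proper with the same dimensional decomposition as $\Gamma_0(\T)$ (which follows from \eqref{eq:invariant_dim_under_completion}, since $\sfC^p_{\rm pr}(\Gamma_p(\T))=\Gamma_0(\T)$), and that $\Gamma_p(\varphi)=\Gamma_0(\varphi)\restr{\Gamma_p(\T_1)}$ actually takes values in $\Gamma_p(\T_2)$ because a module morphism decreases the pointwise norm, preserving the $L^p$-integrability condition.

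Once the commutativity of the diagram is established, the conclusion is a formal categorical fact: the composition of two equivalences of categories is an equivalence of categories. Explicitly, since $\Gamma_0$ is an equivalence, it admits a quasi-inverse, and likewise $\sfR^p_{\rm pr}$ admits as quasi-inverse the functor $\sfC^p_{\rm pr}$ by Corollary \ref{cor:equiv_Lp_L0}. Their composition $\sfR^p_{\rm pr}\circ\Gamma_0=\Gamma_p$ inherits a quasi-inverse, and more concretely one may verify directly that $\Gamma_p$ is essentially surjective and fully faithful by transporting the analogous properties of $\Gamma_0$ across the equivalence $\sfR^p_{\rm pr}$.

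There is no serious obstacle here: the real content has already been absorbed into Theorem \ref{thm:Serre-Swan} and Corollary \ref{cor:equiv_Lp_L0}. The only point requiring mild care is the bookkeeping in step one, namely checking that restricting $\Gamma_0(\varphi)$ to the $L^p$-integrable sections lands inside $\Gamma_p(\T_2)$ and that this restriction preserves compositions and identities, so that $\Gamma_p$ is genuinely a functor making \eqref{eq:diagram_MBB_NModp_NMod0} commute. After that, invoking \cite[Proposition 7.25]{awodey2006category} (or the elementary fact that equivalences compose) immediately yields the thesis.
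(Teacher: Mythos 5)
Your proposal is correct and follows exactly the paper's argument: the paper proves this theorem by citing Theorem \ref{thm:Serre-Swan}, Corollary \ref{cor:equiv_Lp_L0}, and the commutativity of the diagram \eqref{eq:diagram_MBB_NModp_NMod0}, which is precisely your composition $\Gamma_p=\sfR^p_{\rm pr}\circ\Gamma_0$ of two equivalences. The additional bookkeeping you carry out (that $\Gamma_p$ is a well-defined functor into $\mathbf{NMod}^p_{\rm pr}(\XX)$) is the same content the paper disposes of in the paragraph preceding the theorem.
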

\begin{proof}
It follows from Theorem \ref{thm:Serre-Swan}, from Corollary \ref{cor:equiv_Lp_L0}
and from the fact that the diagram in \eqref{eq:diagram_MBB_NModp_NMod0} commutes.
\end{proof}
\bibliographystyle{siam}

\begin{thebibliography}{10}

\bibitem{AmbrosioGigliSavare11-2}
{\sc L.~Ambrosio, N.~Gigli, and G.~Savar{\'e}}, {\em Metric measure spaces with
  {R}iemannian {R}icci curvature bounded from below}, Duke Math. J., 163
  (2014), pp.~1405--1490.

\bibitem{awodey2006category}
{\sc S.~Awodey}, {\em Category Theory}, Oxford Logic Guides, Ebsco Publishing,
  2006.

\bibitem{Bogachev07}
{\sc V.~I. Bogachev}, {\em Measure theory. {V}ol. {I}, {II}}, Springer-Verlag,
  Berlin, 2007.

\bibitem{COHN1966215}
{\sc P.~Cohn}, {\em Some remarks on the invariant basis property}, Topology, 5
  (1966), pp.~215 -- 228.

\bibitem{DPMR16}
{\sc G.~De~Philippis, A.~Marchese, and F.~Rindler}, {\em On a conjecture of
  Cheeger}.
\newblock To appear in ``Measure Theory in Non-Smooth Spaces'' (Nicola Gigli, ed),
De Gruyter, 2016.

\bibitem{Gigli14}
{\sc N.~Gigli}, {\em Nonsmooth differential geometry - an approach tailored for
  spaces with {R}icci curvature bounded from below}.
\newblock Accepted at Mem. Amer. Math. Soc., arXiv:1407.0809, 2014.

\bibitem{Gigli12}
\leavevmode\vrule height 2pt depth -1.6pt width 23pt, {\em On the differential
  structure of metric measure spaces and applications}, Mem. Amer. Math. Soc.,
  236 (2015), pp.~vi+91.
\newblock arXiv:1205.6622.

\bibitem{Gigli17}
\leavevmode\vrule height 2pt depth -1.6pt width 23pt, {\em {L}ecture notes on
  differential calculus on {${\sf RCD}$} spaces}.
\newblock Preprint, arXiv:1703.06829, 2017.

\bibitem{GP16-2}
{\sc N.~Gigli and E.~Pasqualetto}, {\em Behaviour of the reference measure on
  {${\sf RCD}$} spaces under charts}.
\newblock Preprint, arXiv:1607.05188, 2016.

\bibitem{GP16}
\leavevmode\vrule height 2pt depth -1.6pt width 23pt, {\em Equivalence of two
  different notions of tangent bundle on rectifiable metric measure spaces}.
\newblock Preprint, arXiv:1611.09645, 2016.

\bibitem{GP17}
\leavevmode\vrule height 2pt depth -1.6pt width 23pt, {\em On the notion
of parallel transport on {${\sf RCD}$} spaces}.
\newblock Preprint, 2017.

\bibitem{GR17}
{\sc N.~Gigli and C.~Rigoni}, {\em Recognizing the flat torus among {${\sf
  RCD}^*(0,N)$} spaces via the study of the first cohomology group}.
\newblock Preprint, arXiv:1705.04466, 2017.

\bibitem{GUO2011833}
{\sc T.~Guo and G.~Shi}, {\em The algebraic structure of finitely generated
  {$L^0(\mathcal F,K)$}-modules and the Helly theorem in random normed modules}, Journal of
  Mathematical Analysis and Applications, 381 (2011), pp.~833 -- 842.

\bibitem{HLR91}
{\sc R.~Haydon, M.~Levy, and Y.~Raynaud}, {\em Randomly normed spaces}, vol.~41
  of Travaux en Cours [Works in Progress], Hermann, Paris, 1991.

\bibitem{hungerford2003algebra}
{\sc T.~Hungerford}, {\em Algebra}, Graduate Texts in Mathematics, Springer New
  York, 2003.

\bibitem{MK16}
{\sc M.~Kell and A.~Mondino}, {\em On the volume measure of non-smooth spaces
  with Ricci curvature bounded below}.
\newblock Preprint, arXiv:1607.02036, 2016.

\bibitem{Lott-Villani07}
{\sc J.~Lott and C.~Villani}, {\em Weak curvature conditions and functional
  inequalities}, J. Funct. Anal., 245 (2007), pp.~311--333.

\bibitem{MacLane98}
{\sc S.~Mac~Lane}, {\em Categories for the working mathematician}, vol.~5 of
  Graduate Texts in Mathematics, Springer-Verlag, New York, second~ed., 1998.

\bibitem{Mondino-Naber14}
{\sc A.~Mondino and A.~Naber}, {\em Structure {T}heory of {M}etric-{M}easure
  {S}paces with {L}ower {R}icci {C}urvature {B}ounds}.
\newblock Preprint, arXiv:1405.2222, 2014.

\bibitem{nestruev2003smooth}
{\sc J.~Nestruev}, {\em Smooth Manifolds and Observables}, Graduate Texts in
  Mathematics, Springer, 2003.

\bibitem{Sauvageot89}
{\sc J.-L. Sauvageot}, {\em Tangent bimodule and locality for dissipative
  operators on {$C^*$}-algebras}, in Quantum probability and applications, {IV}
  ({R}ome, 1987), vol.~1396 of Lecture Notes in Math., Springer, Berlin, 1989,
  pp.~322--338.

\bibitem{Sauvageot90}
{\sc J.-L. Sauvageot}, {\em Quantum {D}irichlet forms, differential calculus
  and semigroups}, in Quantum probability and applications, {V} ({H}eidelberg,
  1988), vol.~1442 of Lecture Notes in Math., Springer, Berlin, 1990,
  pp.~334--346.

\bibitem{Serre55}
{\sc J.-P. Serre}, {\em Faisceaux algebriques coherents}, Annals of
  Mathematics, 61 (1955), pp.~197--278.

\bibitem{Sturm06I}
{\sc K.-T. Sturm}, {\em On the geometry of metric measure spaces. {I}}, Acta
  Math., 196 (2006), pp.~65--131.

\bibitem{Sturm06II}
\leavevmode\vrule height 2pt depth -1.6pt width 23pt, {\em On the geometry of
  metric measure spaces. {II}}, Acta Math., 196 (2006), pp.~133--177.

\bibitem{Swan62}
{\sc R.~G. Swan}, {\em Vector bundles and projective modules}, Transactions of
  the American Mathematical Society, 105 (1962), pp.~264--277.

\bibitem{Takesaki79}
{\sc M.~Takesaki}, {\em Theory of operator algebras. {I}}, Springer-Verlag, New
  York-Heidelberg, 1979.

\bibitem{Villani2017}
{\sc C.~Villani}, {\em {I}n\'egalit\'es isop\'erim\'etriques dans les espaces
  m\'etriques mesur\'es [d'apr\`es {F}. {C}avalletti \& {A}. {M}ondino]}.
\newblock S\'eminaire {B}ourbaki,
  http://www.bourbaki.ens.fr/TEXTES/1127.pdf.

\bibitem{Villani2016}
\leavevmode\vrule height 2pt depth -1.6pt width 23pt, {\em Synthetic theory of
  ricci curvature bounds}, Japanese Journal of Mathematics, 11 (2016),
  pp.~219--263.

\bibitem{Weaver99}
{\sc N.~Weaver}, {\em Lipschitz algebras}, World Scientific Publishing Co.,
  Inc., River Edge, NJ, 1999.

\bibitem{Weaver01}
\leavevmode\vrule height 2pt depth -1.6pt width 23pt, {\em Lipschitz algebras
  and derivations. {II}. {E}xterior differentiation}, J. Funct. Anal., 178
  (2000), pp.~64--112.

\end{thebibliography}
{\small
\def\cprime{$'$} \def\cprime{$'$}

}

\end{document}